\def \btwn {\text{\rm btwn}}
\def \area {\text{\rm Area}}
\def \length {\text{\rm Length}}
\def \perimeter {\text{\rm Perimeter}}
\def \subgraph {\text{\rm Subgraph}}
\def \Proj {\text{\rm Proj}}
\def \RR {\mathbb R}
\def \ZZ {\mathbb Z}
\def \eps {\varepsilon}
\def \vphi {\varphi}
\def \cF {\mathcal F}
\def \cS {\mathcal S}
\def \cN {\mathcal N}
\newtheorem{theorem}{Theorem}[section]
\newtheorem{definition}[theorem]{Definition}
\newtheorem{lemma}[theorem]{Lemma}
\newtheorem{proposition}[theorem]{Proposition}
\newtheorem{corollary}[theorem]{Corollary}
\newtheorem{remark}[theorem]{Remark}
\def\myffrac#1#2 in #3{\raise 2.6pt\hbox{$#3 #1$}\mkern-1.5mu\raise 0.8pt\hbox{$
		#3/$}\mkern-1.1mu\lower 1.5pt\hbox{$#3 #2$}}
\def\qed{\hfill $\vcenter{\hrule height .3mm
		\hbox {\vrule width .3mm height 2.1mm \kern 2mm \vrule width .3mm
			height 2.1mm} \hrule height .3mm}$ \bigskip}
\def \i {{ \iota }}
\def \cG {\mathcal G}
\begin{document}

\title{Rigidity of Riemannian embeddings \\ of discrete metric spaces}
\author{Matan Eilat and Bo'az Klartag}
\date{}
\maketitle

\begin{abstract} Let $M$ be a complete, connected Riemannian surface and suppose that $\cS \subset M$ is
a  discrete subset. What can we learn about $M$ from the knowledge of all Riemannian distances  between pairs of points
of $\cS$? We prove that if the distances in $\cS$ correspond to the distances in a $2$-dimensional lattice, or more generally in an arbitrary net in $\RR^2$,
then $M$ is  isometric to the Euclidean plane.
We thus find that Riemannian embeddings
of certain discrete metric spaces are rather rigid. A corollary is
that a subset of $\ZZ^3$ that strictly contains $\ZZ^2 \times \{ 0 \}$
cannot be isometrically embedded in any complete Riemannian surface.
\end{abstract}

\section{Introduction}

The collection of distances between pairs of points in a fine net in a Riemannian manifold $M$ provides information
on the geometry of the underlying manifold.
A common theme in the mathematical literature is that
the geometric information on $M$ that one extracts from a discrete net is {\it approximate}.
As the net gets finer, it better approximates the manifold.
Unless one makes substantial assumptions about the manifold $M$, knowledge of all distances in the net
typically implies that various geometric parameters of $M$ can be estimated
to a certain accuracy.

\medskip The question that we address in this paper is slightly different: Is it possible to obtain {\it exact} geometric information on the manifold $M$ from knowledge of
the distances between pairs of points in a discrete subset of $M$?
We show that the answer
is sometimes affirmative.

\medskip Recall that a discrete set $L \subseteq \RR^n$ is a net if there exists $\delta > 0$ such that $d(x,L) < \delta$
for any $x \in \RR^n$. Here, $d(x, L) = \inf_{y \in L} | x -y|$ and $|x| = \sqrt{\sum_i x_i^2}$
for $x \in \RR^n$. For example, any $n$-dimensional lattice in $\RR^n$ is a net.
 We say that $L$ embeds isometrically in a Riemannian manifold $M$ if there exists $\iota: L \rightarrow M$
such that for all $x,y \in L$,
$$ d(\i(x),\i(y)) = |x-y|, $$
where $d$ is the Riemannian distance function in $M$. We prove the following:

\begin{theorem}
Let $M$ be a complete, connected, $2$-dimensional Riemannian manifold. Suppose that there exists
a net in $\RR^2$ that embeds isometrically in $M$.
Then the manifold $M$ is flat and it is isometric to the Euclidean plane.
\label{thm_12170}
\end{theorem}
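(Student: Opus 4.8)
The plan is to show that $M$ is flat, and then to read off, from the classification of complete flat surfaces together with a volume estimate, that $M$ is the Euclidean plane. Fix a net $L\subseteq\RR^2$ with $d(x,L)<\delta$ for all $x\in\RR^2$, together with an isometric embedding $\i\colon L\to M$; replacing $L$ by a maximal $\delta$--separated subset of itself (which is again a net, isometrically embedded via $\i$) we may assume distinct points of $L$ are at distance at least $\delta$. Since $L$ is infinite and contains pairs of points at arbitrarily large mutual distance, $\i(L)$ has infinite diameter, so $M$ is noncompact. Fix $x_0\in L$ and put $p_0=\i(x_0)$. The crucial feature of the hypothesis is that it hands us, for free, a wealth of finite metric configurations inside $M$: for any finite $y_1,\dots,y_k\in L$ the points $\i(y_1),\dots,\i(y_k)$ realise \emph{exactly} the Euclidean distance matrix $(|y_i-y_j|)$, and because $L$ is a $\delta$--net these families can be taken $O(\delta)$--close to any prescribed finite subset of $\RR^2$ --- a long segment, a triangle, a large fine square grid --- at every scale from $\delta$ up to $\infty$.

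\textbf{Step 1: $M$ contains a line.} Choose net points $(a_i)_{i\in\ZZ}$ with $a_0=x_0$ and $|a_i-i\delta e_1|<\delta$, and let $\sigma_n$ be a minimizing geodesic from $\i(a_{-n})$ to $\i(a_n)$ that is closest to $p_0$. From $d(\i(a_{-n}),p_0)+d(p_0,\i(a_n))=|a_{-n}-x_0|+|x_0-a_n|\le|a_{-n}-a_n|+O(\delta)=d(\i(a_{-n}),\i(a_n))+O(\delta)$ we see $p_0$ lies within $O(\delta)$ of $\sigma_n$; since moreover $\length(\sigma_n)=|a_{-n}-a_n|\ge 2n\delta-2\delta$ while $p_0$ is at distance $n\delta+O(\delta)$ from each endpoint, reparametrising $\sigma_n$ by arclength from its point closest to $p_0$ yields unit--speed geodesics $\sigma_n\colon[-L_n,L_n']\to M$ with $L_n,L_n'\to\infty$ and $\sigma_n(0)$ confined to a fixed compact set. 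By Arzel\`a--Ascoli and completeness of $M$, a subsequence converges uniformly on compacta to a unit--speed geodesic $\gamma\colon\RR\to M$, and for $s<t$ one has $d(\gamma(s),\gamma(t))=\lim_n d(\sigma_n(s),\sigma_n(t))=t-s$, so $\gamma$ is minimizing between any two of its points, i.e.\ a line. (The same construction in a transverse direction produces a second line.)

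\textbf{Step 2: at least quadratic volume growth, and the endgame modulo flatness.} For large $R$ the ball $B_{\RR^2}(x_0,R)$ contains at least $cR^2$ points of $L$, whose $\i$--images are $\delta$--separated and lie in $B_M(p_0,R)$. Each such image point is the centre of a genuinely two--dimensional Euclidean configuration of nearby image points (e.g.\ a small ``plus sign'' of side comparable to $\delta$, whose outer vertices satisfy Pythagoras--type relations forcing two transverse directions to emanate from the centre), and this is precisely what lets one bound from below, by a fixed multiple of $\delta^2$, the areas of the disjoint small balls packed around these points --- no a priori control on $M$ is invoked here. Hence $\mathrm{vol}\,B_M(p_0,R)\ge c'R^2$ for all large $R$. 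Granting for the moment that $M$ is flat: a complete connected flat surface is isometric to the Euclidean plane, a flat cylinder, a flat M\"obius band, a flat torus, or a flat Klein bottle; the last two are compact, while the cylinder and the M\"obius band have only linear volume growth. Thus Step~2 singles out the plane, and it remains to prove flatness.

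\textbf{Step 3: $M$ is flat --- the crux.} Suppose $K\not\equiv0$, and pick $p_1$ with $K(p_1)\neq0$ and a small geodesic disc $D\ni p_1$ with $\int_D K\,dA=\kappa$, where $0<|\kappa|<2\pi$ (possible since this integral depends continuously on $D$, tends to $0$, and is nonzero for small $D$). Away from $D$ the metric of $M$ is locally Euclidean, and by Gauss--Bonnet the holonomy around a small loop encircling $D$ is rotation by $\kappa\neq0$, so developing the flat structure of $M$ around $D$ has nontrivial monodromy. On the other hand, by the remark in the first paragraph $\i(L)$ contains, at arbitrarily large scale, an $O(\delta)$--accurate isometric copy of a large fine square grid, \emph{all} of whose pairwise distances are Euclidean; if such a copy surrounds $D$, its globally Euclidean distance structure is incompatible with the nontrivial holonomy, giving a contradiction and hence $K\equiv0$. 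The real obstacle --- which I expect is the technical heart of the paper --- is to guarantee that these grid copies actually reach and encircle the given disc $D$: that is, that $\i(L)$ is coarsely dense in $M$, equivalently that $M$ possesses no end or region left ``unseen'' by the net. This must be extracted by combining the two--dimensionality of $L$ as a metric space (not merely that it is coarsely a copy of $\RR^2$), the volume lower bound of Step~2, and an analysis of the ends of the complete surface $M$. An alternative route to flatness, should one manage to prove $K\ge0$ first: the line produced in Step~1 together with the Cheeger--Gromoll splitting theorem forces $M$ to split isometrically as $\RR\times N$ with $N$ a complete $1$--manifold and $K\equiv0$, after which Step~2 again selects the Euclidean plane.
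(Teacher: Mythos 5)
Your skeleton --- establish flatness, then classify complete flat surfaces and use volume growth to single out the plane --- is a perfectly reasonable plan. Step~1 (extracting a line in $M$ as a limit of minimizing segments between faraway, nearly collinear net points) is sound in spirit, modulo the usual care needed to show that near-equality in the triangle inequality pins $p_0$ near $\sigma_n$. The endgame granting flatness is also correct: a complete connected flat surface is a plane, cylinder, M\"obius band, torus, or Klein bottle, and only the plane can host an isometric net of $\RR^2$. But there are two genuine problems.

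First, Step~2's claim that a ``plus-sign'' of five net points forces the area of a ball of radius $\sim\delta$ around $\i(p)$ to be $\gtrsim\delta^2$ ``with no a priori control on $M$'' is not proved and should not be asserted. Without any control on curvature or injectivity radius near $\i(p)$, five prescribed pairwise distances among five points do not obviously rule out a metric that concentrates almost no area near $\i(p)$; this is exactly the sort of local-to-global statement that requires an argument. In your own proof the gap is harmless, because once flatness is granted the classification already disposes of the compact and linear-growth cases and the area lower bound becomes trivial, so you could simply drop the claim.

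Second, and decisively, Step~3 has no proof, as you yourself flag. The gap you name --- that the embedded grids must reach and encircle any chosen curvature disc $D$, i.e.\ that $\i(L)$ is coarsely dense in $M$ --- is real. But even granting coarse density, the key assertion that a large fine Euclidean grid encircling $D$ is incompatible with nontrivial holonomy around $D$ is itself a nontrivial rigidity claim: the hypothesis only controls distances \emph{between} the net points, and one must show that this distance data detects the holonomy of the region they surround. You also implicitly use that $M$ is simply connected (to speak of ``encircling'') --- in the paper this is a theorem (Theorem~\ref{thm_1217_0}), not a hypothesis. The alternative you sketch (prove $K\ge0$, then apply Cheeger--Gromoll to the line of Step~1) is not obviously any easier than proving $K\equiv0$ directly.

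For contrast, the paper's route to flatness is entirely different. For each direction $v\in S^1$ it constructs an ideal-boundary Busemann-type function $B_v$ taking the value $\langle p,v\rangle$ at each net point $p$, shows it foliates $M$ by complete minimizing geodesics, and proves after substantial work (Proposition~\ref{prop_313}) that the class $\partial_vM$ is a singleton. Combined with Lemma~\ref{lem_155} this shows every geodesic in $M$ is a transport line of some $B_v$, hence globally minimizing, hence $M$ has no conjugate points. Then Proposition~\ref{lem_445} and a Santal\'o--Crofton argument in the Appendix give the precise asymptotic $\area(D_M(0,r))/r^2\to\pi$, and flatness follows from the equality case of the Bangert--Emmerich area-growth theorem (Theorem~\ref{thm_BE}): a complete surface without conjugate points diffeomorphic to $\RR^2$ with $\liminf_r\area(D_M(x,r))/(\pi r^2)\le1$ is flat. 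So the ``technical heart'' you anticipated is not a Gauss--Bonnet holonomy argument around small discs, but a global argument through the absence of conjugate points and an exact asymptotic area identity.
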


The conclusion of Theorem \ref{thm_12170} does not hold if we merely assume that $M$ is a Finsler manifold rather than a Riemannian manifold. Indeed, we may modify the Euclidean metric on $\RR^2$ in a disc that is disjoint from the net $L \subseteq \RR^2$,
and obtain a Finsler metric that induces the same distances among points in the complement of the disc. This was proven
by  Burago and Ivanov \cite{BI2}.
Theorem \ref{thm_12170} allows us to conclude that certain discrete metric spaces
embed in $3$-dimensional Riemannian manifolds but not in $2$-dimensional ones:

\begin{corollary} Let $X \subseteq \RR^3$ be a discrete set
that is not contained in any
affine plane, yet there exists an affine plane $H \subset \RR^3$ such that $X \cap H$ is a net in $H$. Endow $X$ with the Euclidean metric. Then
$X$ does not embed isometrically in any $2$-dimensional, complete Riemannian manifold.
\label{cor_344}
\end{corollary}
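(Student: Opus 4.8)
\medskip
The plan is to deduce the corollary quickly from Theorem~\ref{thm_12170}, so that the only real work is an elementary rigidity argument for isometric maps of nets into the Euclidean plane. Suppose, towards a contradiction, that $\iota\colon X\to M$ is an isometric embedding of $X$, with its Euclidean metric, into a $2$-dimensional complete Riemannian manifold $M$. First I would reduce to the case that $M$ is connected: points lying in distinct connected components of $M$ are at infinite Riemannian distance, whereas all distances in $X$ are finite, so $\iota(X)$ is contained in a single connected component $M_0$ of $M$; since components of a manifold are open and closed, $M_0$ is a complete, connected, $2$-dimensional Riemannian manifold whose Riemannian distances coincide with those of $M$. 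Now $X\cap H$ is a net in the plane $H$, and $\iota$ restricts to an isometric embedding of this net into $M_0$; hence Theorem~\ref{thm_12170} forces $M_0$ to be isometric to the Euclidean plane. Composing $\iota$ with such an isometry, we are reduced to proving the following: if $L:=X\cap H$ is a net in $H\cong\RR^2$ and $X$ is not contained in $H$, then $X$ does not embed isometrically into the Euclidean plane $\RR^2$.

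\medskip
Next I would pin down the embedding on $L$. Fix Euclidean coordinates on $\RR^3$ with $H=\RR^2\times\{0\}$, and identify $H$ with $\RR^2$. As a net, $L$ contains three affinely independent points; their $\iota$-images have the same mutual distances, so there is a (unique) rigid motion $\Phi$ of $\RR^2$ carrying the first triple to the second. For any $p\in L$, the point $\iota(p)$ has the same distances to those three image points as $\Phi(p)$ does, and a point of $\RR^2$ is determined by its distances to three affinely independent points; therefore $\iota(p)=\Phi(p)$. After replacing $\iota$ by $\Phi^{-1}\circ\iota$, we may assume that $\iota$ restricts to the inclusion $L\hookrightarrow\RR^2$.

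\medskip
Finally I would exploit a point off the plane. Since $X\not\subseteq H$, pick $q=(q',q_3)\in X$ with $q'\in\RR^2$ and $q_3\neq 0$, and write $w:=\iota(q)\in\RR^2$. Comparing distances in $\RR^3$ with distances in $\RR^2$, for every $p\in L$ one obtains
\[
|w-p|^2=|q'-p|^2+q_3^2 ,
\]
which after expanding and cancelling $|p|^2$ says that the linear functional $p\mapsto 2\langle w-q',p\rangle$ is constant, equal to $|w|^2-|q'|^2-q_3^2$, on $L$. Because $L$ is a net, the functional $p\mapsto\langle u,p\rangle$ is unbounded on $L$ for every nonzero $u\in\RR^2$; hence $w=q'$. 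Substituting this back into the displayed identity yields $q_3^2=0$, contradicting $q_3\neq 0$. This contradiction finishes the proof.

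\medskip
I do not expect a genuine obstacle here: all of the depth is carried by Theorem~\ref{thm_12170}, and the remainder is routine. The two points that must not be skipped are the reduction to a connected $M$ in the first step, which is what makes Theorem~\ref{thm_12170} applicable, and the observation that an isometric map of a net of $\RR^2$ into $\RR^2$ is the restriction of a rigid motion, which is what makes the concluding coordinate computation go through.
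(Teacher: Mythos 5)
Your proof is correct, and the first half (pass to the connected component containing $\iota(X)$, then invoke Theorem~\ref{thm_12170} to force that component to be isometric to $\RR^2$) is exactly what the paper does. Where you diverge is in extracting the final contradiction. The paper's route is shorter: once one knows $\iota(X)\subseteq\RR^2$, it picks four points $x_1,\dots,x_4\in X$ whose affine span in $\RR^3$ is three-dimensional and observes that the $4\times 4$ matrix $\left(\frac{-d_{ij}^2+d_{i4}^2+d_{j4}^2}{2}\right)_{i,j}$ is the Gram matrix of $x_i-x_4$, hence of rank $3$ in $\RR^3$ but at most $2$ for any four points of $\RR^2$; no further use of the net structure is needed. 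Your route instead normalizes the embedding: using three affinely independent points of $L$ you show $\iota|_L$ is the restriction of a rigid motion, reduce to $\iota|_L=\mathrm{id}$, and then derive a linear identity $2\langle w-q',p\rangle=|w|^2-|q'|^2-q_3^2$ on $L$ from an off-plane point $q$; unboundedness of nonzero linear functionals on a net forces $w=q'$ and $q_3=0$. Both arguments are sound. Your version is a bit longer and relies again on $L$ being a net, while the paper's Gram-matrix argument is self-contained once $\iota(X)\subseteq\RR^2$ is established and applies to any $X$ with four points of full affine span; on the other hand, your version has the mild extra payoff of pinning down $\iota|_L$ as the restriction of a rigid motion, which is a useful rigidity statement in its own right.
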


In view of Corollary \ref{cor_344} we define the {\it asymptotic Riemannian dimension}
 of a metric space as the minimal
dimension of a complete Riemannian manifold in which it embeds isometrically.
(It is undefined if there is no such Riemannian manifold).
For example, Corollary \ref{cor_344} tells
us that the asymptotic Riemannian dimension of the metric space
$$ X = \left( \ZZ^2 \times \{ 0 \} \right) \cup \{ (0,0,1) \} \subseteq \RR^3 $$
is exactly $3$. It seems to us that the asymptotic Riemannian dimension captures the large-scale geometry of the metric space,
hence the word asymptotic. In contrast, in the case of a finite  metric space, any reasonable definition of Riemannian dimension
should impose topological constraints on the manifold, since any finite, non-branching metric space may be isometrically embedded
in a two-dimensional surface of a sufficiently high genus.
We are not yet sure whether the $n$-dimensional analog of Theorem \ref{thm_12170} holds true.
The following result is valid in any dimension:

\begin{theorem}
Let  $M$ be a complete, connected, $n$-dimensional Riemannian manifold.
Suppose that there exists
a net in $\RR^n$ that embeds isometrically in $M$.
Then $M$ is diffeomorphic to $\RR^n$.
\label{thm_1217_0}
\end{theorem}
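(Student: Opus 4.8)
The plan is to show that $M$ is simply connected and then invoke the Cartan–Hadamard-type fact, or rather a direct argument via large-scale topology, to conclude $M \cong \RR^n$. The starting point is that a net $L \subseteq \RR^n$ that embeds isometrically in $M$ forces $M$ to be non-compact (a compact manifold has bounded diameter, but $L$ has pairs of points at arbitrarily large distance) and, more importantly, that $M$ together with its distance function is quasi-isometric to $\RR^n$: indeed, the embedding $\iota : L \to M$ is a quasi-isometry onto its image because $L$ is a net in $\RR^n$ and $\iota(L)$ is coarsely dense in $M$ (every point of $M$ lies within bounded distance of $\iota(L)$ — this needs to be checked, but should follow from completeness together with the net property by a volume/packing argument, or can be arranged by enlarging $L$ slightly). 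Hence $M$ is a complete Riemannian $n$-manifold quasi-isometric to $\RR^n$.

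Next I would exploit that $M$, being quasi-isometric to $\RR^n$, has one end and polynomial volume growth of degree exactly $n$, hence is parabolic-type only when $n\le 2$ but in all cases has the same asymptotic cone structure as $\RR^n$. The key step is to establish simple connectedness: the universal cover $\tilde M \to M$ would be a complete $n$-manifold whose deck group $\Gamma = \pi_1(M)$ acts freely and properly discontinuously by isometries; since $M$ has polynomial growth of degree $n$, so does $\tilde M$, which forces $\Gamma$ to be a virtually nilpotent group of polynomial growth degree $0$ — i.e. finite — and since $\tilde M$ is non-compact and $\Gamma$ acts freely, $\Gamma$ is trivial. (More carefully: $\mathrm{vol}\, B_{\tilde M}(\tilde x, R) \geq |\Gamma \cap B(\tilde x, R - D)| \cdot c$ for the covering translates, so infinite $\Gamma$ plus polynomial growth of $M$ is compatible, but then one uses that a net in $\RR^n$ is \emph{amenable with Følner sets that are balls} and the asymptotic cone of $M$ is $\RR^n$, which has trivial fundamental group at infinity, ruling out non-trivial $\Gamma$; alternatively, cite Gromov's theorem that polynomial growth forces virtual nilpotency and then a dimension count on the growth exponent forces $\Gamma$ finite, hence trivial by freeness and non-compactness.)

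Once $M$ is simply connected, non-compact, and complete, I would like to conclude $M \cong \RR^n$ directly. This does \emph{not} follow from simple connectedness alone in dimensions $\geq 3$ (there are exotic contractible manifolds), so the real content is to show $M$ is \emph{contractible} and moreover \emph{simply connected at infinity}; then Stallings' characterization (for $n \geq 5$), together with the resolution in dimensions $3$ and $4$ via Perelman and Freedman respectively, gives $M \cong \RR^n$. Simple connectedness at infinity should follow from the fact that the complement of a large ball in $M$ is quasi-isometric to the complement of a large ball in $\RR^n$, which is simply connected for $n \geq 3$ (and the $n=2$ case is classical: a simply connected non-compact surface is $\RR^2$). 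Contractibility will come from vanishing of all homotopy groups, which one gets from the quasi-isometry to $\RR^n$ via the fact that $M$ is aspherical-free — more precisely, $H_*(\tilde M) = H_*(M)$ and a Mayer–Vietoris / Poincaré duality argument using the one-endedness and the growth controls the homology to be that of a point.

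\medskip

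The main obstacle I anticipate is the passage from "quasi-isometric to $\RR^n$" to "diffeomorphic to $\RR^n$": quasi-isometry is a very coarse relation and by itself controls neither the topology at finite scales nor the smooth structure. The argument must extract genuine topological information — simple connectedness, then simple connectedness at infinity, then invoke the appropriate high-dimensional (Stallings) and low-dimensional (geometrization, Freedman) classification results — and each of these implications needs the net hypothesis in an essential way rather than just the coarse geometry. I expect the cleanest route is: (i) show $\pi_1(M)$ is finite via polynomial growth, hence trivial; (ii) show $M$ is simply connected at infinity from the net structure; (iii) conclude contractibility; (iv) cite Stallings/Freedman/Perelman. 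Verifying (ii) carefully — that the net forces the right behavior at infinity of $M$ and not merely of $\RR^n$ — is the delicate point, since a priori the embedding only constrains distances, not the ambient topology near infinity of $M$.
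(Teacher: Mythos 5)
Your approach is genuinely different from the paper's and has several concrete gaps. The paper does not go through coarse geometry or topological classification theorems at all: it shows directly (Proposition~\ref{prop_1155} and Corollary~\ref{cor_131}) that for any net point $p \in L$, every geodesic emanating from $p$ is minimizing and every unit vector in $S_p M$ is the initial direction of such a geodesic, by studying transport lines of the Busemann-type functions in $\partial_v M$ and applying a Brouwer-degree argument to the odd continuous map $S^{n-1} \ni v \mapsto \dot{\gamma}_{p,v}(0) \in S_p M$. It follows immediately that $\text{cut}(p) = \emptyset$ and $\exp_p : T_p M \to M$ is a diffeomorphism, giving the theorem by an explicit map.

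The gaps in your route: \emph{(a)} You assert that $\iota(L)$ is coarsely dense in $M$ ``by a volume/packing argument,'' but the hypothesis constrains only distances within $\iota(L)$, and a priori $M$ could have regions far from $\iota(L)$ — establishing this is essentially the hard part, and the paper's way of doing it (showing every geodesic ray from $p \in L$ is approached by geodesic segments to faraway net points) is a consequence of the transport-line machinery, not a soft packing argument. \emph{(b)} Even granting coarse density, ``quasi-isometric to $\RR^n$'' does not imply simple connectedness: $M = \RR^n \# (S^1 \times S^{n-1})$ with a complete metric Euclidean outside a compact set is quasi-isometric to $\RR^n$, has polynomial volume growth of degree $n$, is one-ended and simply connected at infinity for $n \geq 3$, yet $\pi_1 \cong \ZZ$. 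Your claim that degree-$n$ growth for $M$ forces degree-$n$ growth for $\tilde{M}$ is false, and the covering inequality you write only bounds $\mathrm{vol}\,B_{\tilde{M}}$ from below, yielding no contradiction; what actually excludes such topologies is the exact isometric embedding of the net, which you have replaced by the much weaker coarse statement. \emph{(c)} In dimension $4$, Freedman's theorem gives only a homeomorphism, and exotic $\RR^4$'s (which admit complete Riemannian metrics) exist, so your route cannot deliver the claimed diffeomorphism when $n = 4$; the paper avoids this because the diffeomorphism is provided by the exponential map itself.
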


In the case where the curvature tensor of $M$ from Theorem \ref{thm_1217_0} is assumed compactly supported, it is not too
difficult to prove that $M$ is isometric to the Euclidean space $\RR^n$, by reducing matters to solved partial cases
of the {\it boundary distance conjecture} of Michel \cite{M}.
This conjecture  suggests that in a simple Riemannian manifold with boundary,
the collection of distances between boundary points determines the Riemannian structure, up to an isometry.  Michel's conjecture has been
proven
in two dimensions by Pestov and Uhlmann \cite{PU}.

\begin{definition} We say that a subset $X$ of an $n$-dimensional, complete, connected Riemannian manifold $M$ is metrically rigid, if whenever $X$
 isometrically embeds in a complete, connected, $n$-dimensional Riemannian manifold $\tilde{M}$, necessarily $\tilde{M}$ is isometric to $M$.
 \end{definition}

Nets in the Euclidean plane are metrically rigid, and so are random instances of a Poisson process with uniform intensity in the plane,
as we argue below.
One interesting question
in this direction is the metric rigidity of discrete subsets  in complete, simply-connected
Riemannian manifolds of non-positive curvature. Another natural question is whether there exist finitary versions of Theorem \ref{thm_12170},
 in which we isometrically embed a large, finite chunk of the net $L$ and wish to obtain some geometric corollaries.

\medskip We proceed to describe a notion slightly more inclusive than that of a net, which also covers  instances
of Poisson processes. We call an open set $S \subseteq \RR^n$  a {\it sector} if there exist $x_0 \in \RR^n$ and an open,
connected set $U \subseteq S^{n-1} = \{ x \in \RR^n \, ; \, |x| = 1 \}$ such that $ S = \left \{ x_0 + r \theta \, ; \, \theta \in U, r > 0 \right \}$.
For a function $\vphi: (0, \infty) \rightarrow (0, \infty)$ and for $n \geq 2$ we write
$$ \subgraph_n(\vphi) = \{ (x,y) \in \RR \times \RR^{n-1} \, ; \, x > 0, |y| \leq \vphi(x) \} \subset \RR^n. $$
We consider two {\it quasi-net} conditions that a subset $L \subseteq \RR^n$ may satisfy:
\begin{enumerate}
\item[(QN1)] There exists a non-decreasing function $\vphi: (0, \infty) \rightarrow (0, \infty)$ with $\lim_{r \rightarrow \infty} \vphi(r) / \sqrt{r} = 0$
such that for any isometry $T: \RR^n \rightarrow \RR^n$, $$ L \cap T( \subgraph_n(\vphi) ) \neq \emptyset. $$
\item[(QN2)] For any non-empty, open sector $S \subseteq \RR^n$  there exists a sequence $(p_m)_{m \geq 1}$
with $p_m \in L \cap S$ for all $m$ such that
$$ \lim_{m \rightarrow \infty} \frac{|p_{m+1}|}{|p_m|} = 1 \qquad \text{and} \qquad \lim_{m \rightarrow \infty} |p_m| = \infty. $$
\end{enumerate}
It is clear that any net in $\RR^n$ satisfies conditions (QN1) and (QN2). A random instance of a Poisson process with uniform intensity in $\RR^n$ is
a discrete set satisfying (QN1) and (QN2), with probability one. Hence Theorem \ref{thm_12170}
and Theorem \ref{thm_1217_0} are particular cases of the following:

\begin{theorem}
Let $M$ be a complete, connected, $2$-dimensional Riemannian manifold.
Suppose that there exists a discrete set in $\RR^2$ which satisfies conditions (QN1) and (QN2)
and that embeds isometrically in $M$.
Then the manifold $M$ is flat and it is isometric to the Euclidean plane $\RR^2$.
\label{thm_1217}
\end{theorem}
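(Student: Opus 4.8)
The plan is to show that $M$ must be simple at infinity in a strong enough sense to force flatness, by combining a comparison-geometry argument with the rigidity of Euclidean distances. Let $\iota: L \to M$ be the isometric embedding, with $L \subseteq \RR^2$ satisfying (QN1) and (QN2). First I would establish that $M$ is non-negatively curved on average along large scales: the key point is that a net (or quasi-net) in $\RR^2$ contains, for every $R$, a configuration of points whose mutual distances match those of a large Euclidean grid, and Euclidean grids cannot be embedded with the correct distances into a surface that is ``too negatively curved'' over a large region — more precisely, triangle comparison (Toponogov-type inequalities) applied to geodesic triangles in $M$ whose vertices are image points forces a lower bound on the total curvature inside. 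Dually, a region of strictly positive total curvature would shorten some Euclidean distance via the Gauss--Bonnet defect in geodesic triangles, again contradicting exact equality. The upshot I would aim for is that every geodesic triangle in $M$ spanned by points of $\iota(L)$ has zero angle excess, hence (since such triangles become arbitrarily large and ``fill up'' $M$ by the quasi-net conditions) the total curvature of $M$ vanishes on every compact set, i.e. $M$ is flat.

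The second half is to upgrade ``flat'' to ``isometric to $\RR^2$''. A complete flat connected surface is, by the classification of flat surfaces, isometric to one of: $\RR^2$, a cylinder $\RR \times (\RR/\ell\ZZ)$, a flat torus, a Möbius band, or a Klein bottle. The compact cases (torus, Klein bottle) are immediately excluded because they have bounded diameter while $\iota(L)$ is an unbounded metric space (by (QN2), or by noting $L$ contains points at arbitrarily large mutual distance). The cylinder and Möbius band are excluded by a systole/injectivity-radius argument: in such a surface there is a closed geodesic of some finite length $\ell$, so the distance function is $\ell$-periodic in one direction and in particular the metric space $(\iota(L), d_M)$ has the property that arbitrarily large balls contain $\lesssim R$ points within distance $R$ of growth linear rather than quadratic — whereas a net in $\RR^2$ has quadratic volume growth. (Here I would invoke (QN1): the subgraph condition with $\vphi(r)/\sqrt r \to 0$ guarantees $L$ meets every long thin parabolic region, which is incompatible with the metric being a quotient of $\RR^2$ by a translation, since in the cylinder two points at distance $R$ apart can be ``wrapped'' and the embedding would have to distort them.) A cleaner route: once $M$ is flat and complete, it is covered by $\RR^2$; if the deck group is nontrivial it contains an isometry $g \ne \id$, and then for the embedding $\iota$ one derives that certain distinct points of $L$ would need to map near a single $g$-orbit, contradicting that their Euclidean distances are unbounded — or more directly, lift $\iota$ to the universal cover and use that a net in $\RR^2$ is not metrically rigid-compatible with a nontrivial quotient. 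The only flat surface compatible with containing an isometric copy of a planar quasi-net is therefore $\RR^2$ itself.

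The main obstacle, I expect, is the first step: turning ``the embedded points realize exact Euclidean distances'' into ``the curvature integrates to zero'' without any a priori two-sided curvature bound on $M$. The difficulty is that $M$ is only assumed complete and connected — it could, a priori, have wild curvature concentrated far from any individual image point. The quasi-net conditions (QN1) and (QN2) are precisely what is designed to defeat this: (QN1) ensures $\iota(L)$ comes close to \emph{every} isometrically-placed thin parabolic subgraph, so no large region of $M$ can be ``missed'', while (QN2) supplies, in every direction and at every scale, triples of points with ratio of distances tending to $1$, which lets one run the comparison argument on nearly-degenerate long triangles where the Euclidean model is a flat strip and any curvature in between produces a first-order (not merely second-order) discrepancy in distance. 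Making the Gauss--Bonnet / Toponogov estimate quantitative enough — i.e. showing that a total curvature of size $\eps$ inside a triangle of size $R$ changes some side length by an amount that does not go to zero as $R \to \infty$ — is the crux; I would handle the positive-curvature case via the angle-sum defect and the negative-curvature case via the exponential divergence of geodesics (length of a geodesic connecting two points is strictly larger than the comparison length when negative curvature is enclosed), patching the two via the signed total curvature and the structure of geodesics in a complete surface (using that, by Theorem~\ref{thm_1217_0} in dimension $2$, $M$ is already known to be diffeomorphic to $\RR^2$, so geodesic triangles bound disks and Gauss--Bonnet applies cleanly).
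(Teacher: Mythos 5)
Your proposal takes a genuinely different route from the paper, and the difficulty you flag yourself in the last paragraph is, in my view, a real gap that the sketch does not close.

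The paper does not attempt to extract flatness from comparison geometry on geodesic triangles. Instead it proves (via Busemann-type functions $B_v$ associated with the asymptotic directions of $L$) that every geodesic through every point of $L$ is minimizing, then that $\partial_v M$ is a singleton for each direction $v$, then that $M$ is foliated by minimizing transport lines in \emph{every} direction through \emph{every} point, hence has no conjugate points at all (Corollary \ref{prop_159}); separately it shows that the rescaled metric converges locally uniformly to the Euclidean one (Proposition \ref{lem_445}), so that $\area(D_M(0,r))/r^2 \to \pi$. The final step is to invoke the equality case of the Bangert--Emmerich area-growth rigidity theorem for surfaces without conjugate points (Theorem \ref{thm_BE}). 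All the hard pointwise curvature information is extracted from a global quantity (asymptotic area) plus the qualitative fact of no conjugate points; there is no Toponogov or Gauss--Bonnet comparison on triangles with $L$-vertices.

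The gap in your first step is concrete. Toponogov comparison requires a one-sided curvature bound, and no curvature bound is assumed. More seriously, even granting that geodesic triangles with vertices in $\iota(L)$ existed, bounded disks, and had zero angle excess, Gauss--Bonnet only yields zero \emph{signed total} curvature inside each such triangle; positive and negative curvature can cancel, and this does not force pointwise flatness. Your proposed patch (angle-sum defect for positive curvature, geodesic divergence for negative) is not worked out and I do not see how it separates the two signs without an independent rigidity input. Most fundamentally, the side lengths of a geodesic triangle in a general Riemannian surface do \emph{not} determine its angle excess, so ``exact Euclidean distances between $L$-points'' does not by itself pin down the angles at the vertices — the claim ``every geodesic triangle spanned by $\iota(L)$ has zero angle excess'' is essentially the whole theorem, not a step towards it. (It is precisely this obstruction that forces the paper into the no-conjugate-points / area-growth machinery, and which in the compactly-supported case would instead be handled by the Pestov--Uhlmann boundary rigidity theorem, as the introduction notes.) Your second half — upgrading flat to $\RR^2$ via the classification of complete flat surfaces and a growth/unboundedness argument — is essentially correct and would work; the paper instead finishes more cheaply by observing that all geodesics in $M$ are minimizing, which rules out the cylinder, Möbius band, torus and Klein bottle at once.
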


\begin{theorem}
Let $M$ be a complete, connected, $n$-dimensional Riemannian manifold.
Suppose that there exists a discrete set in $\RR^n$ which satisfies condition (QN1)
and that embeds isometrically in $M$.
Then $M$ is diffeomorphic to $\RR^n$.
\label{thm_1217_}
\end{theorem}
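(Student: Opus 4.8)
\medskip
\noindent\textbf{Proof strategy (Theorem~\ref{thm_1217_}).}
The plan is to exhibit a basepoint $p\in M$ for which the distance function $r_p:=d(p,\cdot)$ has no critical points on $M\setminus\{p\}$. By the standard Morse theory of distance functions on complete Riemannian manifolds (in the style of Grove--Shiohama and Cheeger), this alone forces every sublevel set $\{r_p\le t\}$ to be diffeomorphic to a closed $n$-ball, hence $M=\bigcup_{t>0}\{r_p\le t\}$ to be diffeomorphic to $\RR^n$. Recall that a point $q\neq p$ is \emph{regular} for $r_p$ precisely when the initial velocities at $q$ of all minimizing geodesics from $q$ to $p$ lie in a common open half-space of $T_qM$; so the whole task is to exclude a point $q$ at which these velocities ``point in every direction''.

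Two easy preliminaries. First, $M$ is non-compact: applying (QN1) to isometries $T$ that push $\subgraph_n(\vphi)$ far out along a fixed ray shows that $L$ is unbounded, whence $\mathrm{Diam}(M)=\infty$ because $\iota$ preserves distances. Fix $a_0\in L$ and set $p=\iota(a_0)$.

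The geometric heart of the argument is a large-scale rigidity statement extracted from (QN1), in which the growth restriction $\vphi(r)=o(\sqrt r)$ is essential. For every $u\in S^{n-1}$ and every $N>0$ there is a point $b=b_{u,N}\in L$ with $|a_0-b|\ge N$ and with the direction $(b-a_0)/|b-a_0|$ tending to $u$ as $N\to\infty$: one applies (QN1) to an isometry carrying $\subgraph_n(\vphi)$ onto a thin horn around the ray $\{a_0+(N+s)u:\ s>0\}$, and the lateral error $\vphi(s)=o(\sqrt s)$ at depth $s$ is negligible against $N+s$. Consequently, letting $N\to\infty$ and using completeness, a subsequence of the minimizing geodesics from $p$ to $\iota(b_{u,N})$ converges to a ray $\gamma_u$ emanating from $p$; and since $d(\iota x,\iota y)=|x-y|$ exactly, any finite set of the far-away points $\iota(b_{u,N})$, together with $p$, is isometric to a finite subset of $\RR^n$. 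A similar iteration of (QN1) with few, long steps produces, at arbitrarily large scales, chains of net points whose images in $M$ are arbitrarily good approximate minimizing geodesics. In short, $M$ looks Euclidean ``seen from $p$, at large scale, in every direction''.

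I expect the main obstacle to be converting a hypothetical critical point $q$ of $r_p$ into a contradiction with this large-scale Euclidean rigidity. If the minimizing geodesics from $q$ to $p$ point into every half-space of $T_qM$, the strategy is to produce a finite configuration of embedded net points — some lying near the minimizing geodesics issuing from $q$, others among the far-away images $\iota(b_{u,N})$ that sit ``beyond'' $q$ as seen from $p$ — whose mutual $M$-distances cannot be realized by any placement of points in $\RR^n$; equivalently, to show that such a $q$ forces either a conjugate point along some minimizing geodesic from $p$ (so that a geodesic between two controlled net images fails to minimize, contradicting $d(\iota x,\iota y)=|x-y|$) or a genuine bifurcation of minimizing geodesics between two far-away images (which distorts the exact Euclidean distance data by a definite amount). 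This is exactly the step that uses that (QN1) supplies net points in \emph{every} direction and at \emph{unbounded} scale, not merely a dense family of them. Once $r_p$ is shown to be regular on all of $M\setminus\{p\}$, the Morse theory quoted at the start completes the proof.
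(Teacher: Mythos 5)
There is a genuine gap, and you identify it yourself: the step that excludes critical points of $r_p=d(p,\cdot)$ is not carried out, only announced as ``the main obstacle.'' Your suggestion for filling it --- that a critical point $q$ would force either a conjugate point along a minimizing geodesic from $p$ or a bifurcation of minimizing geodesics between two net images, thereby contradicting the exactness of the distances on $\iota(L)$ --- does not obviously follow. The hypothetical critical point $q$ is an arbitrary point of $M$; it is not a net point, and nothing forces the multiple minimizing geodesics from $q$ to $p$ (or any conjugate locus they may meet) to pass through or near points of $\iota(L)$ where the Euclidean distance constraint is actually available. Without such control, ``produce a finite configuration of embedded net points whose mutual $M$-distances cannot be realized in $\RR^n$'' is a hope, not a mechanism. (A secondary, smaller point: your plan aims to show $r_p$ is regular away from $p$; even granting the Grove--Shiohama machinery for the conclusion, this would give $M\cong\RR^n$ without yielding any of the geodesic structure on which the rest of the paper depends.)

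The missing idea --- and the one the paper actually uses --- is a topological degree argument. Fix $p\in L$. Using (QN1), for every $v\in S^{n-1}$ one produces a sequence in $L$ drifting narrowly to $v$ and, passing to a limit of the minimizing geodesics from $p$ to these points, obtains a complete \emph{minimizing} geodesic $\gamma_{p,v}$ through $p$; it is a transport line of a Busemann-type $1$-Lipschitz function $B\in\partial_vM$ (Proposition~\ref{prop_1155}). The key point is that the resulting map $F:S^{n-1}\to S_pM$, $F(v)=\dot\gamma_{p,v}(0)$, is continuous and odd, hence has odd Brouwer degree, hence is \emph{onto}. Surjectivity of $F$ means that every geodesic emanating from $p$ coincides with some $\gamma_{p,v}$ and is therefore minimizing on all of $\RR$; so $\mathrm{cut}(p)=\emptyset$ and $\exp_p:T_pM\to M$ is a diffeomorphism (Corollary~\ref{cor_131}), which is strictly stronger than the regularity of $r_p$. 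Your proposal contains the construction of limit rays in many directions, but not the observation that oddness plus continuity forces the ray map to hit every direction in $S_pM$; without it, one cannot rule out that some directions at $p$ (and hence some geodesics through $p$) are ``missed'' by the rays built from $L$, which is exactly where a critical point or a nontrivial cut locus could hide.
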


\medskip The remainder of this paper is devoted almost entirely  to the proofs of Theorem \ref{thm_1217} and
Theorem \ref{thm_1217_}.
The key step in the proof of Theorem \ref{thm_1217} is to show that $M$ has no conjugate points.
This enables us to make contact with the developed mathematical literature
on the rigidity of Riemannian manifolds without conjugate points under topological assumptions, under curvature assumptions or
under isoperimetric assumptions. The relevant literature begins
with the works
of Morse and Hedlund \cite{MH} and Hopf \cite{H}, and continues with contributions by Bangert and Emmerich \cite{BE1, BE2}, Burago and Ivanov \cite{BI},
Burns and Knieper \cite{BK}, Busemann \cite{Bu}, Croke \cite{Cr1, Cr2} and others. At the final step of the argument
below we apply the equality case of the area growth inequality due to Bangert and Emmerich \cite{BE2},
whose beautiful proof is based on Hopf's method.

\medskip The mathematical literature pertaining to nets that approximate a Riemannian manifold
includes the analysis by Fefferman, Ivanov, Kurylev, Lassas and Narayanan \cite{F, F2},
and the works by Fujiwara \cite{Fu} and by Burago, Ivanov and Kurylev \cite{BIK} on approximating the spectrum and eigenfunctions of the Laplacian via a net.
These works are related to the useful idea of a {\it diffusion map}, as in Belkin and Niyogi \cite{BN}, Coifmann and Lafon \cite{CL} and Singer
\cite{S}.

\medskip All of the Riemannian manifolds below are assumed to be $C^2$-smooth, and all parametrizations of
geodesics are by arclength. Thus a geodesic here is always of unit speed. We write that a function
$f(t)$ is $o(t)$ as $t \rightarrow \infty$ if $f(t) / t$ tends to zero as $t \to \infty$.

\medskip
{\it Acknowledgements.} The second-named author would like to thank
Charles Fefferman for interesting discussions on possible Riemannian analogs of Whitney's extension problem,
and to Adrian Nachman for excellent explanations on the boundary rigidity problem and other inverse problems. Both authors thank Itai Benjamini
for his interest and encouragement. Supported by a grant from the Israel Science Foundation (ISF).

\section{Lipschitz functions}

We begin the proofs of Theorem \ref{thm_1217} and Theorem \ref{thm_1217_} with  some background on geodesics and Lipschitz functions.
Our standard reference for Riemannian geometry is Cheeger and Ebin \cite{CE}.

\medskip We work in a complete, connected,  Riemannian manifold $M$ with distance function $d$.
A {\it minimizing geodesic} is a curve $\gamma: I \rightarrow M$, where $I \subseteq \RR$ is an interval (i.e., a connected set), with
$$ d(\gamma(t), \gamma(s)) = |t-s| \qquad \qquad \text{for all} \ s,t \in I. $$
As is customary, our notation does not fully distinguish between the parametrized curve $\gamma: I \rightarrow M$
and its image $\gamma( I )$ which is just a subset of $M$, sometimes endowed with an orientation. It should be clear from the context whether we mean a parametrized
curve, or its image in $M$.

\medskip A curve $\gamma: I \rightarrow M$ is a geodesic if the interval $I$ may be covered by open
intervals on each of which $\gamma$ is a minimizing geodesic.
In the case where $I = \RR$ we say that the geodesic $\gamma$ is complete.
When $I = [0, \infty)$ or $I = (0, \infty)$ we say that $\gamma$ is a ray, and if $I \subseteq \RR$
is of finite length we say that $\gamma$ is a geodesic segment.
Since $M$ is complete, for any $x,y \in M$ there exists a minimizing geodesic segment connecting $x$ and $y$.
A minimizing geodesic ray cannot intersect a minimizing geodesic segment at more than one point unless they overlap, see \cite{CE}.

\medskip
Let $\gamma_m: I_m \rightarrow M \ (m=1,2,\ldots)$ be a sequence of geodesics. We say that the sequence converges to a geodesic $\gamma: I \rightarrow M$
if $I = \cup_{m \geq 1} \cap_{k \geq m} I_k$ and for any $t \in I$,
$$ \gamma_{m}(t)\xrightarrow{m\to\infty}\gamma(t). $$
In the case where $I_m = I$ for all $m$ the following holds: For any fixed $t_0 \in I$, the convergence $\gamma_m \longrightarrow \gamma$ is equivalent to the requirement that
$$ \gamma_m(t_0) \xrightarrow{m\to\infty} \gamma(t_0) \qquad \text{and} \qquad \dot{\gamma}_m(t_0) \xrightarrow{m\to\infty} \dot{\gamma}(t_0). $$
Here $\dot{\gamma}(t_0) \in T_{\gamma(t_0)} M$ is the tangent vector to the geodesic $\gamma$, and $T_p M$ is the tangent space to $M$ at
the point $p \in M$. A sequence of  unparametrized geodesics is said to converge if its geodesics may be parametrized to yield a converging sequence in the above sense.

\medskip
The continuity of the distance function
implies that the limit of a converging sequence of minimizing geodesics, is itself a minimizing geodesic.
Any sequence of geodesics passing through a fixed point $x \in M$, has a convergent subsequence.
We say that a sequence of points in $M$ tends to infinity if any
compact $K\subseteq M$ contains only finitely many points from the sequence.
When $x, x_1,x_2,\ldots$ are points in $M$ with $x_m \longrightarrow \infty$ and $\gamma_m$ is a minimizing geodesic
connecting $x$ with $x_m$, the sequence $(\gamma_m)_{m \geq 1}$ has a subsequence
that converges to a minimizing geodesic ray emanating from $x$.

\medskip
Lipschitz functions are somewhat ``dual'' to curves and geodesics in the following sense:
Any rectifiable curve between $x$ and $y$ provides an upper bound for the distance
$d(x,y)$. On the other hand, a $1$-Lipschitz function $f: M \rightarrow \RR$ is a function that satisfies $|f(x) - f(y)| \leq d(x,y)$
for all $x,y \in M$, and hence it provides lower bounds for the distance $d(x,y)$. When $f: M \rightarrow \RR$ is $1$-Lipschitz and $\gamma:I \rightarrow M$ is a geodesic,
\begin{equation} |f(\gamma(t)) - f(\gamma(s))| \leq d(\gamma(t), \gamma(s)) \leq |t-s| \qquad \text{for all} \ s,t \in I.
\label{eq_10025} \end{equation}
We say that the geodesic $\gamma$ is a {\it transport curve} of the $1$-Lipschitz function $f$ if
\begin{equation}  f(\gamma(t)) - f(\gamma(s)) = t-s \qquad \qquad \text{for all} \ s,t \in I. \label{eq_1003} \end{equation}
Thus, the function $f$ grows with unit speed along a transport curve.
This terminology comes from the theory of optimal transport, see e.g. Evans and Gangbo \cite{EG} or \cite[Section 2.1]{K}.
If $I = \RR$ then we say that the transport curve $\gamma$ is a {\it transport line} and if the transport curve $\gamma$ is a geodesic ray then $\gamma$ is called a {\it transport ray}. It follows from (\ref{eq_10025}) and (\ref{eq_1003})
that any transport curve is a minimizing geodesic.

\medskip When $\gamma: I \rightarrow M$ is a transport curve of a $1$-Lipschitz function $f$, the function $f$ is differentiable at $\gamma(t)$ for all $t$ in the interior of the interval $I \subseteq \RR$, as proven in Feldman and McCann \cite[Lemma 10]{FM}.
For any $t \in I$ such that $f$ is differentiable at $\gamma(t)$, we have
\begin{equation}
\nabla f(\gamma(t)) = \dot{\gamma}(t). \label{eq_825} \end{equation}
Indeed, it follows from (\ref{eq_1003}) that $\langle \nabla f(\gamma(t)), \dot{\gamma}(t) \rangle = 1$, where $\langle \cdot, \cdot \rangle$
and $| \cdot |$ are the Riemannian scalar product and norm in $T_{\gamma(t)} M$, and hence (\ref{eq_825}) follows
as  $|\nabla f(\gamma(t))| \leq 1$ and $|\dot{\gamma}(t)| = 1$.

\begin{lemma} Let $f: M \rightarrow \RR$ be a $1$-Lipschitz function. Suppose that $\gamma_1$ is a transport line
of $f$ and that $\gamma_2$ is a transport curve of $f$ with $\gamma_1 \cap \gamma_2 \neq \emptyset$. Then $\gamma_2 \subseteq \gamma_1$.
In particular, if $\gamma_2$ is a transport line as well, then the geodesics $\gamma_1$ and $\gamma_2$ coincide.
\label{lem_1005}
\end{lemma}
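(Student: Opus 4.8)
The plan is to show that $\gamma_1$ and $\gamma_2$ pass through a common point with the same velocity vector there, and then to invoke the uniqueness of geodesics. Fix $p \in \gamma_1 \cap \gamma_2$ and reparametrize both geodesics by a shift of the parameter so that $\gamma_1(0) = \gamma_2(0) = p$; let $I \subseteq \RR$ be the domain of the reparametrized $\gamma_2$, so $0 \in I$. Since $\gamma_1$ is a transport line, it is in particular a transport curve defined on the interval $\RR$, whose interior is all of $\RR$; hence $f$ is differentiable at every point of $\gamma_1$, and by (\ref{eq_825}) we have $\nabla f(p) = \dot\gamma_1(0)$. In particular $f$ is differentiable at $p$, and it remains only to prove that $\dot\gamma_2(0) = \nabla f(p)$ as well.

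To this end, assume first that $I$ contains some $s > 0$. The function $t \mapsto f(\gamma_2(t))$ equals $f(p) + t$ on $[0,s]$ by (\ref{eq_1003}), so it has right derivative $1$ at $t = 0$; since $f$ is differentiable at $p = \gamma_2(0)$, the chain rule gives $\langle \nabla f(p), \dot\gamma_2(0) \rangle = 1$. As $|\dot\gamma_2(0)| = 1$ and $|\nabla f(p)| \leq 1$ because $f$ is $1$-Lipschitz, the equality case of the Cauchy--Schwarz inequality forces $\nabla f(p) = \dot\gamma_2(0)$. If instead $I$ contains some $s < 0$, the same computation with the left derivative at $t = 0$ gives the identical conclusion, and if $I = \{0\}$ the lemma is trivial. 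Thus in all cases $\dot\gamma_2(0) = \nabla f(p) = \dot\gamma_1(0)$.

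Now $\gamma_1$ and $\gamma_2$ are geodesics with $\gamma_1(0) = \gamma_2(0)$ and $\dot\gamma_1(0) = \dot\gamma_2(0)$, so by the uniqueness of the solution of the geodesic equation they agree on the intersection of their domains, which is $I$; hence $\gamma_2 \subseteq \gamma_1$. If $\gamma_2$ is itself a transport line then $I = \RR$, so the two parametrized geodesics coincide.

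I do not anticipate a genuine obstacle here; the only point requiring care is the bookkeeping with one-sided derivatives when $p$ happens to be an endpoint of the domain of $\gamma_2$, which is precisely why I extract the differentiability of $f$ at $p$ from the transport line $\gamma_1$ rather than from $\gamma_2$, and split into the cases $s>0$ and $s<0$. An alternative route that avoids (\ref{eq_825}) altogether: for $u < 0$ and $s \in I$ with $s > 0$ one has $d(\gamma_1(u),\gamma_2(s)) \geq f(\gamma_2(s)) - f(\gamma_1(u)) = s - u = d(\gamma_1(u),p) + d(p,\gamma_2(s))$, so the concatenation of $\gamma_1|_{[u,0]}$ with $\gamma_2|_{[0,s]}$ is a minimizing geodesic, hence smooth at $p$, which again yields $\dot\gamma_1(0) = \dot\gamma_2(0)$.
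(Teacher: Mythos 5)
Your proof is correct and takes essentially the same route as the paper's one-line argument: both show that $\gamma_1$ and $\gamma_2$ are geodesics through the common point with tangent $\nabla f$ there (using the Feldman--McCann differentiability along the transport line $\gamma_1$ together with (\ref{eq_825})), and then invoke uniqueness of geodesics. Your write-up is somewhat more careful than the paper's about the case where the intersection point is an endpoint of $\gamma_2$'s domain, and the alternative concatenation-plus-minimality argument you sketch at the end is a pleasant way to bypass (\ref{eq_825}) entirely.
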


\begin{proof} Since both $\gamma_{1}$ and $\gamma_{2}$ are minimizing geodesics passing through a point $x\in\gamma_{1}\cap\gamma_{2}$ in the direction $\nabla f(x)$, necessarily $\gamma_2 \subseteq \gamma_1$.
\end{proof}

\begin{corollary} Let $f: M \rightarrow \RR$ be a $1$-Lipschitz function, let $\gamma$ be a transport line of $f$ and
fix a point $x \in \gamma$.
Then for any $y \in M$,
$$
y \in \gamma \qquad \Longleftrightarrow \qquad |f(y) - f(x)| = d(x,y). $$ \label{cor_230}
\end{corollary}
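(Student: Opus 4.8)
The plan is to prove the two implications of the stated equivalence separately, with Lemma \ref{lem_1005} doing the work in the nontrivial direction. Throughout, parametrize the transport line $\gamma$ by arclength on all of $\RR$, and recall that every transport curve is in particular a minimizing geodesic.

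The implication $y \in \gamma \Rightarrow |f(y) - f(x)| = d(x,y)$ is immediate: choose $t_0, t_1 \in \RR$ with $\gamma(t_0) = x$ and $\gamma(t_1) = y$; then (\ref{eq_1003}) gives $f(y) - f(x) = t_1 - t_0$, while $\gamma$ being minimizing gives $d(x,y) = |t_1 - t_0|$, so $|f(y) - f(x)| = d(x,y)$. For the converse, suppose $|f(y) - f(x)| = d(x,y)$. Since $-f$ is again $1$-Lipschitz and the orientation-reversed curve $t \mapsto \gamma(-t)$ is a transport line of $-f$ still passing through $x$, we may assume without loss of generality that $f(y) - f(x) = d(x,y) =: \ell \geq 0$. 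By completeness pick a minimizing geodesic $\sigma: [0,\ell] \rightarrow M$ from $x$ to $y$. The idea is to show that $\sigma$ is forced to be a transport curve of $f$: applying the $1$-Lipschitz bound (\ref{eq_10025}) along $\sigma$, for any $0 \leq s \leq t \leq \ell$ we have
$$ \ell = f(y) - f(x) = [f(y) - f(\sigma(t))] + [f(\sigma(t)) - f(\sigma(s))] + [f(\sigma(s)) - f(x)] \leq (\ell - t) + (t - s) + s = \ell, $$
so equality must hold in each bracket; in particular $f(\sigma(t)) - f(\sigma(s)) = t - s$, which is (\ref{eq_1003}) for $\sigma$. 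Hence $\sigma$ is a transport curve of $f$, it meets the transport line $\gamma$ at the point $x$, and Lemma \ref{lem_1005} yields $\sigma \subseteq \gamma$. In particular $y = \sigma(\ell) \in \gamma$, which is what we wanted.

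There is essentially no obstacle here: the only substantive point is the rigidity observation that a distance-realizing geodesic between two points whose $f$-values differ by exactly their distance must saturate the Lipschitz inequality everywhere and hence be a transport curve, after which Lemma \ref{lem_1005} closes the argument at once. The one routine care needed is the sign of $f(y) - f(x)$, handled by the symmetry $f \leftrightarrow -f$ combined with reversing the orientation of the transport line.
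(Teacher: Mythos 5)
Your proof is correct and follows essentially the same route as the paper: for the nontrivial direction, take a minimizing geodesic from $x$ to $y$, show that the equality $|f(y)-f(x)|=d(x,y)$ forces it to be a transport curve of $f$, and then invoke Lemma \ref{lem_1005} to conclude it lies inside $\gamma$. The paper simply asserts the transport-curve step without spelling out the sandwiching inequalities or the sign reduction, but the underlying argument is identical.
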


\begin{proof}
Assume that $|f(y) - f(x)| = d(x,y)$ and connect $y$ to $x$ by a minimizing geodesic
$\eta$. The geodesic $\eta$ is necessarily a transport curve of $f$ passing through the point $x \in \gamma$. We conclude from Lemma \ref{lem_1005} that $y \in \eta \subseteq \gamma$. For the other direction, since $x,y \in \gamma$
while $\gamma$ is a transport curve of $f$, it follows from (\ref{eq_10025}) and (\ref{eq_1003}) that $|f(y) - f(x)| = d(x,y)$.
\end{proof}

The first example of  a $1$-Lipschitz function in $M$ is the distance function
$x \mapsto d(p,x)$ from  a given point $p \in M$. Any minimizing geodesic segment connecting $p$
to a point $y \in M$ is a transport curve of this distance function.
The second example is the Busemann function of a minimizing geodesic $\gamma: [t_0, \infty) \rightarrow M$, defined
as
\begin{equation}
B_\gamma(x) = \lim_{t \rightarrow \infty} \left[ t - d(\gamma(t), x) \right]
= \sup_{t \geq t_0} \left[ t - d(\gamma(t), x)  \right]. \label{eq_1014}
\end{equation}
Our definition of $B_{\gamma}$ differs by a sign from the convention in Ballman, Gromov and Schroeder \cite{BG} and in Busemann \cite{Bu}.
It is well-known that the limit in (\ref{eq_1014}) always exists, since $t - d(\gamma(t), x)$ is non-decreasing in $t$ and bounded from above by $d(\gamma(t_0), x) + |t_0|$. Moreover, the function $B_{\gamma} : M \rightarrow \RR$ is a $1$-Lipschitz function.
Thanks to our sign convention, any minimizing geodesic  $\gamma: [t_0, \infty) \rightarrow M$ is a
transport curve of the $1$-Lipschitz function $B_{\gamma}$.

\medskip If a sequence of $1$-Lipschitz functions $f_{m}: M \rightarrow \RR \ \ (m\geq 1)$ converges pointwise
as $m \rightarrow \infty$ to a limit function $f: M \rightarrow \RR$, then $f$  is $1$-Lipschitz. Moreover, the convergence is locally uniform
by the Arzela-Ascoli theorem. We will frequently use the following fact: For a continuous $f$, the convergence $f_{m} \longrightarrow f$
is locally uniform if and only if whenever $M\ni x_{m} \longrightarrow x$, also $f_{m}(x_{m}) \longrightarrow f(x)$.

\begin{lemma}
Let $f_{m}:M\to\RR$ be a sequence of $1$-Lipschitz functions that converges pointwise to $f: M \to \RR$, and let $\gamma_{m}:I_{m}\to M$
be a sequence of geodesics converging to $\gamma:I\to M$ such that $\gamma_{m}$ is a transport curve of $f_m$ for all $m$.
Then $\gamma$ is a transport curve of $f$.
\label{lem_125}
\end{lemma}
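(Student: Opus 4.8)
The plan is simply to pass to the limit in the transport-curve identity (\ref{eq_1003}), written for each pair $(\gamma_m, f_m)$. Fix $s, t \in I$. By the definition of convergence of a sequence of geodesics we have $I = \bigcup_{m \geq 1} \bigcap_{k \geq m} I_k$, so there exists $m_0$ such that $s, t \in I_k$ for all $k \geq m_0$, and moreover $\gamma_k(s) \to \gamma(s)$ and $\gamma_k(t) \to \gamma(t)$ as $k \to \infty$. Since $\gamma_k$ is a transport curve of $f_k$, for every $k \geq m_0$ we have
$$ f_k(\gamma_k(t)) - f_k(\gamma_k(s)) = t - s. $$

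The next step is to justify taking $k \to \infty$ inside the left-hand side. The pointwise limit $f$ of the $1$-Lipschitz functions $f_k$ is itself $1$-Lipschitz, hence continuous, and the convergence $f_k \to f$ is locally uniform by the Arzela--Ascoli theorem. By the criterion recalled just before the statement of the lemma --- for continuous $f$, the convergence $f_k \to f$ is locally uniform if and only if $x_k \to x$ implies $f_k(x_k) \to f(x)$ --- applied to the sequences $x_k = \gamma_k(t)$ and $x_k = \gamma_k(s)$, we obtain $f_k(\gamma_k(t)) \to f(\gamma(t))$ and $f_k(\gamma_k(s)) \to f(\gamma(s))$. Letting $k \to \infty$ in the displayed identity therefore gives $f(\gamma(t)) - f(\gamma(s)) = t - s$. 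As $s, t \in I$ were arbitrary, $\gamma$ satisfies (\ref{eq_1003}).

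Finally, $\gamma$ is a geodesic by hypothesis (this is part of the assumed convergence $\gamma_m \to \gamma$), so $\gamma$ is a geodesic satisfying (\ref{eq_1003}), which is precisely the definition of a transport curve of $f$. (Alternatively, one may note that each $\gamma_m$, being a transport curve, is a minimizing geodesic, so its limit $\gamma$ is a minimizing geodesic as well.) There is essentially no real obstacle here: the only point requiring care is the bookkeeping of the index intervals $I_k$ versus $I$, and the interchange of the two limiting procedures --- in the base point and in the function --- which is exactly what the locally uniform convergence of the $f_k$ supplies.
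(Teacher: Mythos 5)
Your proof is correct and follows exactly the same route as the paper's: write the transport identity for $(\gamma_m, f_m)$, use the Arzela--Ascoli--induced local uniform convergence $f_m \to f$ together with $\gamma_m(t) \to \gamma(t)$ to pass to the limit, and conclude. The extra remarks about index bookkeeping for $I_k$ and about $\gamma$ being a (minimizing) geodesic are just more explicit spellings-out of steps the paper leaves implicit.
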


\begin{proof}
Let $s,t\in I$. Since $\gamma_{m}$ is a transport curve of $f_m$, for a sufficiently large $m$,
\begin{equation}
f_{m}(\gamma_{m}(t))-f_{m}(\gamma_{m}(s))=t-s.
\label{eq_125}
\end{equation}
Since the convergence $f_{m} \longrightarrow f$ is locally uniform
in $M$, we have $f_{m}(\gamma_{m}(t))\longrightarrow  f(\gamma(t))$ for
all $t\in I$. Letting $m\to\infty$ in (\ref{eq_125}) yields
$f(\gamma(t)) - f(\gamma(s)) = t-s $. \end{proof}

We say that a $1$-Lipschitz function $f: M \rightarrow \RR$ {\it induces a foliation by transport lines}, or in short {\it foliates}, if
for any $x \in M$ there exists a transport line of $f$ that contains $x$. By
 Lemma \ref{lem_1005}, in this case $M$ is the disjoint union of the
 transport lines of $f$. When a function $f$ foliates, it is differentiable everywhere in $M$.
The following proposition describes a way to produce $1$-Lipschitz functions that foliate.
For $x\in M$, we denote the cut-locus of $x$ by $\text{cut}(x) \subseteq M$. See \cite{CE} for information
about the cut-locus.

\begin{proposition}
Let $(y_{m})_{m \geq 1}$ be a sequence of points in $M$ tending to infinity and let $(C_m)_{m \geq 1}$ be  real numbers.
Denote
$$ f_m(x) = C_m - d(x,y_m) \qquad \qquad \qquad (x \in M, m \geq 1) $$
and assume that $f_m \longrightarrow f$ pointwise in $M$ as $m \rightarrow \infty$.
Then:
\begin{enumerate}
\item[(i)] If $\text{cut}(y_{m})=\emptyset$ for all $m$, then $f$ foliates.
\item[(ii)] Suppose that $f$ foliates and fix $x \in M$. Then for any sequence of
minimizing geodesics $\gamma_m: [0, d(x,y_m)] \rightarrow M$
with $\gamma_m(0) = x$ and $\gamma_m(d(x,y_m)) = y_m$,
$$ \dot{\gamma}_{m}(0)\xrightarrow{m\to\infty}\nabla f(x).$$
\end{enumerate}
\label{lem_519}
\end{proposition}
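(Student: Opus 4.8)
The plan is to realize transport lines of $f$ as limits of suitably reparametrized minimizing geodesics running out to the points $y_m$, and then to read $\nabla f$ off from such geodesics. For part (i), fix $x \in M$. For all large $m$ the point $x$ is joined to $y_m$ by a minimizing geodesic; since $\text{cut}(y_m) = \emptyset$, the geodesic issuing from $y_m$ towards $x$ is minimizing along the entire ray, and after reparametrizing we obtain a minimizing geodesic $\sigma_m : (-\infty, d(x,y_m)] \to M$ with $\sigma_m(0) = x$ and $\sigma_m(d(x,y_m)) = y_m$. Because $\sigma_m$ stays minimizing from $y_m$, a direct computation gives $f_m(\sigma_m(t)) = f_m(x) + t$ for all $t \leq d(x,y_m)$, so $\sigma_m$ is a transport curve of $f_m$. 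The geodesics $\sigma_m$ all pass through the fixed point $x$, hence some subsequence converges to a geodesic $\sigma$; and since $y_m \to \infty$ forces $d(x,y_m) \to \infty$, the domain of $\sigma$ is all of $\RR$. By Lemma \ref{lem_125}, $\sigma$ is a transport curve of $f$ defined on $\RR$ — that is, a transport line of $f$ passing through $x$. As $x \in M$ was arbitrary, $f$ foliates.

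For part (ii), suppose $f$ foliates, so that $f$ is differentiable everywhere and $|\nabla f(x)| \leq 1$. Given minimizing geodesics $\gamma_m : [0, d(x,y_m)] \to M$ with $\gamma_m(0) = x$ and $\gamma_m(d(x,y_m)) = y_m$, the same computation as above — now using only that $\gamma_m$ is minimizing on its own domain — yields $f_m(\gamma_m(t)) = f_m(x) + t$, so each $\gamma_m$ is a transport curve of $f_m$. Assume towards a contradiction that $\dot\gamma_m(0)$ does not tend to $\nabla f(x)$. The vectors $\dot\gamma_m(0)$ are unit vectors of the single tangent space $T_x M$, so after passing to a subsequence we may assume $\dot\gamma_m(0) \to v$ with $|v| = 1$ and $v \neq \nabla f(x)$; by continuous dependence of geodesics on their initial data, $\gamma_m \to \gamma$, the geodesic determined by $\gamma(0) = x$ and $\dot\gamma(0) = v$, whose domain is $[0,\infty)$ because $d(x,y_m) \to \infty$. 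By Lemma \ref{lem_125}, $\gamma$ is a transport curve of $f$, so $f(\gamma(t)) = f(x) + t$ for $t \geq 0$; differentiating at $t = 0$, which is legitimate since $f$ is differentiable at $x$, gives $\langle \nabla f(x), \dot\gamma(0) \rangle = 1$, and since $|\nabla f(x)| \leq 1 = |\dot\gamma(0)|$ the equality case of the Cauchy--Schwarz inequality forces $v = \dot\gamma(0) = \nabla f(x)$ — a contradiction. Hence $\dot\gamma_m(0) \to \nabla f(x)$.

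The genuinely new point is in part (i): the hypothesis $\text{cut}(y_m) = \emptyset$ is exactly what allows the minimizing geodesic from $x$ to $y_m$ to be continued past $x$ while remaining minimizing from $y_m$, so that the limit is a two-sided transport line rather than merely the transport ray that a plain Busemann-function argument would produce. The rest is compactness together with Lemma \ref{lem_125}; the two points that require a little care are that the limiting geodesics have the expected unbounded domains — which is where $y_m \to \infty$ enters — and, in part (ii), that the differentiability of $f$ at $x$, available precisely because $f$ foliates, lets one identify $\dot\gamma(0)$ with $\nabla f(x)$ despite $x$ being an endpoint of the transport ray $\gamma$.
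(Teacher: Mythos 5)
Your proof is correct and follows essentially the same route as the paper: in (i) you use the empty cut-locus to extend each minimizing geodesic from $x$ to $y_m$ backward to a minimizing geodesic on $(-\infty, d(x,y_m)]$, verify it is a transport curve of $f_m$, pass to a subsequential limit, and invoke Lemma~\ref{lem_125}; in (ii) you again pass to a subsequence, apply Lemma~\ref{lem_125}, and identify the limiting initial velocity with $\nabla f(x)$ via the one-sided derivative of $f\circ\gamma$ at the endpoint together with the equality case of Cauchy--Schwarz, which is exactly the content of the paper's equation~(\ref{eq_825}). The only cosmetic difference is that you phrase (ii) as a proof by contradiction where the paper argues directly that every convergent subsequence has the right limit.
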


\begin{proof} Fix $x \in M$ and set $r_m = d(x, y_m)$. Since $M$ is complete and $y_m \longrightarrow \infty$,
necessarily $r_m \longrightarrow \infty$.  We proceed with the proof of (ii). \begin{enumerate}

\item[(ii)] It suffices to prove that any convergent subsequence of $(\dot{\gamma}_m(0))_{m \geq 1}$
tends to $\nabla f(x)$. We may thus pass to a subsequence, and assume that $\dot{\gamma}_m(0) \longrightarrow v$
for a unit vector $v \in T_x M$. Our goal is to prove that $v = \nabla f(x)$. Passing to a further subsequence,
 we may additionally  assume that the limit $$ \gamma=\lim_{m \rightarrow \infty}\gamma_{m} $$
exists. For any $t>0$,
we know that $t\in[0,r_{m}]$ for a sufficiently large $m$, which implies
that $\gamma$ is defined on $[0,\infty)$. The geodesic $\gamma_{m}$ is a transport curve of $f_m$ for any $m$, since for $t,s\in  [0, r_m]$ we have
$$ f_{m}(\gamma_{m}(t))-f_{m}(\gamma_{m}(s))=d(\gamma_{m}(s),y_{m})-d(\gamma_{m}(t),y_{m})=t-s.
$$
Lemma \ref{lem_125} shows that $\gamma$ is a transport curve of $f$. Since $f$ foliates, it is differentiable at $x$, and therefore $\dot{\gamma}_{m}(0) \longrightarrow \dot{\gamma}(0)=\nabla f(x)$ where we used (\ref{eq_825}) in the last passage.

\item[(i)] In order to show that $f$ foliates, we need to find
a transport line of $f$ that passes through $x$.
Since $\text{cut}(y_{m})=\emptyset$, we may write $\gamma_{m}:(-\infty, r_m] \to M$
for the unique minimizing geodesic with $$ \gamma_m(0) = x \qquad \text{and} \qquad \gamma_m(r_m) = y_m. $$
 Passing
to a subsequence, we may assume that $\gamma_{m}\longrightarrow\gamma$ for a
minimimizing geodesic $\gamma$ with $\gamma(0)=x$. Since $r_m \longrightarrow \infty$, the geodesic $\gamma$ is complete.
The geodesic $\gamma_m$ is a transport curve of $f_m$ for any $m$, and from
Lemma \ref{lem_125} we conclude  that $\gamma$ is a transport line
of $f$ that passes through $x$. Hence $f$ foliates. \qedhere
 \end{enumerate}
\end{proof}

\begin{lemma} Let $V$ be a metric space, and assume that with any $v \in V$ we associate a $1$-Lipschitz function $f_v: M \rightarrow \RR$.
Suppose that $f_v$ foliates for any $v \in V$, and that $f_v(x)$ varies continuously with $v \in V$ for any fixed $x \in M$. Then the map
$$ (x, v) \mapsto \nabla f_v(x) $$
is continuous in $M \times V$.
\label{lem_200}
\end{lemma}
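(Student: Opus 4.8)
The plan is to verify sequential continuity: fix a sequence $(x_m, v_m) \to (x,v)$ in $M \times V$ and show $\nabla f_{v_m}(x_m) \to \nabla f_v(x)$, where these gradients are regarded as points of the tangent bundle $TM$ (equivalently, read off in a fixed coordinate chart around $x$). The first observation is that $f_{v_m} \longrightarrow f_v$ pointwise: for each fixed $y \in M$ the value $f_v(y)$ depends continuously on $v$ by hypothesis, so $f_{v_m}(y) \to f_v(y)$. Since all the $f_{v_m}$ are $1$-Lipschitz, the Arzela-Ascoli theorem upgrades this to locally uniform convergence, which is exactly what is needed in order to apply Lemma \ref{lem_125}.

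Next, for each $m$ let $\gamma_m : \RR \to M$ be the transport line of $f_{v_m}$ passing through $x_m$ — it exists because $f_{v_m}$ foliates — normalized so that $\gamma_m(0) = x_m$, with its natural orientation (the direction of increase of $f_{v_m}$); then $\dot{\gamma}_m(0) = \nabla f_{v_m}(x_m)$ by (\ref{eq_825}), using that $f_{v_m}$ is differentiable everywhere since it foliates. It suffices to show that every subsequential limit of $(\dot{\gamma}_m(0))_m$ equals $\nabla f_v(x)$. Since $x_m \to x$, the unit vectors $\dot{\gamma}_m(0)$ eventually lie in the unit sphere bundle over a compact neighborhood of $x$, which is compact; so along a subsequence $\dot{\gamma}_m(0) \to w$ for some unit vector $w \in T_x M$. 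By continuous dependence of geodesics on their initial data, $\gamma_m \longrightarrow \gamma$ along this subsequence, where $\gamma$ is the geodesic with $\gamma(0) = x$ and $\dot{\gamma}(0) = w$; completeness of $M$ guarantees $\gamma$ is defined on all of $\RR$, and since each $I_m = \RR$ the limiting interval is also $\RR$.

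Now invoke Lemma \ref{lem_125}: each $\gamma_m$ is a transport curve of $f_{v_m}$, the functions $f_{v_m}$ converge (locally uniformly) to $f_v$, and $\gamma_m \to \gamma$, so $\gamma$ is a transport curve — in fact a transport line — of $f_v$. Because $f_v$ foliates it is differentiable at $x$, and applying (\ref{eq_825}) to the transport curve $\gamma$ at $t = 0$ yields $w = \dot{\gamma}(0) = \nabla f_v(x)$. Hence every subsequential limit of $(\dot{\gamma}_m(0))_m$ equals $\nabla f_v(x)$; combined with the compactness extraction, this forces $\nabla f_{v_m}(x_m) = \dot{\gamma}_m(0) \to \nabla f_v(x)$, which is the asserted continuity.

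I do not expect a serious obstacle here; the argument is essentially a parametrized version of the proof of Proposition \ref{lem_519}(ii). The only point requiring a little care is bookkeeping: the gradients $\nabla f_{v_m}(x_m)$ live in the varying tangent spaces $T_{x_m} M$, so the extraction of a convergent subsequence of $(\dot{\gamma}_m(0))_m$ and all the convergence statements should be phrased inside the tangent bundle $TM$ (or in a fixed chart around $x$), after which the passage to the limit via Lemma \ref{lem_125} is routine.
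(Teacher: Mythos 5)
Your argument is correct and follows essentially the same route as the paper's own proof: pass to locally uniform convergence of the $f_{v_m}$ via Arzela–Ascoli, take the transport lines $\gamma_m$ through $x_m$ with $\dot\gamma_m(0)=\nabla f_{v_m}(x_m)$, extract a convergent subsequence of geodesics, apply Lemma \ref{lem_125} to see the limit is a transport line of $f_v$, and read off $\nabla f_v(x)$ from (\ref{eq_825}). The only cosmetic difference is that you phrase the reduction as ``every subsequential limit equals $\nabla f_v(x)$'' whereas the paper phrases it as ``some subsequence converges to $\nabla f_v(x)$'' (to be applied to arbitrary subsequences), which amount to the same thing.
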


\begin{proof}
It suffices to show that for any sequence $M\times V\ni(x_{m},v_{m})\longrightarrow(x,v)\in M\times V$,
the sequence $( \nabla f_{v_{m}}(x_{m}) )_{m \geq 1}$ has a subsequence converging to $\nabla f_{v}(x)$.
Abbreviate $f_{m}=f_{v_{m}}$
and $f=f_{v}$, so that $f_{m} \longrightarrow f$ locally uniformly by the Arzela-Ascoli theorem.
For each $m$ consider the transport line $\gamma_{m}:\RR\to M$ of $f_{m}$ which satisfies
$$
\gamma_m(0) = x_m \qquad \text{and} \qquad \dot{\gamma}_m(0) = \nabla f_{m}(x_m).
$$
Since $\gamma_m(0) \longrightarrow x$ as $m \rightarrow \infty$, we may pass to a subsequence and assume that $\gamma_{m} \longrightarrow \gamma$ for some minimizing geodesic $\gamma:\RR\to M$ with
$$
\gamma(0) = x \qquad \text{and} \qquad \dot{\gamma}(0)=\lim_{m\to\infty}\dot{\gamma}_{m}(0)=\lim_{m\to\infty}\nabla f_{m}(x_{m}).
$$
Lemma \ref{lem_125} states that $\gamma$ is a transport line
of $f$. In particular $\dot{\gamma}(0)=\nabla f(x)$ by (\ref{eq_825}).
\end{proof}

A corollary of Lemma \ref{lem_200} is that any $1$-Lipschitz function that foliates is a $C^1$-function.
These functions are actually $C^{1,1}$-smooth, see \cite[Theorem 2.1.13]{K}.

\section{Directional drift to infinity}

From now on and until the end of Section \ref{sec_no_conj}, our standing assumptions
are the assumptions of Theorem \ref{thm_1217_}.
We thus work in a complete, connected, $n$-dimensional Riemannian manifold $M$, with  $n \geq 2$.
We assume that $\iota: L \rightarrow M$ is an isometric embedding for the discrete set
$$ L \subseteq \RR^n, $$
that satisfies condition (QN1). Translating the discrete set $L$ does not alter the validity of condition (QN1)
or condition (QN2), hence we may translate $L$ and assume for convenience that
$$ 0 \in L.
$$
 For ease of reading, and with a slight abuse of notation, we identify between a point
 $a \in L \subseteq \RR^n$ and its image $\iota(a) \in M$. Thus we think of $L$ as a subset of $M$, and the assumption that $\iota$ is an isometric embedding translates to
\begin{equation}  d(x,y) = |x-y| \qquad \qquad \qquad \text{for all} \ x, y \in L. \label{eq_600} \end{equation}
Note that for  $p \in L \subseteq M$, we may speak of the Euclidean norm $|p| = \sqrt{\sum_i p_i^2} = d(0,p)$ and of
the scalar product $\langle p, v \rangle = \sum_i p_i v_i$ for $v \in \RR^n$.
Given $p \in L$ we write $d_{p}:M\to\RR$ for the function
$$ d_p(x) = d(p, 0) - d(p, x)$$
which is a $1$-Lipschitz function that vanishes at $0 \in L \subseteq M$. The following notion is
 in the spirit of the ``ideal boundary'' of a Hadamard manifold (see, e.g., \cite{BG}).

\begin{definition}
Let $v \in S^{n-1}$ and let $B: M \rightarrow \RR$ be a $1$-Lipschitz function.
We write that $B \in \partial_v M$ if
$$ B(p) = \langle p, v \rangle \qquad \qquad \text{for all} \ p \in L.
$$
\label{def_409}
\end{definition}

We say that a sequence of points $p_m \in \RR^n \ \ (m=1,2,\ldots)$ is {\it drifting in the direction of} $v \in S^{n-1}$,
and we write $ p_m \rightsquigarrow v $, if
\begin{equation}  |p_m| \xrightarrow{m \to \infty} \infty \qquad \text{and} \qquad \frac{p_m}{|p_m|} \xrightarrow{m \to \infty} v. \label{eq_1038}
\end{equation}

\begin{proposition}
Let $v\in S^{n-1}$, $B:M\to\RR$, and let $p_{m} \in L$ satisfy $ p_m \rightsquigarrow v $. Assume that $d_{p_{m}} \longrightarrow B$ pointwise as $m\to\infty$. Then $B\in \partial_v M$.
\label{lem_149}
\end{proposition}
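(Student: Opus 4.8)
The plan is to reduce everything to an elementary Euclidean computation, since the hypotheses force the values of $d_{p_m}$ at points of $L$ to be given by Euclidean distances. Fix $q \in L$; the goal is to show $B(q) = \langle q, v \rangle$, since then $B \in \partial_v M$ by Definition \ref{def_409}. The key observation is that $0, q, p_m$ all lie in $L$, so by the isometry condition \eqref{eq_600} we have $d(p_m, 0) = |p_m|$ and $d(p_m, q) = |p_m - q|$, whence
$$ d_{p_m}(q) = d(p_m, 0) - d(p_m, q) = |p_m| - |p_m - q|. $$
This is a purely Euclidean quantity, and here is the only place where the assumption $0 \in L$ is genuinely used.

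Next I would compute the limit of $|p_m| - |p_m - q|$ as $m \to \infty$ using $p_m \rightsquigarrow v$. Rationalizing,
$$ |p_m| - |p_m - q| = \frac{|p_m|^2 - |p_m - q|^2}{|p_m| + |p_m - q|} = \frac{2 \langle p_m, q \rangle - |q|^2}{|p_m| + |p_m - q|}. $$
Dividing numerator and denominator by $|p_m|$ and using that $|p_m| \to \infty$, that $p_m / |p_m| \to v$ (so $\langle p_m, q \rangle / |p_m| \to \langle v, q \rangle$ and $|q|^2 / |p_m| \to 0$), and that $|p_m - q| / |p_m| \to 1$, I obtain $d_{p_m}(q) \to \langle q, v \rangle$.

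Finally, the hypothesis $d_{p_m} \longrightarrow B$ pointwise gives in particular $d_{p_m}(q) \to B(q)$, so comparing the two limits yields $B(q) = \langle q, v \rangle$. As $q \in L$ was arbitrary, this is exactly the condition $B \in \partial_v M$. I do not expect any real obstacle in this argument; the only point requiring a moment's care is to verify that every distance appearing is a distance between two points of $L$, so that \eqref{eq_600} applies — and in fact one does not even need the full pointwise convergence $d_{p_m} \to B$ on all of $M$, only convergence at the points of $L$.
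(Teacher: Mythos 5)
Your proof is essentially identical to the paper's: use \eqref{eq_600} to rewrite $d_{p_m}(q)$ as the Euclidean quantity $|p_m|-|p_m-q|$, rationalize, and take the limit using $p_m/|p_m|\to v$. The one small thing you omit is that Definition \ref{def_409} requires $B$ to be $1$-Lipschitz; the paper notes this holds because $B$ is a pointwise limit of $1$-Lipschitz functions, and this also slightly undercuts your closing remark that convergence at points of $L$ alone would suffice (with only that, the $1$-Lipschitzness of $B$ would have to be assumed separately).
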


\begin{proof} The function $B$ is $1$-Lipschitz, being the pointwise limit of a sequence of $1$-Lipschitz functions.
By (\ref{eq_600}), for any $q \in L$,
$$	B(q)=\lim_{m\to\infty}d_{p_{m}}(q)=\lim_{m\to\infty}[d(p_{m},0)-d(p_{m},q)]=\lim_{m\to\infty}[\vert p_{m}\vert-\vert p_{m}-q\vert]=\langle q,v\rangle,
$$
where in the last passage we used the following computation in Euclidean geometry:
$$ \lim_{m\to\infty}[\vert p_{m}\vert-\vert p_{m}-q\vert]  = \lim_{m \to \infty} \frac{2 \langle p_m, q \rangle - |q|^2}{|p_m| + |p_m-q|} =
\lim_{m \to \infty} \left \langle \frac{p_m}{|p_m|}, q \right \rangle = \langle v, q \rangle. $$
 Thus $B\in \partial_v M$.
\end{proof}

Let $p_m \in \RR^n \ \ (m=1,2,\ldots)$ be a sequence of points and let $v \in S^{n-1}$. We say that $p_m \rightsquigarrow v$ {\it narrowly}
if
\begin{equation}   |p_m| \xrightarrow{m \rightarrow \infty} \infty  \qquad \text{and} \qquad |p_m| - \langle p_m, v \rangle \xrightarrow{m \rightarrow \infty}  0. \label{eq_151}
\end{equation}
Clearly (\ref{eq_151}) implies (\ref{eq_1038}).  Two properties of narrow drift are summarized in the following:

\begin{lemma}
For any $v \in S^{n-1}$ there exists a sequence $(p_m)_{m \geq 1}$ in $L$ with $p_m \rightsquigarrow v$ narrowly.
Moreover, for any such sequence and for any $p \in L$, also $p_m - p \rightsquigarrow v$ narrowly.
\label{lem_121}
\end{lemma}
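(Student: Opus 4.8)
The plan is to establish the two assertions separately, both via elementary Euclidean estimates built on condition (QN1); condition (QN2) is not needed here.

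For the existence claim, fix $v \in S^{n-1}$ and pick an orthogonal map $A$ with $A e_1 = v$. For each $m \geq 1$ I would apply (QN1) to the isometry $T_m : z \mapsto A z + m v$, producing a point $p_m \in L \cap T_m(\subgraph_n(\varphi))$. Unwinding the definition of the subgraph, $p_m = s_m v + w_m$ with $s_m > m$, $w_m \perp v$, and $|w_m| \leq \varphi(s_m - m)$. Then $|p_m| \geq s_m > m \to \infty$ and $\langle p_m, v\rangle = s_m$, so
$$ 0 \leq |p_m| - \langle p_m, v \rangle = \sqrt{s_m^2 + |w_m|^2} - s_m \leq \frac{|w_m|^2}{2 s_m} \leq \frac{\varphi(s_m - m)^2}{2 s_m}. $$
The remaining point is that this last quantity tends to $0$: writing $u_m = s_m - m > 0$, if $u_m \to \infty$ (along a subsequence) then $\varphi(u_m)^2/(2 s_m) \leq \varphi(u_m)^2/(2 u_m) \to 0$ by the hypothesis $\varphi(r)/\sqrt{r} \to 0$, whereas if $u_m$ stays bounded, say $u_m \leq U$, then the numerator is at most $\varphi(U)^2$ (using that $\varphi$ is non-decreasing) while $s_m \geq m \to \infty$. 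In either case the bound goes to zero, so $p_m \rightsquigarrow v$ narrowly.

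For the second assertion, let $p_m \rightsquigarrow v$ narrowly and fix $p \in \RR^n$ (membership in $L$ plays no role). Immediately $|p_m - p| \geq |p_m| - |p| \to \infty$. For the narrowness condition I would use the decomposition
$$ |p_m - p| - \langle p_m - p, v \rangle = \bigl( |p_m - p| - |p_m| \bigr) + \bigl( |p_m| - \langle p_m, v\rangle \bigr) + \langle p, v\rangle, $$
in which the middle bracket tends to $0$ by hypothesis. Since narrow drift implies $p_m / |p_m| \to v$, rationalizing the first bracket as $|p_m - p| - |p_m| = (|p|^2 - 2\langle p_m, p\rangle)/(|p_m - p| + |p_m|)$ and dividing numerator and denominator by $|p_m|$ shows that this bracket tends to $-\langle v, p\rangle$, which cancels the last term. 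Hence $|p_m - p| - \langle p_m - p, v\rangle \to 0$ and $p_m - p \rightsquigarrow v$ narrowly.

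There is no serious obstacle in this lemma; the only point requiring a little care is that (QN1) controls $w_m$ only within distance $\varphi(s_m - m)$ of the axis through $v$, with $s_m - m$ not bounded below, which is why the monotonicity of $\varphi$ — and not merely its growth rate — is invoked in the estimate above.
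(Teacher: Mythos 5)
Your proof is correct, and the central construction is the same as the paper's: apply (QN1) to the one-parameter family of isometries $T_m(z) = Az + mv$ to produce $p_m \in L \cap T_m(\subgraph_n(\varphi))$, then estimate. The remaining differences are in how the Euclidean algebra is organized. In the existence part, your two-case analysis (subsequences with $u_m \to \infty$ versus $u_m$ bounded) can be replaced by a single line using the monotonicity of $\varphi$ exactly where the paper uses it: since $u_m = s_m - m \leq s_m$ and $\varphi$ is non-decreasing, $\varphi(u_m)^2/(2s_m) \leq \varphi(s_m)^2/(2s_m) \to 0$ because $s_m > m \to \infty$; this sidesteps the need to pass to subsequences at all. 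For the ``moreover'' part, the paper first converts the narrowness condition to the equivalent statement $|p_m - r_m v| = o(\sqrt{r_m})$ (their equation $\frac{|p_m - r_m v|}{\sqrt{r_m}} = \sqrt{2(r_m - \langle p_m, v\rangle)}$), and then the claim for $\tilde p_m = p_m - p$ becomes a one-line triangle inequality $|\tilde p_m - s_m v| \leq |p_m - r_m v| + 2|p|$. You instead work directly with the scalar quantity $|p_m - p| - \langle p_m - p, v\rangle$, split it into three terms, and show the first and third cancel in the limit via a rationalization; this is a bit longer but entirely self-contained and avoids introducing the $o(\sqrt{r})$ reformulation. Both routes are elementary and fine; the paper's has the small advantage that the same reformulation is reused for both halves of the lemma.
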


\begin{proof}
Let $(p_m)_{m \geq 1}$ be a sequence in $L$ with $|p_m| \longrightarrow \infty$, and write $r_m = |p_m|$. Since
for any $v \in S^{n-1}$, $$
\frac{\vert p_{m}-r_{m}v\vert}{\sqrt{r_{m}}}=\sqrt{2(r_{m}-\langle p_{m},v\rangle)},
$$
condition (\ref{eq_151}) is equivalent to
\begin{equation}
\frac{|p_m - r_m v |}{\sqrt{r_m}} \xrightarrow{m \rightarrow \infty} 0.
\label{eq_1106} \end{equation}
We thus need to find $p_m \in L$  with $p_m \longrightarrow \infty$ such that (\ref{eq_1106}) holds true.
Since $L \subseteq \RR^n$ satisfies (QN1), there exists a non-decreasing function $\vphi: (0, \infty) \rightarrow (0, \infty)$ with
$\vphi(r) = o(\sqrt{r})$ as $ r \rightarrow \infty$ such that for any isometry $T: \RR^n \rightarrow \RR^n$,
\begin{equation}  L \cap T(\subgraph_n(\vphi)) \neq \emptyset. \label{eq_432} \end{equation}
Let $U: \RR^n \rightarrow \RR^n$ be a linear orthogonal transformation that maps the standard unit vector $e_1$ to the unit vector $v$,
and let $T_m( x) = U(x) + m \cdot v$ be an isometry of $\RR^n$. By applying (\ref{eq_432}) we conclude
that for any $m \geq 1$ there exists a point $p_m \in L$ such that $q_m = p_m - m \cdot v$ satisfies $\langle q_m, v \rangle > 0$ and
\begin{equation}  |\Proj_{v^{\perp}} q_m| \leq \vphi( \langle q_m, v \rangle ), \label{eq_417} \end{equation}
where the linear map $\Proj_{v^{\perp}}: \RR^n \rightarrow \RR^n$ is the orthogonal projection on the hyperplane orthogonal to $v$ in $\RR^n$.
Note that $p_m \longrightarrow \infty$ since $|p_m| \geq \langle p_m, v \rangle \geq m$. As $\vphi$ is non-decreasing, according to (\ref{eq_417}),
$$  |\Proj_{v^{\perp}} p_m| = |\Proj_{v^{\perp}} q_m| \leq \vphi( \langle q_m, v \rangle )
\leq \vphi( \langle p_m, v \rangle ) = o( \sqrt{\langle p_m, v \rangle} ) = o(\sqrt {r_m}),  $$
as $r_m = |p_m|$. Since $r_m \leq |\Proj_{v^{\perp}} p_m| + |\langle p_m, v \rangle| = \langle p_m, v \rangle + o( \sqrt {r_m})$,
$$ |p_m - r_m v| \leq |\Proj_{v^{\perp}} p_m| + |\langle p_m, v \rangle - r_m|  = o(\sqrt {r_m}) + (r_m - \langle p_m, v \rangle) = o(\sqrt{r_m}), $$
proving (\ref{eq_1106}). Moreover, given any sequence $p_m \rightsquigarrow v$ narrowly, it follows from (\ref{eq_1106}) that for $\tilde{p}_m = p_m - p,
r_m = |p_m|$ and $s_m = |\tilde{p}_m|$,
$$ |\tilde{p}_m - s_m v| \leq |p_m - r_m v|  + 2 |p| = o(\sqrt{r_m}) + 2 |p| = o(\sqrt{s_m}) $$
and hence $\tilde{p}_m \rightsquigarrow v$ narrowly as well.
\end{proof}

Assumption (QN1) in Theorem \ref{thm_1217} and in Theorem \ref{thm_1217_}
may actually be replaced by assumption (QN1'), which is the condition
that for any $v \in S^{n-1}$ there exists a sequence $(p_m)_{m \geq 1}$ in $L$ with $p_m \rightsquigarrow v$ narrowly.
We also note here that neither (QN1) implies (QN2) nor (QN2) implies (QN1).

\begin{lemma}
The set $\partial_{v}M$ is non-empty for any $v\in S^{n-1}$. In fact, for any sequence $L \ni p_{m} \rightsquigarrow v$ there exists a subsequence such that $d_{p_{m_{k}}} \longrightarrow B$ for some $B \in \partial_{v} M$.
\label{lem_1045}
\end{lemma}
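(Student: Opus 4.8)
The plan is to combine the Arzel\`a--Ascoli theorem with Proposition \ref{lem_149} and Lemma \ref{lem_121}, so that essentially no new geometric input is required; everything has already been set up.

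First I would record the relevant compactness. For each $m$ the function $d_{p_m} = d(p_m, 0) - d(p_m, \cdot)$ is $1$-Lipschitz and satisfies $d_{p_m}(0) = 0$, so that
$$ |d_{p_m}(x)| = |d(p_m,0) - d(p_m,x)| \leq d(0,x) \qquad \text{for all } x \in M. $$
Hence the sequence $(d_{p_m})_{m \geq 1}$ is equicontinuous and uniformly bounded on every compact subset of $M$. Since $M$ is a connected Riemannian manifold it is separable, so a diagonalization over a countable dense subset of $M$, together with the Arzel\`a--Ascoli theorem, produces a subsequence $(d_{p_{m_k}})_{k \geq 1}$ converging pointwise --- in fact locally uniformly --- to a function $B : M \to \RR$, which is then automatically $1$-Lipschitz.

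Next I would invoke Proposition \ref{lem_149}. Given an arbitrary sequence $L \ni p_m \rightsquigarrow v$, the subsequence $(p_{m_k})_{k \geq 1}$ extracted above still satisfies $p_{m_k} \rightsquigarrow v$, and we have arranged $d_{p_{m_k}} \longrightarrow B$ pointwise. Proposition \ref{lem_149} therefore gives $B \in \partial_v M$, which is the second assertion of the lemma. For the first assertion, Lemma \ref{lem_121} supplies a sequence $(p_m)_{m\geq 1}$ in $L$ with $p_m \rightsquigarrow v$ (narrowly, hence in particular $p_m \rightsquigarrow v$); applying the second assertion to this sequence produces an element of $\partial_v M$, so $\partial_v M \neq \emptyset$.

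There is no genuine obstacle here. The only point needing (minor) attention is the extraction of a pointwise-convergent subsequence, i.e.\ verifying the hypotheses of Arzel\`a--Ascoli --- which follow from the uniform Lipschitz bound together with the normalization $d_{p_m}(0) = 0$ --- and recalling that $M$ is separable so that the diagonal argument is legitimate.
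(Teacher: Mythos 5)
Your proof is correct and follows essentially the same route as the paper's: extract a locally uniformly convergent subsequence via Arzel\`a--Ascoli (using the uniform Lipschitz bound and the normalization $d_{p_m}(0)=0$), then apply Proposition \ref{lem_149} to identify the limit as an element of $\partial_v M$, with Lemma \ref{lem_121} supplying the existence of a drifting sequence for non-emptiness. The only difference is that you spell out the separability/diagonalization details underlying Arzel\`a--Ascoli, which the paper leaves implicit.
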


\begin{proof} It follows from Lemma \ref{lem_121} that there exist points $L \ni p_m \rightsquigarrow v$.
The sequence $(d_{p_{m}})_{m \geq 1}$ consists of $1$-Lipschitz
functions vanishing at $0$. By the Arzela-Ascoli theorem, there
exists a subsequence $(d_{p_{m_{k}}})_{k \geq 1}$ such that $d_{p_{m_{k}}} \longrightarrow B$
locally uniformly for some $1$-Lipschitz function $B:M\to\mathbb{R}$.
By Proposition \ref{lem_149}, we have that $B\in\partial_{v}M$.
\end{proof}

From Definition \ref{def_409} it follows that for any $v \in S^{n-1}$,
\begin{equation}  \partial_{-v} M = -\partial_v M := \{ -B \, ; \, B \in \partial_v M \}. \label{eq_639}
\end{equation}
The next proposition is the reason for introducing the notion of a narrow drift. It produces complete minimizing
geodesics through points of $L$ that interact nicely with $\partial_v M$.

\begin{proposition}
Let $p \in L, v \in S^{n-1}$ and assume that $(p_m^+)_{m \geq 1}$ is a sequence in $L$ with $p_m^+ \rightsquigarrow v$ narrowly,
while $(p^-_m)_{m \geq 1}$ is a sequence in $L$ satisfying $p^-_m \rightsquigarrow -v$ narrowly.

\medskip For any $m$, let $\gamma^{\pm}_m$ be a minimizing geodesic connecting $p$ and $p_m^{\pm}$.
Assume that $\lim_m \gamma^{\pm}_m= \gamma^{\pm}$
for a geodesic ray $\gamma^{\pm}: [0, \infty) \rightarrow M$ with $\gamma^{\pm}(0)=  p$. Then the concatenation
$\gamma = \gamma^+ \cup \gamma^-$ with parametrization
\begin{equation}  \gamma(t) = \left \{ \begin{array}{cc} \gamma^+(t) & t \geq 0 \\
\gamma^-(-t) & t \leq 0 \end{array} \right. \label{eq_700} \end{equation}
 is a transport line of $B$, for any $B \in \partial_v M$.
\label{lem_454a}
\end{proposition}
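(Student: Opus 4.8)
The plan is to prove that $B$ increases at unit speed along $\gamma^+$ and decreases at unit speed along $\gamma^-$, and then to read off the transport‑line property of the concatenation $\gamma$ directly. The observation to make at the outset is that one should \emph{not} try to argue that $d_{p_m^+}\longrightarrow B$: by Lemma \ref{lem_1045} such a limit is only guaranteed to lie in $\partial_v M$, which may contain more than one function. Instead, the narrow‑drift hypothesis will be used to show that $B$ almost saturates its Lipschitz bound between the endpoints $p$ and $p_m^{\pm}$, and this information transfers to every sub‑segment and survives passage to the limit.

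First I would record the endpoint estimate. Since $p,p_m^+\in L$ and $B\in\partial_v M$, Definition \ref{def_409} gives $B(p_m^+)-B(p)=\langle p_m^+-p,v\rangle$, while (\ref{eq_600}) gives $d(p,p_m^+)=|p_m^+-p|$. By the second assertion of Lemma \ref{lem_121}, the sequence $p_m^+-p$ drifts towards $v$ narrowly, so $\eps_m^+:=|p_m^+-p|-\langle p_m^+-p,v\rangle\longrightarrow 0$, and therefore $B(p_m^+)-B(p)=d(p,p_m^+)-\eps_m^+$. Symmetrically, using $p_m^--p\rightsquigarrow -v$ narrowly together with (\ref{eq_639}), one gets $B(p)-B(p_m^-)=d(p,p_m^-)-\eps_m^-$ with $\eps_m^-\longrightarrow 0$.

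Next I would propagate the estimate to sub‑segments and pass to the limit. Let $\gamma_m^+:[0,L_m]\to M$ be the given minimizing geodesic with $\gamma_m^+(0)=p$ and $\gamma_m^+(L_m)=p_m^+$, so $L_m=d(p,p_m^+)$. For $0\le s\le t\le L_m$, decompose $B(p_m^+)-B(p)$ into the three increments of $B\circ\gamma_m^+$ over $[0,s]$, $[s,t]$ and $[t,L_m]$; bounding the outer two by $s$ and $L_m-t$ via $1$‑Lipschitzness yields $t-s-\eps_m^+\le B(\gamma_m^+(t))-B(\gamma_m^+(s))\le t-s$, the upper bound being $1$‑Lipschitzness once more. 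The same holds for $\gamma_m^-$. Since $|p_m^{\pm}|\to\infty$ we have the lengths tending to infinity, so for any fixed $s,t\ge 0$ the points $\gamma_m^{\pm}(s),\gamma_m^{\pm}(t)$ are defined for large $m$ and converge to $\gamma^{\pm}(s),\gamma^{\pm}(t)$ by the definition of convergence of geodesics; letting $m\to\infty$ and using the continuity of $B$ and $\eps_m^{\pm}\to 0$ gives $B(\gamma^+(t))-B(\gamma^+(s))=t-s$ and $B(\gamma^-(s))-B(\gamma^-(t))=t-s$ for all $s,t\ge 0$. In particular $B(\gamma^+(t))=B(p)+t$ and $B(\gamma^-(t))=B(p)-t$ for $t\ge 0$, so by the parametrization (\ref{eq_700}) we obtain $B(\gamma(t))=B(p)+t$ for every $t\in\RR$; hence $B(\gamma(t))-B(\gamma(s))=t-s$ for all $s,t\in\RR$.

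Finally I would check that $\gamma$ is a minimizing geodesic on all of $\RR$, which together with the previous identity makes it a transport line of $B$. From the $1$‑Lipschitzness of $B$ we get $d(\gamma(s),\gamma(t))\ge|B(\gamma(s))-B(\gamma(t))|=|t-s|$ for all $s,t$. Conversely, $d(\gamma(s),\gamma(t))=|t-s|$ whenever $s$ and $t$ share a sign, since $\gamma^+$ and $\gamma^-$ are minimizing geodesic rays, and when $s<0<t$ the triangle inequality through $\gamma(0)=p$ gives $d(\gamma(s),\gamma(t))\le|s|+|t|=|t-s|$. So $d(\gamma(s),\gamma(t))=|t-s|$ for all $s,t\in\RR$, as required. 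I expect no serious obstacle here; the one subtlety worth flagging is the one above — that one must work with the prescribed $B$ rather than with a limit of the $d_{p_m^+}$ — and it is resolved by the endpoint estimate plus the elementary fact that near‑saturation of a Lipschitz bound between the two endpoints forces near‑saturation on every sub‑interval.
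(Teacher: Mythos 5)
Your proof is correct and takes essentially the same approach as the paper: use the narrow-drift hypothesis together with Lemma~\ref{lem_121} to get near-saturation of the Lipschitz bound of $B$ between the endpoints $p$ and $p_m^{\pm}$, propagate this to sub-segments of $\gamma_m^{\pm}$ via $1$-Lipschitzness, and pass to the limit. The only differences are presentational: the paper establishes the increment identity only from the base point $p$ (taking $s=0$), and leaves the verification that $\gamma$ is a minimizing geodesic implicit, since it follows automatically from the remark after~(\ref{eq_1003}) once the transport equality~(\ref{eq_1520}) is in hand.
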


\begin{proof}  Set $r_m^{\pm} = |p_m^\pm - p|$.
Then $r_m^{\pm} \longrightarrow \infty$  by (\ref{eq_151}).
We parametrize our geodesics as
$\gamma_m^{\pm}: [0, r_m^{\pm}] \rightarrow M$ with
$$ \gamma_m^{\pm}(0) = p \qquad \text{and} \qquad \gamma_m^{\pm}(r_m^{\pm}) = p_m^\pm. $$
By our assumption, $\gamma_m^{\pm} \longrightarrow \gamma^{\pm}$
where $\gamma^{\pm}: [0, \infty) \rightarrow M$
is a geodesic with $\gamma^{\pm}(0) = p$. Fix $t > 0$ and $B \in \partial_v M$. In order to show that $\gamma$, as defined in (\ref{eq_700}), is a transport line of $B$, it suffices to show that
\begin{equation}
B(\gamma^+(t)) - B(\gamma^+(0)) = B(\gamma^-(0)) - B(\gamma^-(t)) = t.
\label{eq_1520} \end{equation}
Since $p_m^{\pm} \rightsquigarrow \pm v$  narrowly, it follows from Lemma \ref{lem_121} that $p_m^{\pm} - p \rightsquigarrow \pm v$  narrowly as well, i.e.
$$ |p_m^{\pm} - p| - \langle p_m - p, \pm v \rangle \xrightarrow{ m \to \infty} 0. $$
For a sufficiently large $m$, we know that $t \leq \min \{ r_m^+, r_m^- \}$. Since $B$ is $1$-Lipschitz, by (\ref{eq_10025}),
\begin{align} \label{eq_423_}
 B(\gamma_{m}^+(t))-B(\gamma_{m}^+(0))-t & \geq B(\gamma_{m}^+(r_m^+))-B(\gamma_{m}^+(0)) - r_m^+
\\& = B(p_m^+) - B(p) - r_m^+
=\langle p_m^+ - p ,v\rangle-\vert p_{m}^+- p \vert\xrightarrow{m \to \infty}  0. \nonumber
\end{align}
Similarly, 
\begin{align*}
-B(\gamma^-_{m}(t))+B(\gamma^-_{m}(0))-t &\geq -B(\gamma^-_{m}(r^-_m))+B(\gamma^-_{m}(0)) - r^-_m
\\&
=\langle p_m^- - p ,-v\rangle-\vert p^-_{m}- p \vert\xrightarrow{m \to \infty}  0.
\end{align*}
Since $\gamma_{m}^{\pm} \longrightarrow \gamma^{\pm}$, by taking the limit $m\to\infty$
we obtain
$$
B(\gamma^+(t))-B(\gamma^+(0))- t\geq 0
\qquad \text{and} \qquad -B(\gamma^-(t))+B(\gamma^-(0))- t\geq 0.
$$
The reverse inequalities are trivial by (\ref{eq_10025}), and hence (\ref{eq_1520}) follows.
\end{proof}

\begin{lemma} Let $p \in L$ and $v \in S^{n-1}$.
Let $(p_m)_{m \geq 1}$ and $(q_m)_{m \geq 1}$ be sequences in $L$ such that $p_m \rightsquigarrow v$ and $q_m \rightsquigarrow v$ with
at least one of the drifts being narrow. Let $\gamma_m$ be a minimizing geodesic from $p$ to $p_m$
and let $\eta_m$ be a minimizing geodesic from $p$ to $q_m$. Then there exists a geodesic ray $\gamma$ with $\gamma(0) = p$ such that both
$ \gamma_m \longrightarrow \gamma$ and $\eta_m \longrightarrow \gamma$.
\label{lem_454}
\end{lemma}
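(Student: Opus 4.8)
The strategy is to show that all subsequential limits of $(\gamma_m)$ and of $(\eta_m)$ coincide with one and the same geodesic ray $\gamma$ emanating from $p$. Since $\gamma_m(0) = \eta_m(0) = p$ and the initial velocities $\dot\gamma_m(0), \dot\eta_m(0)$ lie in the compact unit sphere of $T_p M$, this suffices to conclude that $\gamma_m \to \gamma$ and $\eta_m \to \gamma$. Swapping the roles of $(p_m, \gamma_m)$ and $(q_m, \eta_m)$ if necessary, I may and do assume that $p_m \rightsquigarrow v$ narrowly; this affects neither the hypotheses nor the conclusion.

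First I would produce a transport line through $p$. By Lemma \ref{lem_121} pick a sequence $(p_m^-)$ in $L$ with $p_m^- \rightsquigarrow -v$ narrowly, and let $\sigma_m$ be a minimizing geodesic from $p$ to $p_m^-$. Passing to a subsequence, $\gamma_m \to \gamma^+$ and $\sigma_m \to \gamma^-$ for geodesic rays $\gamma^\pm \colon [0,\infty) \to M$ with $\gamma^\pm(0) = p$; these are genuine rays because $|p_m - p| \to \infty$ and $|p_m^- - p| \to \infty$. Proposition \ref{lem_454a} then tells us that the concatenation $\ell := \gamma^+ \cup \gamma^-$ is a transport line of $B$ for \emph{every} $B \in \partial_v M$ — and $\partial_v M \neq \emptyset$ by Lemma \ref{lem_1045}. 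Recall that $\ell$, being a transport line, is in particular an injective minimizing geodesic.

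Next I would pin down $(\gamma_m)$. Let $\tilde\gamma^+$ be any subsequential limit of $(\gamma_m)$, a minimizing geodesic ray with $\tilde\gamma^+(0) = p$, and fix $B \in \partial_v M$. Since $t \mapsto B(\gamma_m(t)) - t$ is non-increasing and $B(\gamma_m(|p_m - p|)) - B(p) - |p_m - p| = \langle p_m - p, v \rangle - |p_m - p| \to 0$ by the narrow drift $p_m - p \rightsquigarrow v$ (Lemma \ref{lem_121}), passing to the limit gives $B(\tilde\gamma^+(t)) - B(p) \geq t$ for every $t \geq 0$; the reverse inequality is the Lipschitz bound, so $\tilde\gamma^+$ is a transport ray of $B$. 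Now $|B(\tilde\gamma^+(t)) - B(p)| = t = d(p, \tilde\gamma^+(t))$, so Corollary \ref{cor_230} forces $\tilde\gamma^+(t) \in \ell$ for all $t$; since $\ell$ is an injective unit-speed geodesic whose image contains that of the unit-speed geodesic $\tilde\gamma^+$ with $\tilde\gamma^+(0) = \ell(0) = p$, we get $\tilde\gamma^+(t) = \ell(\pm t)$, and the sign $-t$ is impossible for $t > 0$ because $B(\tilde\gamma^+(t)) = B(p) + t$ while $B(\ell(-t)) = B(p) - t$. Hence $\tilde\gamma^+ = \gamma^+$, so $\gamma_m \to \gamma := \gamma^+$.

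The crux is the sequence $(\eta_m)$, because $(q_m)$ is only assumed to drift, not to drift narrowly, so the discrepancy $|q_m - p| - \langle q_m - p, v\rangle$ need not vanish and the argument of the previous paragraph breaks down. Instead I would exploit that every minimizing geodesic ending at $q_m$ is a transport curve of the distance-type function $d_{q_m}$. Given a subsequential limit $\eta^+$ of $(\eta_m)$ — automatically a minimizing geodesic ray from $p$, defined on $[0,\infty)$ — pass to a further subsequence along which $d_{q_m} \to B$ for some $B \in \partial_v M$; this relies only on ordinary drift, through Arzela-Ascoli and Proposition \ref{lem_149}. Lemma \ref{lem_125} then upgrades $\eta^+$ to a transport curve, hence a transport ray, of this $B$. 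Since by the second step the line $\ell$ is a transport line of this same $B$ and $p \in \ell \cap \eta^+$, Lemma \ref{lem_1005} gives $\eta^+ \subseteq \ell$; arguing with parametrizations exactly as above yields $\eta^+ = \gamma^+ = \gamma$. Therefore $\eta_m \to \gamma$ as well, which completes the proof.
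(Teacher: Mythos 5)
Your proof is correct and follows essentially the same route as the paper: you build the transport line $\ell$ via Proposition \ref{lem_454a}, extract $B\in\partial_v M$ as a limit of $d_{q_m}$ via Lemma \ref{lem_1045}, upgrade a subsequential limit of $(\eta_m)$ to a transport ray of $B$ via Lemma \ref{lem_125}, and force it into $\ell$ via Lemma \ref{lem_1005} (or equivalently Corollary \ref{cor_230}). The only cosmetic difference is your step devoted to $(\gamma_m)$: the paper gets this for free since its $\gamma$ is by construction the forward half of the transport line $\hat\gamma$, whereas you re-derive the transport-ray property of $\tilde\gamma^+$ by hand (essentially unrolling the estimate inside Proposition \ref{lem_454a}); this is harmless but could be replaced by the same $d_{p_m}\to B'$ argument you use for $(\eta_m)$.
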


\begin{proof} Passing to convergent subsequences, we may assume that $\gamma_m \longrightarrow \gamma$
and $\eta_m \longrightarrow \eta$ for some geodesic rays $\gamma$ and $\eta$ with $\gamma(0) = \eta(0) = p$, and our goal is to prove that $\gamma \equiv \eta$.

\medskip Assume that the drift $p_m \rightsquigarrow v$ is narrow.
By Lemma \ref{lem_1045}, we may pass to a subsequence, and assume that $d_{q_m} \longrightarrow B$ for a certain $ B \in \partial_v M. $
Since $\eta_m$ is a minimizing geodesic connecting $p$ and $q_m$,
it is a transport curve of $d_{q_m}$. Since $d_{q_m} \longrightarrow B$ and $\eta_m \longrightarrow \eta$,
Lemma \ref{lem_125} implies that the geodesic ray $\eta$ is a transport ray of $B$.

\medskip
According to Lemma \ref{lem_121}, there exists a sequence $(\tilde{p}_m)_{m \geq 1}$ in $L$ with $\tilde{p}_m \rightsquigarrow -v$
narrowly. Passing to a subsequence, we may assume that
$\tilde{\gamma}_m$, a minimizing geodesic from $p$ to $\tilde{p}_m$, converges as $m \rightarrow \infty$
to a geodesic ray $\tilde{\gamma}$ with $\tilde{\gamma}(0) = p$. Recall that the drift $p_m \rightsquigarrow v$ is narrow and
that $\gamma_m \longrightarrow \gamma$.
By Proposition \ref{lem_454a}, the concatenation $\hat{\gamma} = \gamma \cup \tilde{\gamma}$
is a transport line of $B$ with the parametrization
\begin{equation}  \hat{\gamma}(t) = \left \{ \begin{array}{cc} \gamma(t) & t \geq 0 \\
\tilde{\gamma}(-t) & t \leq 0 \end{array} \right. \label{eq_700_} \end{equation}
Thus $\hat{\gamma}$ is a transport line of $B$ and  $\eta$ is a transport ray of
$B$, both passing through the point $p$. Lemma \ref{lem_1005} implies that
\begin{equation}  \eta \subseteq \hat{\gamma} = \gamma \cup \tilde{\gamma}. \label{eq_710_} \end{equation}
The three curves $\eta, \gamma$ and $\tilde{\gamma}$ are geodesic rays emanating from $p$.
It thus follows from (\ref{eq_710_}) that either $\eta = \gamma$ or else $\eta = \tilde{\gamma}$.
However, $\eta: [0, \infty) \rightarrow M$ and $\gamma: [0, \infty) \rightarrow M$ are transport rays of $B$ unlike $\tilde{\gamma}: [0, \infty) \rightarrow M$, as follows from (\ref{eq_700_}), hence $\eta \equiv \gamma$.
\end{proof}

We write $S_x M = \{ u \in T_x M \, ; \, |u| = 1 \}$ for the unit tangent sphere at the point $x \in M$.

\begin{proposition}
Fix $p \in L$. Then with any $v \in S^{n-1}$ there is a unique way to associate
a complete minimizing geodesic $\gamma_{p,v}: \RR \rightarrow M$ with $\gamma_{p,v}(0) = p$ such that the following hold:
\begin{enumerate}
\item[(i)] For any $v \in S^{n-1}$, if $(p_m)_{m \geq 1}$ is a sequence in $L$ with $p_m \rightsquigarrow v$, and $\gamma_m$
is a minimizing geodesic from $p$ to $p_m$, then $\gamma_m$ tends to the geodesic ray $\gamma_{p,v}([0, \infty))$ as $m \rightarrow \infty$.
\item[(ii)] The map $S^{n-1} \ni v \mapsto \dot{\gamma}_{p,v}(0) \in S_p M$ is odd, continuous and onto.
\item[(iii)] For any $v \in S^{n-1}$ and $B \in \partial_v M$, the minimizing geodesic $\gamma_{p,v}$ is a transport line of $B$.
\end{enumerate}
\label{prop_1155}
\end{proposition}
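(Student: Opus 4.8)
The plan is to build $\gamma_{p,v}$ by gluing two geodesic rays obtained as limits of minimizing geodesics joining $p$ to points of $L$ that drift narrowly in the directions $v$ and $-v$, and then to read off (i)--(iii) from Proposition~\ref{lem_454a} and Lemma~\ref{lem_454}. Using Lemma~\ref{lem_121}, fix sequences $(p^+_m)_{m\ge1}$ and $(p^-_m)_{m\ge1}$ in $L$ with $p^+_m\rightsquigarrow v$ and $p^-_m\rightsquigarrow -v$ narrowly, and choose minimizing geodesics $\gamma^\pm_m$ from $p$ to $p^\pm_m$. Applying Lemma~\ref{lem_454} with both sequences taken equal to $(p^+_m)$ and both families of geodesics equal to $(\gamma^+_m)$ shows that $\gamma^+_m$ converges to a geodesic ray $\gamma^+\colon[0,\infty)\to M$ with $\gamma^+(0)=p$; likewise $\gamma^-_m\to\gamma^-$. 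Set $\gamma_{p,v}=\gamma^+\cup\gamma^-$ with the parametrization (\ref{eq_700}). Since $\partial_v M\ne\emptyset$ by Lemma~\ref{lem_1045}, Proposition~\ref{lem_454a} shows that $\gamma_{p,v}$ is a transport line of \emph{every} $B\in\partial_v M$; as transport lines are minimizing geodesics with domain all of $\RR$, this already shows $\gamma_{p,v}$ is a complete minimizing geodesic through $p$, proves (iii), and gives $\dot\gamma_{p,v}(0)=\dot\gamma^+(0)$.

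For (i), let $(q_m)_{m\ge1}$ be \emph{any} sequence in $L$ with $q_m\rightsquigarrow v$ and let $\eta_m$ be minimizing geodesics from $p$ to $q_m$. Applying Lemma~\ref{lem_454} to the pair $(p^+_m)$, $(q_m)$ --- the drift of $(p^+_m)$ being narrow --- with the geodesics $\gamma^+_m$, $\eta_m$, we obtain a single geodesic ray that is the common limit of $\gamma^+_m$ and $\eta_m$; since $\gamma^+_m\to\gamma^+$, this limit is $\gamma^+=\gamma_{p,v}([0,\infty))$, so $\eta_m\to\gamma_{p,v}([0,\infty))$, which is (i). The same argument, comparing two narrow defining sequences (or two choices of minimizing geodesics for one such sequence), shows that $\gamma^+$, and likewise $\gamma^-$, does not depend on the choices made, so the construction is unambiguous. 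Uniqueness of the association is then immediate: by Lemma~\ref{lem_121} there exist sequences in $L$ drifting to $v$, so property (i) is non-vacuous and forces $\dot\gamma_{p,v}(0)$ to be the common limit of the initial velocities $\dot\eta_m(0)$; since a complete geodesic is determined by its initial point and velocity, any complete minimizing geodesic satisfying (i) equals $\gamma_{p,v}$.

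It remains to prove (ii). Running the construction for $-v$ with $(p^-_m)$ playing the role of the positive sequence and $(p^+_m)$ the negative one produces the reparametrized geodesic $t\mapsto\gamma_{p,v}(-t)$, which by uniqueness must be $\gamma_{p,-v}$; hence $\dot\gamma_{p,-v}(0)=-\dot\gamma_{p,v}(0)$ and the map $v\mapsto\dot\gamma_{p,v}(0)$ is odd. For continuity, given $v_k\to v$ in $S^{n-1}$, I would invoke (i) for each direction $v_k$ together with Lemma~\ref{lem_121} to choose a point $q_k\in L$ lying far out nearly in the direction $v_k$ and a minimizing geodesic $\eta_k$ from $p$ to $q_k$ with $d\big(\dot\eta_k(0),\dot\gamma_{p,v_k}(0)\big)<1/k$; one checks directly that $q_k\rightsquigarrow v$, so (i) applied to the direction $v$ yields $\dot\eta_k(0)\to\dot\gamma_{p,v}(0)$, and the triangle inequality in $T_pM$ then forces $\dot\gamma_{p,v_k}(0)\to\dot\gamma_{p,v}(0)$. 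Finally, surjectivity follows because a continuous odd map between the $(n-1)$-spheres $S^{n-1}$ and $S_pM$ must be onto, by the Borsuk--Ulam theorem (an odd continuous self-map of a sphere has odd, hence nonzero, degree). I expect the diagonal selection inside the continuity argument --- which has to control the direction of $q_k$ and the initial velocity of $\eta_k$ simultaneously --- to be the only mildly delicate point; everything else is assembled directly from Proposition~\ref{lem_454a}, Lemma~\ref{lem_454} and Lemma~\ref{lem_1045}.
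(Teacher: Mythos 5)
Your proposal is correct and follows essentially the same route as the paper's proof: construct the two rays from narrow-drift sequences via Lemma~\ref{lem_121} and Lemma~\ref{lem_454}, glue them and invoke Proposition~\ref{lem_454a} together with Lemma~\ref{lem_1045} to get a transport line (hence a complete minimizing geodesic) and to prove (iii), deduce (i) from Lemma~\ref{lem_454} applied to a narrow and an arbitrary sequence, then establish (ii) by the reflection symmetry, the same diagonal selection for continuity, and the odd-degree (Borsuk--Ulam) argument for surjectivity. The only difference is presentational: you build the full line directly as $\gamma^+\cup\gamma^-$, whereas the paper first defines the ray $\gamma_{p,v}|_{[0,\infty)}$ and then extends by setting $\gamma_{p,v}(-t):=\gamma_{p,-v}(t)$.
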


\begin{proof} For $v \in S^{n-1}$ we apply Lemma \ref{lem_121} and select a sequence $(q_m)_{m \geq 1} = (q_m^{(v)})_{m \geq 1}$ in $L$
with $q_m \rightsquigarrow v$ narrowly. Let $\eta_m = \eta_m^{(v)}$ be a minimizing geodesic segment with
\begin{equation}  \eta_m(0) = p \qquad \text{and} \qquad \eta_m(d(p, q_m)) = q_m. \label{eq_1010_} \end{equation}
Lemma \ref{lem_454}  implies that $\eta_m \longrightarrow \eta$ for a geodesic ray $\eta: [0, \infty) \rightarrow M$
with $\eta(0) = p$. We now define
\begin{equation}  \gamma_{p,v}(t) = \eta(t) \qquad \qquad \qquad (t \geq 0). \label{eq_1014_}
\end{equation}
Lemma \ref{lem_454} also states that whenever $L \ni p_m \rightsquigarrow v$,
a
minimizing geodesic $\gamma_m$ from $p$ to $p_m$
tends to the geodesic ray $\gamma_{p,v}([0, \infty))$ as $m \rightarrow \infty$. Thus (i) holds true. The geodesic ray $\gamma_{p,v}$ is defined in (\ref{eq_1014_})
for all $v \in S^{n-1}$, but only for $t \geq 0$. We extend this definition by setting
\begin{equation}  \gamma_{p,v}(-t) := \gamma_{p, -v}(t) \qquad \qquad \qquad \text{for all} \ t \geq 0. \label{eq_1024} \end{equation}
Since $q_m^{(v)} \rightsquigarrow v$ narrowly and $q_m^{(-v)} \rightsquigarrow -v$ narrowly, Proposition \ref{lem_454a} shows that
$\gamma_{p, v}: \RR \rightarrow M$ is a transport line of $B$ for any $B \in \partial_v M$. Thus (iii) is proven.
Since $\partial_v M \neq \emptyset$ by Lemma \ref{lem_1045}, the complete geodesic $\gamma_{p,v}$ is therefore minimizing. It is clear from our construction
that $\gamma_{p,v}$ is uniquely determined by requirement (i).

\medskip All that remains is to prove (ii). The map $F(v) = \dot{\gamma}_{p,v}(0) $ from $S^{n-1}$ to $S_p M$ is odd according to (\ref{eq_1024}).
Let us prove its continuity. To this end, suppose that $S^{n-1} \ni v_m \longrightarrow v$,
and our goal is to prove that $\gamma_{p, v_m} \longrightarrow \gamma_{p,v}$.
For each fixed $m \geq 1$ we know that as $k \rightarrow \infty$,
$$ q_k^{(v_m)} \longrightarrow \infty, \qquad \frac{q_k^{(v_m)}}{|q_k^{(v_m)}|} \longrightarrow v_m
\qquad \text{and} \qquad \eta_k^{(v_m)} \longrightarrow \gamma_{p,v_m}. $$
Hence for any $m$ there exists $k_m$ such that $|q_{k_m}^{(v_m)}| \geq m$ while
\begin{equation}  \left| \frac{q_{k_m}^{(v_m)}}{|q_{k_m}^{(v_m)}|} - v_m \right| \leq \frac{1}{m}
\qquad \text{and} \qquad \left| \dot{\eta}_{k_m}^{(v_m)}(0) - \dot{\gamma}_{p, v_m}(0) \right| \leq \frac{1}{m}. \label{eq_1126}
\end{equation}
Since $v_m \longrightarrow v$ we learn from (\ref{eq_1126}) that $p_m := q_{k_m}^{(v_m)}$ satisfies $p_m \rightsquigarrow v$.
The minimizing geodesic segment $\gamma_m := \eta_{k_m}^{(v_m)}$ connects the point $p$ to $p_m$, according to (\ref{eq_1010_}).
From (i) we thus conclude that $\dot{\gamma}_m(0)$ converges to $\dot{\gamma}_{p,v}(0)$
as $m \rightarrow \infty$. Consequently we obtain from (\ref{eq_1126})  that $$ \gamma_{p, v_m} \xrightarrow{m \to \infty} \gamma_{p,v}. $$
Hence
the map  $F(v) = \dot{\gamma}_{p,v}(0) $ is continuous.
Recall that the unit tangent sphere $S_{p} M$ is diffeomorphic to $S^{n-1}$.
The Brouwer degree of $F$ as a continuous, odd map from $S^{n-1}$ to $S_{p} M$ is an odd number. In particular the degree is non-zero,
and hence $F$ is onto.
\end{proof}

\section{Geodesics through $L$-points}

Proposition \ref{prop_1155} admits the following corollary,
which completes the proof of Theorem \ref{thm_1217_}.

\begin{corollary}
The manifold $M$ is diffeomorphic to $\RR^n$.
In fact, for any $p \in L$, the exponential map $\exp_{p}: T_{p} M \rightarrow M$ is a diffeomorphism and
all geodesics passing through $p$ are minimizing.

\medskip Moreover, for any geodesic ray $\gamma$ that emanates from $p$ there is a sequence $(p_m)_{m \geq 1}$ in $L$
with $p_m \longrightarrow \infty$ such that the geodesic segment from $p$ to $p_m$ tends to $\gamma$ as $m \rightarrow \infty$.
\label{cor_131}
\end{corollary}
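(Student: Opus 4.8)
The plan is to deduce Corollary~\ref{cor_131} from the structure established in Proposition~\ref{prop_1155}. Fix $p \in L$. The map $F : S^{n-1} \to S_p M$, $F(v) = \dot{\gamma}_{p,v}(0)$, is continuous, odd and onto by Proposition~\ref{prop_1155}(ii), and for each $v$ the geodesic $\gamma_{p,v} : \RR \to M$ is a complete minimizing geodesic with $\gamma_{p,v}(0) = p$. The first step is to upgrade surjectivity of $F$ to a statement about \emph{every} geodesic ray from $p$: given a unit vector $u \in S_p M$, pick $v \in S^{n-1}$ with $F(v) = u$; then the geodesic $t \mapsto \exp_p(tu)$ agrees with $\gamma_{p,v}$ (same starting point, same initial velocity), so it is a complete minimizing geodesic. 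Hence \emph{all} geodesics through $p$ are minimizing. This already shows $p$ has no conjugate points and no cut locus: the exponential map $\exp_p$ is a local diffeomorphism everywhere (no conjugate points along minimizing geodesics, by the standard Jacobi-field/index-form argument) and is injective (two distinct unit vectors $u_1, u_2$ giving $\exp_p(t_1 u_1) = \exp_p(t_2 u_2)$ would force a non-minimizing or branching behaviour contradicting that both radial geodesics are minimizing). Being a bijective local diffeomorphism, $\exp_p : T_p M \to M$ is a diffeomorphism, and $M$ is diffeomorphic to $\RR^n$.

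For the ``moreover'' clause, fix a geodesic ray $\gamma : [0,\infty) \to M$ with $\gamma(0) = p$, and let $u = \dot{\gamma}(0) \in S_p M$. Choose $v \in S^{n-1}$ with $F(v) = \dot{\gamma}_{p,v}(0) = u$; by uniqueness of geodesics with prescribed initial data, $\gamma = \gamma_{p,v}$ on $[0,\infty)$. Now apply Lemma~\ref{lem_121} (or the construction in Proposition~\ref{prop_1155}) to produce a sequence $(p_m)_{m \geq 1}$ in $L$ with $p_m \rightsquigarrow v$ and $p_m \to \infty$; Proposition~\ref{prop_1155}(i) says the minimizing geodesic segment from $p$ to $p_m$ converges to the ray $\gamma_{p,v}([0,\infty)) = \gamma$. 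This is exactly the assertion.

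The one point requiring a little care is the injectivity of $\exp_p$ — equivalently, that $M$ has empty cut locus from $p$. The clean way is: suppose $\exp_p(t_1 u_1) = \exp_p(t_2 u_2) = q$ with $(t_1, u_1) \neq (t_2, u_2)$ and $t_1, t_2 > 0$. The two radial geodesic segments $s \mapsto \exp_p(s u_i)$, $i = 1,2$, are both minimizing from $p$ to $q$ (each is a sub-arc of a globally minimizing geodesic, by what we just proved), so $t_1 = d(p,q) = t_2 =: t$. If $u_1 \neq u_2$ we have two distinct minimizing geodesic segments from $p$ to $q$; extend each slightly past $q$ inside the respective complete minimizing geodesic $\gamma_{p, v_i}$ (where $F(v_i) = u_i$). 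A minimizing geodesic ray cannot meet a minimizing geodesic segment at more than one point unless they overlap — the fact recalled in Section~2 — so the extension of the first geodesic past $q$ and the second segment up to $q$ would have to overlap, forcing $u_1 = u_2$, a contradiction. Absence of conjugate points is then automatic: a conjugate point along a minimizing geodesic would furnish, just past it, a strictly shorter curve between its endpoints, contradicting minimality of the extended geodesic (which we have, since \emph{all} geodesics from $p$ are minimizing). Thus $d \exp_p$ is everywhere nonsingular, and a proper, surjective, injective local diffeomorphism is a diffeomorphism; properness follows from completeness of $M$.

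I expect the bijectivity-of-$\exp_p$ step to be the only real obstacle — surjectivity onto every ray is immediate from Proposition~\ref{prop_1155}(ii), and the ``moreover'' clause is a direct reading of parts (i)–(ii) — but even this obstacle is dispatched entirely by the overlap principle for minimizing geodesics quoted from \cite{CE} in Section~2, together with the fact that every geodesic from $p$ is minimizing.
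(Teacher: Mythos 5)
Your proposal is correct and follows the same approach as the paper: deduce from Proposition~\ref{prop_1155}(ii) that every geodesic through $p$ is some $\gamma_{p,v}$, hence complete and minimizing, so the cut-locus of $p$ is empty and $\exp_p$ is a diffeomorphism, with the ``moreover'' clause read off from Lemma~\ref{lem_121} and Proposition~\ref{prop_1155}(i). You simply spell out the standard step ``empty cut-locus $\Rightarrow \exp_p$ is a diffeomorphism'' (injectivity via the overlap principle, nonsingularity via absence of conjugate points), which the paper leaves implicit; the properness remark is superfluous once bijectivity and local-diffeomorphism are in hand, but harmless.
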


\begin{proof} According to Proposition \ref{prop_1155}(ii), any geodesic passing through $p$
takes the form $\gamma_{p,v}$ for some $v \in S^{n-1}$, and hence it is a complete, minimizing geodesic.
Thus the cut-locus of $p$ is empty,
and consequently $\exp_{p}: T_p M \rightarrow M$ is a diffeomorphism onto $M$.
The ``Moreover'' part follows from Lemma \ref{lem_121} and Proposition \ref{prop_1155}, which imply that the geodesic ray $\gamma_{p, v}([0, \infty))$ is a limit of a sequence of minimizing geodesics
connecting $p$ with points in $L \setminus \{ p \}$ that tend to infinity.
\end{proof}

\begin{remark}{\rm The proof of Theorem \ref{thm_1217_} is quite robust, and in fact the assumption that $M$ is a Riemannian manifold in Theorem \ref{thm_1217_} can be weakened to the requirement that $M$
is a reversible Finsler manifold.
Moreover, we  think that for a suitable notion of a quasi-net,
the space $\RR^n$ in Theorem \ref{thm_1217_} may be replaced by other Hadamard manifolds.
For example, while Definition \ref{def_409} above
seems specific to the Euclidean space, it actually may be replaced by the ideal boundary of a
Hadamard manifold, see e.g. \cite{BG}.  }
\end{remark}

\medskip
For $v \in S^{n-1}$ define
\begin{equation}  B_v(x) = \inf_{B \in \partial_v M} B(x). \label{eq_341} \end{equation}
Recall that  a $1$-Lipschitz function $f$ foliates if $M$ is covered by transport lines of $f$.

\begin{proposition} For any $v \in S^{n-1}$, the function $B_v$ foliates and belongs to $\partial_v M$.
\label{lem_357}
\end{proposition}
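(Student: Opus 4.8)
The plan is to show two things: first that $B_v \in \partial_v M$, and then that $B_v$ foliates. For the first claim, recall from Definition~\ref{def_409} that $B \in \partial_v M$ means $B(p) = \langle p, v\rangle$ for all $p \in L$. The infimum in \eqref{eq_341} is over a nonempty set (Lemma~\ref{lem_1045}) of $1$-Lipschitz functions, each agreeing with the fixed function $p \mapsto \langle p, v\rangle$ on $L$, so $B_v(p) = \langle p, v\rangle$ for all $p \in L$ as well; and $B_v$, being an infimum of $1$-Lipschitz functions that are uniformly bounded below (they all vanish-shift to the same values on $L$, hence are controlled near any point via the Lipschitz bound), is itself $1$-Lipschitz and finite everywhere. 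Thus $B_v \in \partial_v M$. The real content is the foliation statement.

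To prove $B_v$ foliates, I would fix $x \in M$ and produce a transport line of $B_v$ through $x$. Fix $p \in L$ and invoke Proposition~\ref{prop_1155}: the map $S^{n-1} \ni v' \mapsto \dot\gamma_{p,v'}(0) \in S_pM$ is onto, and by Corollary~\ref{cor_131} the exponential map $\exp_p$ is a diffeomorphism, so there is a unique $v' \in S^{n-1}$ and $t_0 \in \RR$ with $\gamma_{p,v'}(t_0) = x$. This does not immediately help unless $v' = v$, so instead the right move is to work with a point-independent construction: I would pick a sequence $L \ni p_m \rightsquigarrow v$ narrowly (Lemma~\ref{lem_121}), pass to a subsequence so that $d_{p_m} \to B$ for some $B \in \partial_v M$ (Lemma~\ref{lem_1045}), and recall from Proposition~\ref{lem_454a} / Proposition~\ref{prop_1155}(iii) that for any $q \in L$ the geodesic $\gamma_{q,v}$ is a transport line of every element of $\partial_v M$, in particular of $B$ and of $B_v$. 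The issue is that these transport lines pass through points of $L$, not through an arbitrary $x \in M$. So I would instead build, for the given $x$, a minimizing ray from $x$ in the "$v$-direction": take $p_m \rightsquigarrow v$ narrowly, let $\sigma_m$ be a minimizing geodesic from $x$ to $p_m$, pass to a convergent subsequence $\sigma_m \to \sigma^+$, and similarly with $-v$ to get $\sigma^- \to$ a ray from $x$; then argue as in Proposition~\ref{lem_454a} (whose proof only used that the basepoint is fixed and the drifts are narrow, never that the basepoint lies in $L$) that the concatenation $\sigma = \sigma^+ \cup \sigma^-$ is a transport line of $B$. Hence every $B \in \partial_v M$ obtained as such a limit is foliated by transport lines, and in particular $x$ lies on a transport line of $B$.

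The step that converts "$x$ lies on a transport line of some $B \in \partial_v M$" into "$x$ lies on a transport line of $B_v$" is the crux, and I expect it to be the main obstacle. The key observation is that along a transport line $\sigma$ of $B$ through $x$, parametrized so $\sigma(0) = x$, we have $B(\sigma(t)) = B(x) + t$, while every $B' \in \partial_v M$ is $1$-Lipschitz and agrees with $B$ on $L$; one shows $B(\sigma(t)) = B'(\sigma(t))$ for all $t$, so in fact $\sigma$ is a transport line of every $B' \in \partial_v M$ simultaneously, forcing $B(\sigma(t)) = B_v(\sigma(t))$ and making $\sigma$ a transport line of $B_v$ through $x$. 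To see $B(\sigma(t)) = B'(\sigma(t))$: the ray $\sigma^+$ is a limit of minimizing geodesics from $x$ to points $p_m \in L$ with $p_m \rightsquigarrow v$ narrowly, so by the estimate in the proof of Proposition~\ref{lem_454a} applied to $B'$ (using $B'(p_m) = \langle p_m, v\rangle = B(p_m)$ and narrowness) we get $B'(\sigma^+(t)) - B'(x) \geq t$, and the reverse inequality is the Lipschitz bound \eqref{eq_10025}; the same works on $\sigma^-$. Therefore $\sigma$ is a common transport line, $B \equiv B_v \equiv B'$ along $\sigma$, and since $x \in M$ was arbitrary, $B_v$ foliates. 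This last synchronization argument — that narrow drift pins down the value of \emph{every} element of $\partial_v M$ along the limiting ray, not just of the particular limit $B$ — is where all the earlier narrowness machinery pays off, and it is the part I would write most carefully.
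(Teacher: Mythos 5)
Your proof of $B_v \in \partial_v M$ is fine and matches the paper. The foliation argument, however, has a genuine gap, and your stated route does not close it.

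The problem is in the construction of the transport line through an arbitrary $x \in M$. You build $\sigma^+$ as the limit of minimizing geodesics from $x$ to $p_m^+ \rightsquigarrow v$ narrowly, and $\sigma^-$ as the limit of minimizing geodesics from $x$ to $p_m^- \rightsquigarrow -v$ narrowly, and claim the concatenation is a transport line of the subsequential limit $B = \lim d_{p_m^+}$. But $\sigma_m^-$ is a transport curve of $d_{p_m^-}$, not of $d_{p_m^+}$, so by Lemma~\ref{lem_125} the limit $\sigma^-$ is a transport ray of $B^- := \lim d_{p_m^-}$, a function in $\partial_{-v} M$. To conclude that $\sigma^-$ is a (reversed) transport ray of $B$, one needs $B^-(x) = -B(x)$, that is, $B = -B^-$ at $x$; but $-B^-$ is just some element of $\partial_v M$ and there is no reason for it to agree with $B$ away from $L$ (indeed, $\partial_v M$ being a singleton is only established later, in Proposition~\ref{prop_313}). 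Your claim that Proposition~\ref{lem_454a} ``never used that the basepoint lies in $L$'' is not correct: the key identity there is $B(p_m^\pm) - B(p) = \langle p_m^\pm - p, v \rangle$, which needs $B(p) = \langle p, v\rangle$, i.e.\ $p \in L$. Replacing $p$ by $x \notin L$, the estimate for the $\sigma^-$ side degenerates to $\liminf \geq B^-(x) + B(x)$, which has the \emph{wrong} sign when $B$ is taken to be $B_v$ (since then $B_v(x) \leq -B^-(x)$), and the analogous issue with the opposite sign arises for $\sigma^+$ if one tries to use the maximal element instead. The same defect infects your final ``synchronization'' step: for a general $B' \in \partial_v M$, your estimate gives $B'(\sigma^+(t)) - B'(x) - t \geq B(x) - B'(x) + o(1)$, which can be strictly negative. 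A related but independent issue is that your two rays $\sigma^+$ and $\sigma^-$, being limits of geodesics toward two \emph{unrelated} sequences $p_m^+$ and $p_m^-$, have no a priori reason to be aligned at $x$; Lemma~\ref{lem_454} and Proposition~\ref{lem_454a} only guarantee alignment when the emanating point is in $L$.

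The paper's route avoids all of this by using Proposition~\ref{lem_519}(i), which you did not invoke. Since $p_m \in L$, Corollary~\ref{cor_131} gives $\mathrm{cut}(p_m) = \emptyset$, so the minimizing geodesic from $p_m$ through $x$ extends to a full minimizing geodesic $\gamma_m : (-\infty, r_m] \to M$, a transport curve of $d_{p_m}$ on its \emph{entire} domain; passing to a limit via Lemma~\ref{lem_125} gives a complete transport line of $B$ through $x$ in one stroke, with no need to control a second sequence heading toward $-v$. (The point is that the backward half of the line comes from extending past $x$, not from an independent drift toward $-v$.) That $B \equiv B_v$ is then a three-line computation: $B_v(x) \geq B_v(p_m) - d(p_m, x) = d_{p_m}(x) + (\langle p_m, v\rangle - |p_m|) \to B(x)$ by 1-Lipschitzness and narrowness, while $B_v \leq B$ by minimality. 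You should use this path rather than trying to rebuild Proposition~\ref{lem_454a} with a non-$L$ basepoint.
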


\begin{proof} Since $\partial_v M \neq \emptyset$ by Lemma \ref{lem_1045}, the infimum in (\ref{eq_341}) is well-defined,
and $B_v(p) = \langle p, v \rangle$ for any $p \in L$. The function $B_v$ is $1$-Lipschitz, being the infimum of a
family of $1$-Lipschitz functions. Consequently,
$$ B_v \in \partial_v M. $$
By Lemma \ref{lem_121} there exists a sequence $L \ni p_m \rightsquigarrow v$ narrowly, that is,
\begin{equation}   \langle p_m, v \rangle - |p_m|  \xrightarrow{m \rightarrow \infty}  0 \qquad \text{and} \qquad |p_m| \xrightarrow{m \rightarrow \infty} \infty. \label{eq_151_}
\end{equation}
According to Lemma \ref{lem_1045} we may pass to a subsequence,
and assume that $d_{p_m} \longrightarrow B \in \partial_v M$. By Corollary \ref{cor_131} the cut-locus of $p_m$ is empty for all $m$.
Proposition \ref{lem_519}(i) thus implies that $B$ foliates. It remains to prove that $B \equiv B_v$. To this end we note
that since $B_v$ is $1$-Lipschitz, for any $x \in M$ and $m \geq 1$,
$$ B_v(x) \geq B_v(p_m) - d(p_m, x) = \langle p_m, v \rangle - d(p_m, x) = d_{p_m}(x) + (\langle p_m, v \rangle - |p_m|) \xrightarrow{m \to \infty} B(x),
$$
where we also used (\ref{eq_151_}) in the last passage. Thus $B_v \geq B$. However since $B \in \partial_v M$,
the inequality $B_v \leq B$ follows from the definition (\ref{eq_341}). Hence $B \equiv B_v$.
\end{proof}

\begin{lemma} Let $p,q \in L$ be two distinct points. Then for any $v \in S^{n-1}$,
\begin{equation} q \in \gamma_{p, v}( (0, \infty) ) \qquad \Longleftrightarrow \qquad v = \frac{q-p}{|q-p|}. \label{eq_504_} \end{equation}
\label{lem_906}
\end{lemma}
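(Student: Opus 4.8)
The plan is to prove the two directions of (\ref{eq_504_}) separately, using the machinery built up in Proposition \ref{prop_1155} and Proposition \ref{lem_357}.

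For the direction $(\Leftarrow)$, suppose $v = (q-p)/|q-p|$. I would take the constant sequence $p_m \equiv q$, or rather, I would invoke Lemma \ref{lem_121} to obtain a narrowly drifting sequence $L \ni q_m \rightsquigarrow v$; but more directly, I want to observe that the single point $q$ lies on $\gamma_{p,v}$. To see this, fix $B \in \partial_v M$. By Proposition \ref{prop_1155}(iii), $\gamma_{p,v}$ is a transport line of $B$, and by Corollary \ref{cor_230} (with $x = p$), a point $y$ lies on $\gamma_{p,v}$ if and only if $|B(y) - B(p)| = d(p,y)$. Now $B(p) = \langle p, v\rangle$ and $B(q) = \langle q, v\rangle$ since $B \in \partial_v M$ and $p, q \in L$, so $B(q) - B(p) = \langle q - p, v\rangle = |q-p|$, while $d(p,q) = |q-p|$ by (\ref{eq_600}). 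Hence $|B(q)-B(p)| = d(p,q)$, so $q \in \gamma_{p,v}(\RR)$. To place $q$ on the positive ray $\gamma_{p,v}((0,\infty))$ rather than the negative one, note that $\gamma_{p,v}$ is parametrized so that $B(\gamma_{p,v}(t)) - B(\gamma_{p,v}(0)) = t$ along the transport line (since $B$ grows with unit speed in the forward direction and $\gamma_{p,v}^+$ is the forward ray coming from points drifting toward $+v$); thus $q = \gamma_{p,v}(t)$ with $t = B(q) - B(p) = |q-p| > 0$.

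For the direction $(\Rightarrow)$, suppose $q \in \gamma_{p,v}((0,\infty))$, say $q = \gamma_{p,v}(t_0)$ with $t_0 > 0$. I want to identify $v$. Using Proposition \ref{lem_357}, choose $B = B_v \in \partial_v M$, which foliates. By Proposition \ref{prop_1155}(iii), $\gamma_{p,v}$ is a transport line of $B_v$, so along it $B_v(\gamma_{p,v}(t)) = B_v(p) + t$; in particular $B_v(q) = B_v(p) + t_0$, i.e. $\langle q, v\rangle = \langle p, v\rangle + t_0$, giving $\langle q - p, v\rangle = t_0$. On the other hand $t_0 = d(p, q) = |q-p|$ because $\gamma_{p,v}$ is a minimizing geodesic (Proposition \ref{prop_1155}) and $p, q \in L$. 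Therefore $\langle q - p, v\rangle = |q-p| = |q-p| \cdot |v|$, and the equality case of the Cauchy–Schwarz inequality forces $v = (q-p)/|q-p|$.

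The main obstacle is a bookkeeping one rather than a conceptual one: one must be careful about the orientation convention in the parametrization of $\gamma_{p,v}$, i.e. that the forward ray $\gamma_{p,v}([0,\infty))$ is indeed the one along which any $B \in \partial_v M$ increases, so that the sign of $t_0$ comes out correctly and $q$ lands on the positive ray (not the negative one $\gamma_{p,-v}([0,\infty))$). This is pinned down by the construction in Proposition \ref{prop_1155}: the forward ray is obtained as a limit of minimizing geodesics from $p$ to points $q_m \rightsquigarrow v$, and for such points $d_{q_m}$ converges to some $B \in \partial_v M$ along which these geodesics are transport curves, forcing $B$ to increase in the forward direction. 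Once this sign issue is settled, both implications reduce to the equality case of Cauchy–Schwarz combined with the defining property $B(p) = \langle p, v\rangle$ for $B \in \partial_v M$ and the isometry relation (\ref{eq_600}).
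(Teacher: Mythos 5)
Your proposal is correct and takes essentially the same route as the paper: both arguments hinge on the fact (from Proposition \ref{prop_1155}(iii) and Proposition \ref{lem_357}) that $\gamma_{p,v}$ is a transport line of $B_v\in\partial_v M$, combined with $B_v(p)=\langle p,v\rangle$ and $B_v(q)=\langle q,v\rangle$, and Corollary \ref{cor_230} to characterize when $q$ lies on the full line $\gamma_{p,v}(\RR)$; the only cosmetic difference is that you settle the sign of the parameter by reading it off directly from the transport-line parametrization $B_v(\gamma_{p,v}(t))-B_v(p)=t$ (and by Cauchy--Schwarz for the forward direction), whereas the paper first obtains the ``$v=\pm w$'' dichotomy and then rules out $-w$ via $\gamma_{p,-w}((0,\infty))=\gamma_{p,w}((-\infty,0))$.
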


\begin{proof} We know that $\gamma_{p, v}$ is a transport line
of the $1$-Lipschitz function $B_v \in \partial_v M$, by Proposition \ref{prop_1155}(iii) and Proposition \ref{lem_357}.
Hence, from Corollary \ref{cor_230},
$$ q \in \gamma_{p, v}  \qquad \Longleftrightarrow \qquad |B_v(q) - B_v(p)| = d(p,q). $$
Set $w = (q - p) / |q-p|$.
Since $B_v(q) - B_v(p) = \langle q - p, v \rangle$ and $d(p,q) = |q-p|$, we conclude that
\begin{equation} q \in \gamma_{p, v}
 \qquad \Longleftrightarrow \qquad v = \pm w. \label{eq_513_}
\end{equation}
By Proposition \ref{prop_1155}(ii), we know that $\gamma_{p, -w}( (0, \infty) ) = \gamma_{p,w}( (-\infty, 0))$.
Since $B_w(q) > B_w(p)$ and since $\gamma_{p, w}$ is a transport line of $B_{w}$ with $\gamma_{p, w}(0) = p$,
we deduce from (\ref{eq_513_}) that $q \in \gamma_{p, w}( (0, \infty) )$ but $q \not \in \gamma_{p,w}( (-\infty, 0)) =\gamma_{p, -w}( (0, \infty) )$. Thus (\ref{eq_504_}) follows from (\ref{eq_513_}).
\end{proof}

The rest of this paper is devoted almost exclusively to the proof of Theorem \ref{thm_1217}.
From now on and until the end of Section \ref{sec_no_conj} we further assume that
the discrete set $L \subseteq \RR^n$ satisfies the quasi-net condition (QN2) in addition to (QN1), and that
$$ n = 2. $$
Thus $M$ is a two-dimensional manifold homeomorphic to $\RR^2$, by Corollary \ref{cor_131}.

\begin{lemma} For any $p \in L$, the odd map
$F(v) = \dot{\gamma}_{p,v}(0)$ from $S^1$ to $S_p M$ is a homeomorphism.
\label{lem_1008}
\end{lemma}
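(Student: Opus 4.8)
The plan is to upgrade the map $F(v) = \dot\gamma_{p,v}(0)$ from a continuous, odd, surjective map $S^1 \to S_pM$ (which we already have from Proposition \ref{prop_1155}(ii)) to a homeomorphism. Since $S^1$ is compact and $S_pM$ is Hausdorff, it suffices to prove that $F$ is injective: a continuous bijection between these spaces is automatically a homeomorphism. So the whole task reduces to showing $F$ is one-to-one.

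For injectivity, suppose $F(v) = F(w)$ for $v, w \in S^1$, i.e. $\dot\gamma_{p,v}(0) = \dot\gamma_{p,w}(0)$. By uniqueness of geodesics with given initial data, this forces $\gamma_{p,v} \equiv \gamma_{p,w}$ as parametrized complete geodesics, so in particular $\gamma_{p,v}((0,\infty)) = \gamma_{p,w}((0,\infty))$. The idea is now to separate the two rays of this single geodesic line using the quasi-net hypothesis (QN2) together with Lemma \ref{lem_906}. Concretely: the geodesic ray $\gamma_{p,v}([0,\infty))$ is, by Corollary \ref{cor_131} (the ``Moreover'' part), a limit of minimizing segments from $p$ to points $p_m \in L$ tending to infinity; but more usefully, I want to find actual $L$-points \emph{lying on} $\gamma_{p,v}((0,\infty))$. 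Here is where (QN2) enters. Consider a small open sector $S$ around the direction $v$ with apex near $p$ (identifying via $\iota$, or rather arguing abstractly about which $L$-points drift in which direction). Actually the cleaner route: by Lemma \ref{lem_906}, a point $q \in L \setminus \{p\}$ lies on $\gamma_{p,u}((0,\infty))$ precisely when $u = (q-p)/|q-p|$. So the set of directions $u \in S^1$ for which $\gamma_{p,u}((0,\infty))$ contains \emph{some} $L$-point is exactly $D_p := \{ (q-p)/|q-p| : q \in L, q \neq p \}$, and for each such $u$ that $L$-point is uniquely determined. If $F(v) = F(w)$ with $v \neq w$, then $\gamma_{p,v}((0,\infty))$ and $\gamma_{p,w}((0,\infty))$ are two \emph{different} sub-rays of the same geodesic line $\gamma_{p,v}(\RR)$ (since $v \neq w$ gives $\gamma_{p,v}((0,\infty)) \ne \gamma_{p,w}((0,\infty))$ as the parametrizations start at $p$ with the same velocity — wait, that already contradicts $\dot\gamma_{p,v}(0) = \dot\gamma_{p,w}(0)$ forcing identical rays). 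Let me re-set: $F(v) = F(w)$ already gives $\gamma_{p,v} = \gamma_{p,w}$, hence $\gamma_{p,w}((0,\infty)) = \gamma_{p,v}((0,\infty))$, and then by Proposition \ref{prop_1155}(ii) ($F$ odd) $\gamma_{p,-v} = \gamma_{p,-w}$ too; so the geodesic \emph{line} through $p$ with velocity $\dot\gamma_{p,v}(0)$ is simultaneously $\gamma_{p,v}(\RR)$, $\gamma_{p,w}(\RR)$, etc. The contradiction must come from the structure of $L$: I will show that $F$ being non-injective would force the line to "absorb" $L$-points from incompatible directions, violating Lemma \ref{lem_906}.

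So the key step, and the main obstacle, is the following: use (QN2) to produce, for a given $v$, a sequence of $L$-points $p_m$ with $p_m/|p_m| \to v$ AND with $p_m \in \gamma_{p,v}((0,\infty))$, or at least enough such points to pin down $v$ from the geodesic ray $\gamma_{p,v}([0,\infty))$ as a subset of $M$. The mechanism: (QN2) applied to a narrow sector around $v$ yields $L$-points $p_m$ with $p_m/|p_m| \to v$; a minimizing segment from $p$ to $p_m$ converges (by Proposition \ref{prop_1155}(i), after translating by $p$ — note $p_m - p \rightsquigarrow v$ still, by the Euclidean computation since $|p|$ is fixed) to $\gamma_{p,v}([0,\infty))$. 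This shows the \emph{ray} $\gamma_{p,v}([0,\infty))$ determines $v$: if $\gamma_{p,v}([0,\infty)) = \gamma_{p,w}([0,\infty))$ then both rays are approximated by segments to $L$-points drifting to $v$ and to $w$ respectively, but... I still need these to be genuinely constraining. The truly clean finish: I expect that $F(v) = F(w)$, $v \neq w$, combined with oddness, forces $\gamma_{p,v}$ and $\gamma_{p,-w}$ to be the \emph{same} ray from $p$ (not just same line), because $\dot\gamma_{p,v}(0) = \dot\gamma_{p,w}(0) = -\dot\gamma_{p,-w}(0)$ is impossible unless... hmm, $\dot\gamma_{p,v}(0) = \dot\gamma_{p,w}(0)$ and $\dot\gamma_{p,-w}(0) = -\dot\gamma_{p,w}(0) = -\dot\gamma_{p,v}(0) = \dot\gamma_{p,-v}(0)$, consistent. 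The real point: $F$ injective $\iff$ $F$ bijective (given surjective), and I should instead directly invoke that the continuous odd map $F: S^1 \to S_pM$ of odd (hence nonzero, in fact $\pm 1$ after checking) Brouwer degree between $1$-spheres — if the degree is exactly $\pm 1$ it is a homeomorphism. Actually for $S^1 \to S^1$ a continuous map of degree $\pm 1$ need not be injective! So I cannot avoid (QN2).

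I will therefore argue injectivity head-on as follows. Fix $v \neq w$ in $S^1$. By Lemma \ref{lem_121} pick $L \ni p_m \rightsquigarrow v$ narrowly; replacing by $p_m - p$ (which also $\rightsquigarrow v$ narrowly by Lemma \ref{lem_121}) and using the translated version of the construction, the segments from $p$ to $p_m$ converge to $\gamma_{p,v}([0,\infty))$. Now I also want \emph{actual} $L$-points on $\gamma_{p,v}((0,\infty))$: here apply (QN2) to the open sector $S = \{ p + r\theta : \theta \in U, r > 0\}$ where $U$ is a small neighborhood of $v$ in $S^1$ — wait, $p + r\theta$ lies in a sector with apex $p$, and (QN2) is stated for sectors in $\RR^n$ with arbitrary apex, so this is fine. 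It gives $L$-points $q_m \in S$ with $|q_m| \to \infty$ and $|q_{m+1}|/|q_m| \to 1$; as $U$ shrinks these force $q_m/|q_m|$ (and $(q_m - p)/|q_m - p|$) into any neighborhood of $v$. But to get $q_m$ actually \emph{on} the geodesic $\gamma_{p,v}$, I use Lemma \ref{lem_906}: $q_m \in \gamma_{p, u_m}((0,\infty))$ for $u_m = (q_m - p)/|q_m - p| \to v$. By continuity of $F$ (Proposition \ref{prop_1155}(ii)), $\gamma_{p,u_m} \to \gamma_{p,v}$, and since each $q_m \in \gamma_{p,u_m}$ with $|q_m| \to \infty$, a limiting argument places a "tail at infinity" of $\gamma_{p,v}$ consistent only with direction $v$. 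Symmetrically for $w$: there are $L$-points $q'_m$ on near-$\gamma_{p,w}$ rays. If $\gamma_{p,v} = \gamma_{p,w}$, then this single geodesic line "sees" $L$-points crowding towards two distinct directions $v, w$ at infinity; but a complete geodesic in $M \cong \RR^2$ under our hypotheses, being a transport line of $B_v$ (Proposition \ref{lem_357}, Proposition \ref{prop_1155}(iii)), satisfies $B_v(\gamma_{p,v}(t)) - B_v(p) = t$, and the $L$-points $q_m$ on it satisfy $B_v(q_m) - B_v(p) = \langle q_m - p, v\rangle$, while also $B_w$-compatibility would give $\langle q_m - p, w \rangle = \pm(\text{arclength})= \pm \langle q_m - p, v \rangle$; letting $m \to \infty$ with $q_m/|q_m| \to v$ yields $\langle v, w\rangle = \pm 1$, i.e. $w = \pm v$; the case $w = -v$ is excluded because $\gamma_{p,v}((0,\infty)) \neq \gamma_{p,-v}((0,\infty))$ (they are opposite rays, by Proposition \ref{prop_1155}(ii) and the fact that $B_v$ strictly increases along one and strictly decreases along the other, as used in the proof of Lemma \ref{lem_906}). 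Hence $w = v$, contradiction. Therefore $F$ is injective, and being a continuous bijection from compact $S^1$ to Hausdorff $S_pM \cong S^1$, it is a homeomorphism. The main obstacle is the bookkeeping in the step that exhibits genuine $L$-points on (or asymptotic to) the geodesic $\gamma_{p,v}$ with controlled direction — making the "two directions at infinity on one line" contradiction fully rigorous via the Busemann functions $B_v$ is the crux, and it is exactly where (QN2), which was unused in the $n$-dimensional Theorem \ref{thm_1217_}, does its work.
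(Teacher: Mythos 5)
Your plan correctly identifies that it suffices to show $F$ is injective (compactness of $S^1$ plus the Hausdorff property of $S_pM$ then upgrades a continuous bijection to a homeomorphism), and your use of Lemma~\ref{lem_906} captures the right mechanism on the dense set $A=\{(q-p)/|q-p|\,;\,q\in L\setminus\{p\}\}$: if $v,w\in A$ and $\gamma_{p,v}=\gamma_{p,w}$, then the $L$-point on $\gamma_{p,v}((0,\infty))$ forces $v=w$ via Lemma~\ref{lem_906}. That part is correct and is essentially what the paper does.

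The gap is in the passage from $v,w\in A$ to general $v,w\in S^1$. Your argument repeatedly needs an honest $L$-point \emph{on} the geodesic $\gamma_{p,v}$ (you write ``the $L$-points $q_m$ on it satisfy $B_v(q_m)-B_v(p)=\langle q_m-p,v\rangle$, while also $B_w$-compatibility would give $\langle q_m-p,w\rangle=\pm\text{arclength}$'', and the second identity uses membership in the geodesic). But by Lemma~\ref{lem_906} the directions $v$ for which $\gamma_{p,v}((0,\infty))$ contains an $L$-point are exactly those in $A$, a countable set; for a typical $v$ there are none. What (QN2) and the narrow-drift machinery actually give you are $L$-points $q_m\in\gamma_{p,u_m}((0,\infty))$ with $u_m\to v$, i.e.\ points on \emph{nearby} geodesics, not on $\gamma_{p,v}$. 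Since $|q_m|\to\infty$ while $\gamma_{p,u_m}\to\gamma_{p,v}$ only locally uniformly, you have no control on $d(q_m,\gamma_{p,v})$ relative to $|q_m|$, so the ``limiting argument placing a tail at infinity'' and the subsequent Busemann computation do not close. In short: the proposal proves injectivity of $F|_A$, but not of $F$.

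The paper bridges exactly this gap with a one-dimensional topological fact that you do not invoke: if $h:S^1\to S^1$ is continuous, $A,B\subseteq S^1$ are dense, $A=h^{-1}(B)$, and $h|_A$ is injective, then $h$ is injective. It verifies the hypothesis $A=F^{-1}(B)$ with $B:=F(A)$ by one more application of Lemma~\ref{lem_906} (any geodesic ray from $p$ in a direction of $B$ contains an $L$-point, so its $F$-preimage is a singleton in $A$), together with the density of $A$ (from Lemma~\ref{lem_121}) and of $B$ (from the ``Moreover'' part of Corollary~\ref{cor_131}). Note also that this route needs only (QN1); your expectation that (QN2) is what ``does its work'' in this lemma is not matched by the paper's proof.
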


\begin{proof} The function $F$ is continuous and onto, by Proposition \ref{prop_1155}(ii). We  need to prove that $F$ is one-to-one.
We will use the following one-dimensional topological fact: If $h: S^{1} \rightarrow S^{1}$ is a continuous map, and $A, B \subseteq S^1$ are two dense subsets with $A = h^{-1}(B)$ such that the restriction of $h$ to $A$ is one-to-one, then the function $h: S^1 \rightarrow S^1$ is one-to-one. Write $$ A = \left \{ \frac{q - p}{|q - p|} \, ; \, q \in L \setminus \{ p \} \right \}, $$ which is a dense subset of $S^1$ by Lemma \ref{lem_121}.
By Lemma \ref{lem_906} and the ``Moreover'' part in Corollary \ref{cor_131}, the set  $B := F(A)$ is a dense subset of $S_p M$. For any $b \in B$, the geodesic ray emanating from $p$
in direction $b \in S_p M$ contains a point $q \in L \setminus \{ p \}$, and hence $F^{-1}(b) \subseteq S^1$ is a singleton  by
Lemma \ref{lem_906}.
Consequently, $A = F^{-1}(B)$ and the restriction of $F$ to $A$ is one-to-one.
In view of the above fact, $F$ is one-to-one.
\end{proof}

Let $\gamma: \RR \rightarrow M$ be any simple curve with $\lim_{t \rightarrow \pm \infty} \gamma(t) = \infty$
(for example, any complete minimizing geodesic has this property).
 The curve $\gamma$ induces a simple closed  curve in the one-point compactification of $M$ which is homeomorphic to the two-dimensional sphere.
From the Jordan curve theorem we learn that $M \setminus \gamma$ consists of two connected components, each of which is
homeomorphic to $\RR^2$
by the Sch\"onflies theorem.

\medskip When $\gamma_1, \gamma_2: \RR \rightarrow M$ are disjoint simple curves with
$\lim_{t \rightarrow \pm \infty} \gamma_i(t) = \infty$ for $i=1,2$, the set
$M \setminus (\gamma_1 \cup \gamma_2)$ thus consists of three connected components.
Exactly one of these three connected components is {\it in the middle}, in the sense that any curve connecting the two other components, has to intersect the middle one.
Denote the set of all points in this middle connected component by $\btwn(\gamma_1,\gamma_2)$, so as to say that a point $x\in \btwn(\gamma_1,\gamma_2)$ is {\it between} $\gamma_1$ and $\gamma_2$.

\begin{lemma} For any $x \in M$ and $v \in S^1$, there exist $p, q \in L$ such that  $x \in \btwn(\gamma_{p,v}, \gamma_{q,v})$. \label{lem_118}
\end{lemma}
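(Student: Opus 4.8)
The plan is to trap $x$ between two of the geodesics $\gamma_{p,v}$ by exploiting points of $L$ that escape to infinity in the two directions orthogonal to $v$. Fix $x$ and $v$, put $R = d(0,x)$, and choose a unit vector $w \perp v$. Using Lemma \ref{lem_121} I would pick sequences $(p_m)$ and $(q_m)$ in $L$ with $p_m \rightsquigarrow w$ and $q_m \rightsquigarrow -w$; the candidate pair will be $(p,q) = (p_m, q_m)$ for all sufficiently large $m$. By Proposition \ref{lem_357} and Proposition \ref{prop_1155}(iii), the geodesics $\gamma_{p_m,v}$, $\gamma_{q_m,v}$ and $\gamma_{0,v}$ are all transport lines of the $1$-Lipschitz function $B_v \in \partial_v M$, carrying the ``height'' coordinate $B_v$; concretely $B_v(\gamma_{p_m,v}(t)) = \langle p_m, v\rangle + t$, since $\gamma_{p_m,v}(0)=p_m$ and $B_v(p_m) = \langle p_m, v\rangle$.

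The heart of the matter is to show that $\gamma_{p_m,v}$ and $\gamma_{q_m,v}$ eventually avoid a fixed ball around $0$ — concretely, that for all large $m$ a fixed minimizing geodesic $\sigma$ from $0$ to $x$ is disjoint from both. I would argue by contradiction: if $z = \gamma_{p_m,v}(\tau) \in \sigma$ then $d(0,z) \le R$, so on one hand $|\tau| = d(p_m,z) \ge |p_m| - R$, while on the other hand $|\langle p_m,v\rangle + \tau| = |B_v(z)| \le R$ because $B_v$ is $1$-Lipschitz with $B_v(0)=0$; combining, $|\langle p_m,v\rangle| \ge |p_m| - 2R$, which contradicts $p_m/|p_m| \to w$ together with $\langle w,v\rangle = 0$. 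The same estimate handles $q_m$ since $\langle -w,v\rangle = 0$. The intuition is that $\gamma_{p_m,v}$ sits near the level $B_v \approx \langle p_m,v\rangle \approx 0$ only in a far-away neighborhood of $p_m$, so it has no budget for the long detour back to a bounded region.

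Next I would place the two geodesics on opposite sides of $\gamma_{0,v}$, which, being a complete minimizing geodesic, separates $M \cong \RR^2$ into two components $H^+$ and $H^-$. For large $m$, $p_m$ is not a multiple of $v$ (it tends to $w\ne\pm v$), so $d(0,p_m)=|p_m| > |\langle p_m,v\rangle| = |B_v(p_m)-B_v(0)|$ and $p_m \notin \gamma_{0,v}$; by Lemma \ref{lem_1005}, $\gamma_{p_m,v}$ is then disjoint from $\gamma_{0,v}$, and likewise $\gamma_{q_m,v}$. Since minimizing geodesics from $0 \in L$ are unique (Corollary \ref{cor_131}) and $\dot\gamma_{0,w}(0) = F(w) \ne \pm F(v) = \pm\dot\gamma_{0,v}(0)$ (Lemma \ref{lem_1008}), the geodesic $\gamma_{0,w}$ meets $\gamma_{0,v}$ only at $0$ and, crossing it transversally there, has its two rays lying one in $H^+$ and one in $H^-$. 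A minimizing geodesic $\mu_m$ from $0$ to $p_m$ converges to $\gamma_{0,w}([0,\infty))$ by Proposition \ref{prop_1155}(i) and, by the same uniqueness argument, meets $\gamma_{0,v}$ only at $0$; hence for large $m$ it lies (off $0$) on the $H^+$ side, so $p_m \in H^+$, and therefore the connected set $\gamma_{p_m,v}$, being disjoint from $\gamma_{0,v}$, satisfies $\gamma_{p_m,v} \subseteq H^+$. Symmetrically $\gamma_{q_m,v} \subseteq H^-$. In particular $\gamma_{p_m,v}$ and $\gamma_{q_m,v}$ are disjoint and $\gamma_{0,v}$ separates them in $M$.

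Finally I would assemble these facts. Fix a large $m$ and write $\gamma_1 = \gamma_{p_m,v}$, $\gamma_2 = \gamma_{q_m,v}$. The curve $\gamma_{0,v}$ is connected and disjoint from $\gamma_1 \cup \gamma_2$, so it lies in one component $C$ of $M \setminus (\gamma_1 \cup \gamma_2)$; by the ball-avoidance claim the connected set $\sigma$, which meets $\gamma_{0,v}$ at $0$ and avoids $\gamma_1 \cup \gamma_2$, lies in $C$ too, so $x \in C$. To see $C = \btwn(\gamma_1,\gamma_2)$, recall that the closure of the middle component $\btwn(\gamma_1,\gamma_2)$ meets both $\gamma_1$ and $\gamma_2$, so one can find a path from $\gamma_1$ to $\gamma_2$ that otherwise stays in $\btwn(\gamma_1,\gamma_2)$; if $C$ were one of the other two components, this path would be disjoint from $\gamma_{0,v} \subseteq C$, contradicting that $\gamma_{0,v}$ separates $\gamma_1$ from $\gamma_2$. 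Hence $x \in C = \btwn(\gamma_{p_m,v}, \gamma_{q_m,v})$, and $(p,q) = (p_m,q_m)$ works. The only genuinely delicate step is the ball-avoidance claim of the second paragraph; everything else is routine plane topology glued to the transport-line structure of $B_v$ and the absence of a cut locus at $L$-points.
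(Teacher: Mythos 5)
Your proof is correct, but it takes a genuinely different route from the paper's. The paper's argument first invokes Proposition \ref{lem_357} to produce a transport line $\eta$ of $B_v$ through the arbitrary point $x$ itself; $\eta$ separates $M$ into $H^\pm$, and the task reduces to exhibiting $L$-points $p \in H^+$ and $q \in H^-$, which the paper does by a short qualitative argument (pick $p\in L\setminus\eta$, then approximate a geodesic ray from $p$ through $x$ by segments $[p,q_m]$ with $q_m\in L$, and note such a segment crossing $\eta$ near $x$ must end in $H^-$). Since $\gamma_{p,v}$ and $\gamma_{q,v}$ cannot meet $\eta$ by Lemma \ref{lem_1005}, the curve $\eta$ (hence $x$) lies between them, and there is nothing more to check. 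Your version avoids the transport line through $x$ altogether: you work with $\gamma_{0,v}$, push $p_m\rightsquigarrow w$ and $q_m\rightsquigarrow -w$ for a fixed $w\perp v$, and supply a quantitative ``ball-avoidance'' estimate (combining $|\tau| = d(p_m,z) \ge |p_m|-R$ with $|\langle p_m,v\rangle + \tau| = |B_v(z)| \le R$, contradicting $\langle p_m,v\rangle = o(|p_m|)$) to guarantee that $\gamma_{p_m,v}$ and $\gamma_{q_m,v}$ eventually miss a fixed minimizing geodesic $\sigma$ from $0$ to $x$; you then locate $p_m$ and $q_m$ on opposite sides of $\gamma_{0,v}$ and finish with a plane-topology argument identifying the component containing $\gamma_{0,v}\cup\sigma$ with $\btwn(\gamma_{p_m,v},\gamma_{q_m,v})$. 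What the paper's route buys is brevity (no explicit estimate, no sub-argument showing which side each $p_m$ falls on); what your route buys is that the separating transport line is anchored at an $L$-point rather than at an arbitrary $x$, so the foliation statement in Proposition \ref{lem_357} is not really needed, only Proposition \ref{prop_1155}(iii). Your final topological step (``the closure of $\btwn(\gamma_1,\gamma_2)$ meets both $\gamma_1$ and $\gamma_2$, and a path between them lying in that closure would have to cross $\gamma_{0,v}$'') is correct but is asserted rather than proved; it is the kind of Jordan--Sch\"onflies fact the paper elides as well, so it is in keeping with the paper's level of rigor, but you should be aware it is a step worth being explicit about.
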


\begin{proof} By Proposition  \ref{lem_357} there is a complete minimizing geodesic $\eta$ with $\eta(0) = x$
which is a transport line of $B_v$. By the Jordan curve theorem, the curve $\eta$ separates the manifold $M$ into two connected components.
The geodesic $\eta$ cannot contain the entire set $L$: Otherwise, it follows from (\ref{eq_600}) that all points of $L$
are contained in a single straight line in $\RR^n$, and this possibility is ruled out
by either (QN1) or (QN2). Hence there exists $p \in L \setminus \eta$. Write $H^+$ for the connected component of $M \setminus \eta$
that contains $p$, and write $H^-$ for the other connected component.

\medskip By Corollary \ref{cor_131}, there exists a minimizing geodesic ray $\gamma$ emanating from $p$ that passes through $x$.
This geodesic ray crosses the geodesic $\eta$ at the point $x$. By Corollary \ref{cor_131}, we may find a sequence
$q_m \in L$ such that $q_m \longrightarrow \infty$ and such that the geodesic segment from $p$ to $q_m$ tends to $\gamma$.
Thus for a sufficiently large $m$, the geodesic segment from $p$ to $q_m$ crosses $\eta$ at a point close to $x$.
This minimizing geodesic segment cannot cross $\eta$ twice, since $\eta$ is a complete, minimizing geodesic. Hence $q_m \in H^-$.
Fix such  $m$ and set $q: = q_m \in L \cap H^-$.

\medskip We have thus found points $p \in L \cap H^+$ and $q \in L \cap H^-$. From Proposition \ref{prop_1155}(iii), the complete minimizing geodesics  $\gamma_{p,v}$ and $\gamma_{q,v}$ are transport lines of $B_v \in \partial_v M$. By Lemma \ref{lem_1005} they cannot intersect $\eta$, thus $\gamma_{p,v} \subset H^+$ while $\gamma_{q,v} \subset H^-$. Hence the entire curve $\eta$ lies between $\gamma_{p,v}$ and $\gamma_{q,v}$, and in particular $x = \eta(0) \in \btwn(\gamma_{p,v}, \gamma_{q,v})$.
\end{proof}

\begin{lemma}
Let $p \in L$, $v \in S^1$ and write $H^+$ and $H^-$ for the two connected components of $M \setminus \gamma_{p,v}$.
Then  there exists a unit vector $v^{\perp} \in S^1$ orthogonal
to $v$ such that for any $q \in L$,
\begin{equation} \left \langle q-p, \pm v^{\perp} \right \rangle > 0 \qquad \Longrightarrow \qquad q \in H^\pm. \label{eq_710}
\end{equation}
\label{lem_0158}
\end{lemma}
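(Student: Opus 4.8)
The plan is to describe the two sides of $\gamma_{p,v}$ by transporting the standard picture in $T_p M$ through the exponential map. By Corollary \ref{cor_131} the map $\exp_p: T_p M \to M$ is a diffeomorphism, and by Lemma \ref{lem_1008} the odd map $F(w) = \dot{\gamma}_{p,w}(0)$ is a homeomorphism from $S^1$ onto $S_p M$; moreover $\gamma_{p,w}(t) = \exp_p(t F(w))$ for every $t \in \RR$, since both sides are the geodesic emanating from $p$ with initial velocity $F(w)$. It follows that
$$ \Theta: S^1 \times (0, \infty) \longrightarrow M \setminus \{ p \}, \qquad \qquad \Theta(w,t) = \exp_p(t F(w)) = \gamma_{p,w}(t), $$
is a homeomorphism; injectivity uses that $\exp_p$ is injective, that $|F(w)| = 1$, and the injectivity of $F$ from Lemma \ref{lem_1008}. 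Since $\gamma_{p,v}(t) = \Theta(v,t)$ for $t > 0$ and $\gamma_{p,v}(t) = \Theta(-v,-t)$ for $t < 0$ — here one uses $\gamma_{p,v}(-s) = \gamma_{p,-v}(s)$ together with the oddness of $F$ — the set $\Theta^{-1}(\gamma_{p,v}(\RR) \setminus \{ p \})$ equals $\{ v, -v \} \times (0, \infty)$. Hence $\Theta$ restricts to a homeomorphism of $M \setminus \gamma_{p,v}$ onto $(S^1 \setminus \{ v, -v \}) \times (0, \infty)$, which has exactly two connected components, namely $A \times (0, \infty)$ as $A$ ranges over the two open arcs of $S^1 \setminus \{ v, -v \}$. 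Writing $A^+$ for the open arc with $\Theta(A^+ \times (0, \infty)) = H^+$ and $A^-$ for the other arc, we get $\Theta(A^- \times (0, \infty)) = H^-$.

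Next I would locate the points of $L$ in this picture. Take $q \in L$ with $q \neq p$ and $q - p \notin \RR v$, and write $\Theta^{-1}(q) = (w_0, t_0)$, so that $q = \gamma_{p, w_0}(t_0)$ with $t_0 > 0$, i.e. $q \in \gamma_{p, w_0}((0, \infty))$. By Lemma \ref{lem_906} this forces $w_0 = (q - p)/|q - p|$. In particular $w_0 \notin \{ v, -v \}$, so $q \notin \gamma_{p,v}$, and by the previous paragraph
$$ q \in H^\pm \qquad \Longleftrightarrow \qquad \frac{q - p}{|q - p|} \in A^\pm. $$
It remains to identify the arcs with half-spaces: for either of the two unit vectors $v^\perp \in S^1$ orthogonal to $v$, the two open arcs of $S^1 \setminus \{ v, -v \}$ are exactly $\{ w \in S^1 : \langle w, v^\perp \rangle > 0 \}$ and $\{ w \in S^1 : \langle w, v^\perp \rangle < 0 \}$, and replacing $v^\perp$ by $-v^\perp$ interchanges them. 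I would thus fix $v^\perp \in S^1$ orthogonal to $v$ so that $A^+ = \{ w \in S^1 : \langle w, v^\perp \rangle > 0 \}$, whence $A^- = \{ w \in S^1 : \langle w, -v^\perp \rangle > 0 \}$. If $q \in L$ satisfies $\langle q - p, v^\perp \rangle > 0$, then $q \neq p$, $q - p \notin \RR v$, and $(q-p)/|q-p| \in A^+$, so $q \in H^+$ by the displayed equivalence; the case $\langle q - p, -v^\perp \rangle > 0$ gives $q \in H^-$ by the same reasoning. This is precisely (\ref{eq_710}).

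The argument is essentially topological once Corollary \ref{cor_131} and Lemma \ref{lem_1008} are available, and the one step that needs genuine care is the mutual compatibility of the identifications — in particular the fact that the $S^1$-coordinate $\Theta^{-1}(q)$ of a point $q \in L$ is exactly the Euclidean direction $(q - p)/|q - p|$, which is supplied by Lemma \ref{lem_906} and, underneath it, by the transport-line structure of $B_v$. The invocation of Lemma \ref{lem_1008}, where the hypothesis $n = 2$ genuinely enters, is what makes $\Theta$ a homeomorphism and hence makes $M \setminus \gamma_{p,v}$ split the way the Euclidean complement of a line does.
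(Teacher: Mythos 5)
Your proof is correct and follows essentially the same route as the paper's. The paper works with the sets $I^{\pm} = \{ w \in S^1 : \gamma_{p,w}((0,\infty)) \subset H^{\pm} \}$ and uses Corollary \ref{cor_131} plus Lemma \ref{lem_1008} to see they are the two open arcs of $S^1 \setminus \{v,-v\}$, then applies Lemma \ref{lem_906} exactly as you do; your explicit homeomorphism $\Theta = \exp_p \circ (\text{polar})$ is just a more concrete packaging of the same decomposition.
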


\begin{proof} Corollary \ref{cor_131} implies that two distinct geodesic rays
 emanating from $p$ are disjoint except for their intersection at $p$.
 Therefore any geodesic ray from $p$ is contained either in $H^+$ or in $H^-$ or in $\gamma_{p,v}$. Set
\begin{equation}  I^{\pm} = \left \{ w \in S^1 \, ; \, \gamma_{p, w}( (0, \infty) ) \subset H^{\pm} \right \}. \label{eq_1020} \end{equation}
It follows from Lemma \ref{lem_1008} that $I^+ \cup I^- = S^1 \setminus \{v, -v \}$ with $I^+ \cap I^- = \emptyset$.
Since $I^{\pm}$ is an open subset of $S^1$, we conclude that $I^{\pm}$ is an open arc in $S^1$ that stretches from the point $v$ to its antipodal point $-v$. Write $v^{\perp} \in S^1$ for the unique
vector in $I^+$ that is orthogonal to $v$.

\medskip Let $q \in L$ satisfy $\left \langle q-p, \pm v^{\perp} \right \rangle > 0$. Then $q \neq p$,
and the unit vector $w = (q - p) / |q-p|$ belongs to the arc $I^{\pm}$. Lemma \ref{lem_906} shows
that $q \in \gamma_{p,w}((0, \infty))$ while $\gamma_{p,w}((0, \infty)) \subset H^{\pm}$ by (\ref{eq_1020}). Hence $q \in H^{\pm}$.
\end{proof}

Our knowledge regarding geodesics passing through $L$-points will be applied in order to deduce certain large-scale properties of the manifold $M$. Proposition \ref{lem_net} shows that  the distances of faraway points from $L$ cannot grow too fast. The following Proposition \ref{lem_445} implies that the large-scale geometry of $M$ tends to the Euclidean one.

\begin{proposition}
Let $p \in L$. Then for any $v \in S^1$,
\begin{equation}  \lim_{t \rightarrow \infty} \frac{d( \gamma_{p, v}(t), L )}{t} = 0. \label{eq_1116} \end{equation}
Moreover, the convergence is uniform in $v \in S^1$.
\label{lem_net}
\end{proposition}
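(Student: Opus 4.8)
The plan is to use the quasi-net condition (QN2) together with the structural results already established about the geodesics $\gamma_{p,v}$. Fix $p\in L$ and $v\in S^1$. The point $\gamma_{p,v}(t)$ lies on a complete minimizing geodesic ray that, by the ``Moreover'' part of Corollary \ref{cor_131}, is a limit of geodesic segments from $p$ to points $q_m\in L$ with $q_m\to\infty$. Heuristically, for large $t$ the point $\gamma_{p,v}(t)$ is close to the segment from $p$ to such a far-away $q_m$, and if $|q_m|$ is not much larger than $t$ then $q_m$ is itself close to $\gamma_{p,v}(t)$, giving $d(\gamma_{p,v}(t),L)=o(t)$. The delicate point is quantitative: we must produce, for every large $t$, a lattice point at distance $o(t)$ from $\gamma_{p,v}(t)$, and (QN2) is exactly the tool that supplies points $p_m\in L$ along a prescribed direction with $|p_{m+1}|/|p_m|\to 1$, so that the ``gaps'' between consecutive radii of available $L$-points become negligible compared to the radii themselves.

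First I would fix the direction $w=\dot\gamma_{p,v}(0)\in S_pM$ and, using Lemma \ref{lem_906} (or rather the density statement in Lemma \ref{lem_1008} and Corollary \ref{cor_131}), approximate $w$ by directions $(q-p)/|q-p|$ with $q\in L$; but more efficiently I would apply (QN2) to an open sector $S$ around the ray $\{p+r v: r>0\}\subseteq\RR^2$ in the model space $\RR^2$ (translating so that $p=0$ if convenient), obtaining a sequence $p_m\in L\cap S$ with $|p_m|\to\infty$ and $|p_{m+1}|/|p_m|\to 1$. For each such $p_m$, let $\sigma_m$ be a minimizing geodesic segment from $p$ to $p_m$; by Proposition \ref{prop_1155}(i), and the fact that the Euclidean directions $(p_m-p)/|p_m-p|$ can be taken to converge to $v$ (by shrinking the sector), $\sigma_m$ converges to the ray $\gamma_{p,v}([0,\infty))$. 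Then for $t$ between $d(p,p_{m})$ and $d(p,p_{m+1})$, I would estimate $d(\gamma_{p,v}(t),L)$ by comparing $\gamma_{p,v}(t)$ first with the point $\sigma_{m+1}(t)$ on the segment (using local uniform convergence $\sigma_{m+1}\to\gamma_{p,v}$ on the compact interval $[0,t]$, which requires care since $t$ grows), and then with the endpoint $p_{m+1}\in L$, which is at distance $d(p,p_{m+1})-t\le d(p,p_{m+1})-d(p,p_{m})$ along $\sigma_{m+1}$ from $\sigma_{m+1}(t)$. The triangle inequality gives
\begin{equation}
d(\gamma_{p,v}(t),L)\le d(\gamma_{p,v}(t),\sigma_{m+1}(t))+\big(d(p,p_{m+1})-d(p,p_m)\big),
\label{eq_plan_main}
\end{equation}
and the second term is $o(t)$ by (QN2) since $d(p,p_{m+1})/d(p,p_m)\to 1$ (using that Riemannian and Euclidean distances to $p$ from $L$-points agree up to the uniform control coming from Proposition \ref{lem_net}'s companion estimate — or, to stay self-contained here, using only that $d(p,p_m)\to\infty$ and the ratio condition transfers).

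The main obstacle is the first term in \eqref{eq_plan_main}: the convergence $\sigma_m\to\gamma_{p,v}$ is only locally uniform, so $d(\gamma_{p,v}(t),\sigma_m(t))$ need not be uniformly small once $t$ is allowed to run off to infinity together with $m$. To handle this I would argue by contradiction and compactness: if \eqref{eq_1116} failed, there would be $\eps>0$, a sequence $t_k\to\infty$, and (after the uniformity claim) directions $v_k\to$ some $v_0$ with $d(\gamma_{p,v_k}(t_k),L)\ge\eps t_k$; I would then rescale, or rather pass to Busemann limits, using that $\gamma_{p,v_0}$ is a transport line of $B_{v_0}$ (Proposition \ref{lem_357}, Proposition \ref{prop_1155}(iii)) to locate $\gamma_{p,v_k}(t_k)$ near level set $\{B_{v_0}=\langle\cdot\rangle\}$-controlled positions, and derive a contradiction with the density of $L$-directions and the existence of segments to far $L$-points guaranteed by Corollary \ref{cor_131}. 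The uniformity in $v$ itself then comes for free from this compactness formulation, since $S^1$ is compact and the failure set would have to accumulate at some $v_0$. I expect the bookkeeping of ``$t$ and $m$ both going to infinity'' to be the real content; the geometric inputs — no conjugate points, minimality of all geodesics through $p$, and the (QN2)-supplied near-radial $L$-points with slowly growing radii — are all in hand.
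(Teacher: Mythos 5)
Your plan correctly identifies (QN2) as the source of $L$-points with slowly growing radii, and your triangle-inequality reduction to \eqref{eq_plan_main} is sound as far as it goes. But you have also correctly diagnosed, and then failed to resolve, the actual difficulty: the term $d\bigl(\gamma_{p,v}(t),\sigma_{m+1}(t)\bigr)$. The convergence $\sigma_m\to\gamma_{p,v}$ is only locally uniform, and here $t$ and $m$ go to infinity together, so nothing you have established bounds this term by $o(t)$. In a general Riemannian surface (and note that at this point in the paper it is not yet known that $M$ has no conjugate points), two unit-speed geodesics from $p$ whose initial directions differ by a small angle $\theta_m$ can separate at rate much faster than $\theta_m t$; there is no comparison theorem available. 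The fallback you propose --- argue by contradiction, pass to Busemann limits, ``locate $\gamma_{p,v_k}(t_k)$ near level-set-controlled positions, and derive a contradiction with the density of $L$-directions'' --- is not an argument. Knowing $B_{v_0}(\gamma_{p,v_k}(t_k))$ up to $o(t_k)$ pins down one coordinate of the point, but says nothing about its distance to $L$ along the corresponding level set of $B_{v_0}$, which is precisely the quantity you need to control. There is no precompactness for the rescaled spaces to invoke (no curvature bounds), and the compactness of $S^1$ only addresses uniformity in $v$, not the convergence for a single $v$.

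The paper avoids this issue entirely by a different geometric construction. Using Lemma~\ref{lem_0158}, it identifies the two sides $H^{\pm}$ of $\gamma_{p,v}$ in $M$ and, via (QN2) applied to two narrow Euclidean sectors straddling direction $v$ (one tilted towards $+v^{\perp}$, one towards $-v^{\perp}$), produces $L$-points $a\in H^+$ and $b\in H^-$ both at Euclidean norm within a factor $(1+\eps)$ of $t$ and both within angle $O(\sqrt{\eps})$ of $v$. Because $a$ and $b$ lie on \emph{opposite} sides, the minimizing geodesic from $a$ to $b$ must cross $\gamma_{p,v}$ at some $\gamma_{p,v}(t_1)$; since $|a-b|\le 10\sqrt{\eps}\,t$, the crossing point satisfies $d(\gamma_{p,v}(t_1),a)\le 10\sqrt{\eps}\,t$. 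Then the Busemann function $B_v$ --- which is linear along $\gamma_{p,v}$ and whose value at $a$ is $\langle a,v\rangle\approx t$ --- shows $|t_1-t|\le 11\sqrt{\eps}\,t+|p|$, and so $d(\gamma_{p,v}(t),L)\le d(\gamma_{p,v}(t),\gamma_{p,v}(t_1))+d(\gamma_{p,v}(t_1),a)\lesssim\sqrt{\eps}\,t$. Notice that no comparison of $\gamma_{p,v}(t)$ with a nearby geodesic segment at the \emph{same} parameter $t$ is ever attempted; the intermediate point is forced to lie on $\gamma_{p,v}$ by a topological argument (Jordan separation plus minimality of geodesics through $a,b\in L$), and its parameter is then read off from $B_v$. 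This is the idea missing from your proposal, and I do not see how to complete \eqref{eq_plan_main} as written without it. (Uniformity in $v$ is then obtained in the paper by a finite covering of the relevant sectors, not by a contradiction argument.)
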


\begin{proof} Fix $v \in S^1$ and $0 < \eps < 1$. We will show that there exists $T$  such that for any $t > T$,
\begin{equation} d( \gamma_{p,v}(t), L ) \leq |p| + 21 \sqrt{\eps} \cdot t. \label{eq_654} \end{equation}
Abbreviate $\gamma = \gamma_{p,v}$. In view of Lemma \ref{lem_0158}, we may write $H^+$ and $H^-$ for the two connected components of $M \setminus \gamma$,
and conclude the existence of a unit vector $$ v^{\perp} \in S^1 $$ orthogonal
to $v$ such that (\ref{eq_710}) holds true for any $q \in L$.
Let us apply condition (QN2). It implies that there exist two sequences $(p_m^+)_{m \geq 1}$ and $(p_m^-)_{m \geq 1}$ in the discrete set $L$
such that for all $m \geq 1$,
\begin{equation}  \left \langle \frac{p_m^\pm}{|p_m^\pm|}, \pm v^{\perp} \right \rangle > \eps \qquad \text{and} \qquad \left \langle \frac{p_m^\pm}{|p_m^\pm|}, v \right \rangle
> 1 - \eps  \label{eq_749} \end{equation}
and such that $p_m^\pm \longrightarrow \infty$ while  $|p^{\pm}_{m+1}| / |p_m^{\pm}| \longrightarrow 1$. See an illustration in Figure \ref{fig1}. From (\ref{eq_710}) and (\ref{eq_749}) we conclude that $p_m^{\pm} \in H^{\pm}$ whenever $|p_m^{\pm}| > |p| / \eps$. Hence there exists $M \geq 1$ such that for all $m \geq M$,
\begin{equation}
p_m^\pm \in H^\pm \qquad \text{and} \qquad \frac{|p^{\pm}_{m+1}|}{|p_m^{\pm}|} < 1 + \eps.
\label{eq_928} \end{equation}
Set $T = \max_{1 \leq m \leq M} \max \{ |p_m^{+}|, |p_m^-| \}$. For $t > T$, define
$$ m^{\pm} = m^{\pm}(t) := \min \{ m \geq 1 \, ;\,  |p_m^{\pm}| > t \}. $$
The positive integers $m^{\pm} > M$ are well-defined as $p_m^{\pm} \longrightarrow \infty$. According to (\ref{eq_928}),
\begin{equation} a := p^+_{m^+} \in H^+ \qquad \text{and} \qquad b := p^-_{m^-} \in H^-. \label{eq_1027} \end{equation}
Since $|p_{m^{\pm}-1}^{\pm}| \leq t$, by (\ref{eq_928}) we also have
\begin{equation}
1 < \frac{|a|}{t} < 1 + \eps
\qquad \text{and} \qquad
1 < \frac{|b|}{t} < 1 + \eps.
\label{eq_942}
\end{equation}
From (\ref{eq_749}) we know that $\langle a / |a|, v \rangle > 1 - \eps$
and $\langle b / |b|, v \rangle > 1 - \eps$. Hence the Euclidean distance between $a / |a|$ and $b / |b|$ is less than $4 \sqrt{\eps}$.
From (\ref{eq_942}) we conclude that
\begin{equation}
 |a - b| \leq \left| a - \frac{|a|}{|b|}b\right| + \left| \frac{|a|}{|b|}b - b \right| \leq (1 + \eps) t \left| \frac{a}{|a|} - \frac{b}{|b|} \right|
+ (1 + \eps) t  \left| \frac{|a|}{|b|} - 1 \right| \leq 10 \sqrt{\eps} t.
\label{eq_1031_}
\end{equation}
\begin{figure}
\begin{center} \includegraphics[width=4in]{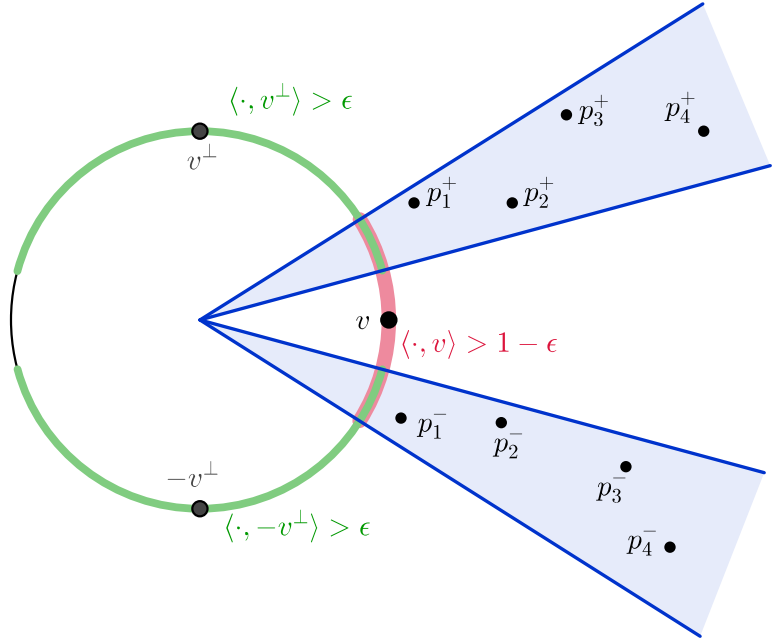} \end{center}
\caption{The $L$-points used in the proof of Proposition \ref{lem_net} \label{fig1}}
\end{figure}
Since $B_v(a) = \langle a, v \rangle$ while $\langle a / |a|, v \rangle > 1 - \eps$ we may use (\ref{eq_942}) and bound $B_v(a)$ as follows:
\begin{equation}  (1 + \eps) t \geq |a| \geq B_v(a) = |a| \langle a/ |a|, v \rangle \geq |a| (1 - \eps) \geq (1 - \eps) t. \label{eq_1022_} \end{equation}
From Proposition 3.7(iii) we know that $\gamma = \gamma_{p,v}$ is a transport line of $B_v$ with $\gamma(0) = p$.
Hence $B_v(\gamma(t)) = t + B_v(p) \in (t - |p|, t + |p|)$. Thus, from (\ref{eq_1022_}),
\begin{equation}
|B_v(a) - B_v(\gamma(t))| \leq |p| + \eps t. \label{eq_1023}
\end{equation}
It follows from (\ref{eq_1027}) that the minimizing geodesic
between $a$ and $b$ intersects the curve $\gamma$ at a point $\gamma(t_1)$ for some $t_1 \in \RR$. From (\ref{eq_1031_}),
\begin{equation}
d(\gamma(t_1), a) \leq d(a,b) = |a-b| \leq 10 \sqrt{\eps} t. \label{eq_1035}
\end{equation}
Furthermore, from (\ref{eq_1023}) and (\ref{eq_1035}),
\begin{align} |t_1 - t|  = |B_v(\gamma(t_1)) - B_v(\gamma(t))| \leq d(\gamma(t_1), a) + |B_v(a) - B_v(\gamma(t))|
 \leq 11 \sqrt{\eps} t + |p|. \label{eq_533}
\end{align}
By (\ref{eq_1035}) and (\ref{eq_533}), since $a \in L$,
$$
d(\gamma(t), L) \leq |t_1 - t| + d(\gamma(t_1), L) \leq |t_1 - t| + d(\gamma(t_1), a)
\leq |p| + 21 \sqrt{\eps} t, $$
as advertised in (\ref{eq_654}). This implies the convergence in (\ref{eq_1116}). In order to conclude that the convergence is uniform
in $v \in S^1$,
we will argue that there exists $\bar{T} > 0$ that does not depend on $v$,
such that (\ref{eq_654}) holds true for all $t > \bar{T}$ and $v \in S^1$.
To this end, we fix finitely many non-empty open sectors $S_1,\ldots,S_R \subseteq \RR^2$, such that
any set of the form
$$ A_{v, \pm } = \left \{ 0 \neq x \in \RR^2 \, ; \, \left \langle \frac{x}{|x|}, \pm v^{\perp} \right \rangle > \eps,  \left \langle  \frac{x}{|x|}, v \right \rangle > 1 - \eps \right \} \qquad \qquad (v \in S^1) $$
 contains one of these sectors. From (QN2), in each sector $S_i$ there exists
a sequence of points $p_m^{(i)} \in L \cap S_i \ (m=1,2,\ldots)$ such that
$p_m^{(i)} \longrightarrow \infty$ and $|p_{m+1}^{(i)}| / |p_m^{(i)}| \longrightarrow 1$ as $m$ tends to infinity.

\medskip Consequently, for any $v \in S^1$ and any choice of sign $\pm$ there exists $i=1,\ldots,R$ such that $p_m^{\pm} := p_m^{(i)}$
satisfy (\ref{eq_749}) while $p_m^{\pm} \longrightarrow \infty$ and $|p_{m+1}^{\pm}| / |p_m^{\pm}| \longrightarrow 1$.
We conclude that the parameter $M$ above can be chosen independent of $v$, and the parameter $T$ above
is upper bounded by some $\bar{T}$ which does not depend on $v$. This implies that the convergence in
(\ref{eq_1116}) is uniform in $v$.
\end{proof}

\begin{proposition} Let $p \in L$. Then for $x, y \in \RR^2$, writing $x = a v, y = bw $ with $v, w \in S^1$ and $a, b \geq 0$, we have
$$  \lim_{r \rightarrow \infty} \frac{d(\gamma_{p,v}( a r), \gamma_{p,w}( b r))}{r} = |x - y|, $$
and the convergence is locally uniform in $x, y \in \RR^2$.
\label{lem_445}
\end{proposition}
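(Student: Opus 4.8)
The plan is to reduce the claim to the already-established behaviour of the Busemann functions $B_v$, in two stages: first I would fix $x = av$, $y = bw$ and let $\eta_r$ be a minimizing geodesic from $\gamma_{p,v}(ar)$ to $\gamma_{p,w}(br)$, and compare $d(\gamma_{p,v}(ar),\gamma_{p,w}(br))$ with a sum of two Busemann differences along an auxiliary direction. Concretely, pick any unit vector $u \in S^1$, say $u = (x-y)/|x-y|$ if $x\ne y$ (the case $x = y$ being trivial), and use the $1$-Lipschitz function $B_u \in \partial_u M$ from Proposition \ref{lem_357}: since $B_u$ is $1$-Lipschitz,
$$ d(\gamma_{p,v}(ar),\gamma_{p,w}(br)) \;\geq\; |B_u(\gamma_{p,v}(ar)) - B_u(\gamma_{p,w}(br))|. $$
Now $\gamma_{p,v}$ is a transport line of $B_v$, not of $B_u$; but the point $\gamma_{p,v}(ar)$ lies, up to a controlled error, near the $L$-point $a_r$ produced exactly as in the proof of Proposition \ref{lem_net}, and $B_u$ on $L$-points is just the linear functional $\langle \cdot, u\rangle$. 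So the lower bound becomes, after dividing by $r$ and letting $r\to\infty$, at least $|\langle x,u\rangle - \langle y,u\rangle| = |x-y|$. This is the clean half.

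For the matching upper bound I would use the triangle inequality to bound $d(\gamma_{p,v}(ar),\gamma_{p,w}(br))$ by $d(\gamma_{p,v}(ar), q_r) + d(q_r, q_r') + d(q_r', \gamma_{p,w}(br))$ where $q_r, q_r'\in L$ are $L$-points close to $\gamma_{p,v}(ar)$ and $\gamma_{p,w}(br)$ respectively, supplied by Proposition \ref{lem_net}: we have $d(\gamma_{p,v}(ar), L) = o(r)$ and similarly for the $w$-ray, and by looking at the construction we can also arrange $q_r/|q_r| \to v$, $|q_r|/(ar)\to 1$ (and likewise $q_r' \to w$-direction), so $q_r - q_r' = (x-y)r + o(r)$ in $\RR^2$. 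Since $q_r, q_r' \in L \subseteq M$ satisfy $d(q_r,q_r') = |q_r - q_r'|$, the middle term is $|x-y|\,r + o(r)$, and the two outer terms are $o(r)$; dividing by $r$ gives the upper bound $|x-y|$.

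The main obstacle — and the place requiring care rather than a one-line citation — is getting the $L$-points $q_r$ close to $\gamma_{p,v}(ar)$ to have the correct \emph{radial} behaviour, i.e. $|q_r| = ar + o(r)$ and $q_r/|q_r|\to v$, not merely $d(\gamma_{p,v}(ar), q_r) = o(r)$. Here I would revisit the proof of Proposition \ref{lem_net}: the $L$-point $a = p^+_{m^+}$ found there already satisfies $1 < |a|/t < 1+\eps$ and $\langle a/|a|, v\rangle > 1-\eps$, which is precisely the quantitative radial control needed, with $t = ar$; letting $\eps \downarrow 0$ along a diagonal argument pins the direction and modulus of $q_r$ to $v$ and $ar$. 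The uniformity statement then follows because Proposition \ref{lem_net} already provides a modulus of convergence uniform in the direction, and $(x,y)$ ranging over a compact set of $\RR^2 \times \RR^2$ means $a, b$ stay bounded and the directions $v, w$ vary over compact subsets of $S^1$ (with the degenerate cases $a=0$ or $b=0$, where the ray is replaced by the single point $p$, handled separately and trivially since $d(p, p) = 0$); combining the uniform $o(r)$ error terms gives local uniformity of the limit in $(x,y)$.
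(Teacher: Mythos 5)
Your proposal is correct and follows essentially the same route as the paper: approximate $\gamma_{p,v}(ar)$ and $\gamma_{p,w}(br)$ by $L$-points $q_r, q_r'$ using Proposition \ref{lem_net}, show that $q_r = rx + o(r)$ and $q_r' = ry + o(r)$ in $\RR^2$, and then compute $d(q_r, q_r') = |q_r - q_r'| = r|x-y| + o(r)$. You correctly identify the crux, namely that one needs $q_r$ to track $\gamma_{p,v}(ar)$ radially and directionally in $\RR^2$, not merely $d(\gamma_{p,v}(ar), q_r) = o(r)$, but your route to it is heavier than necessary. You propose to re-open the proof of Proposition \ref{lem_net} and extract the quantitative bounds $1 < |a|/t < 1+\eps$, $\langle a/|a|, v\rangle > 1-\eps$ via a diagonal argument; the paper instead gets this from the Busemann function $B_v$ used as a black box: since $q_r \in L$, one has $\langle q_r, v\rangle = B_v(q_r)$, and because $B_v$ is $1$-Lipschitz while $\gamma_{p,v}$ is a transport line of $B_v$, $B_v(q_r) = B_v(\gamma_{p,v}(ar)) + O(d(q_r, \gamma_{p,v}(ar))) = ar + B_v(p) + o(r)$, which together with $|q_r| = ar + o(r)$ (immediate from the triangle inequality) yields $\langle q_r/|q_r|, v\rangle \to 1$, i.e.\ $q_r/|q_r| \to v$. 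Two further remarks. Your separate lower bound via $B_u$ with $u = (x-y)/|x-y|$ is redundant: once $q_r = rx+o(r)$ and $q_r'=ry+o(r)$ are established, the single triangle-inequality estimate $\bigl| d(\gamma_{p,v}(ar), \gamma_{p,w}(br)) - |q_r - q_r'| \bigr| \leq d(\gamma_{p,v}(ar), q_r) + d(\gamma_{p,w}(br), q_r') = o(r)$ gives both directions at once. And for local uniformity, the paper works with five sequences $r_m \to \infty$, $v_m\to v$, $w_m\to w$, $a_m\to a$, $b_m\to b$ and passes to a subsequence along which the limit exists, which in particular dispatches the near-degenerate regime where $a_m\to 0$ while $a_m r_m$ stays bounded; your remark that $a = 0$ is ``handled separately and trivially'' glosses over this sequential edge case, which is where the uniformity could otherwise slip.
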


\begin{proof} Abbreviate $\gamma_v = \gamma_{p,v}$. In order to prove the local uniform convergence, we
 fix no less than five sequences
$$ r_m \longrightarrow \infty, \ \   S^1 \ni w_m \longrightarrow w, \ \     S^1 \ni v_m \longrightarrow v, \ \  [0, \infty) \ni a_m \longrightarrow a, \
\ [0, \infty) \ni b_m \longrightarrow b. $$
Since $|x-y|$ is a continuous function of $x,y \in \RR^2$, the local uniform convergence would follow
once we show that
\begin{equation}  \lim_{m \rightarrow \infty} \frac{d(\gamma_{v_m}( a_m r_m), \gamma_{w_m}( b_m r_m))}{r_m} = |x - y|. \label{eq_359} \end{equation}
Passing to a subsequence, we may assume that the limit in (\ref{eq_359}), denoted by $L$, is attained as an element of $\RR \cup \{ + \infty \}$,
and our goal is to prove that $L = |x - y|$. Passing to a further subsequence, we may assume that $\lim_{m} a_m r_m$
and $\lim_{m} b_m r_m$ exist as elements in $\RR \cup \{ + \infty \}$.
First consider  the case where $\lim_m a_m r_m < \infty$. In this case
necessarily $a_m \longrightarrow 0 = a$, hence $x = 0$ and
$$
L = \lim_{m \rightarrow \infty} \frac{d(\gamma_{v_m}( a_m r_m), \gamma_{w_m}( b_m r_m))}{r_m} = \lim_{m \rightarrow \infty} \frac{d(p, \gamma_{w_m}( b_m r_m))}{r_m} = \lim_{m \rightarrow \infty} \frac{ b_m r_m}{r_m}
= b = |x-y|. $$
Therefore $L = |x-y|$  when $\sup_m a_m r_m < \infty$. Similarly, $L = |x-y|$  when $\sup_m b_m r_m < \infty$. We may thus assume that
\begin{equation}
\lim_{m \rightarrow \infty} a_m r_m = \lim_{m \rightarrow \infty} b_m r_m = +\infty. \label{eq_602_}
\end{equation}
According to Proposition \ref{lem_net}, for any $m$ we may select $p_m,q_m \in L$ with
\begin{equation}  \lim_{m \rightarrow \infty} \frac{d(p_m,  \gamma_{v_m}(a_m r_m))}{a_m r_m} = 0
\qquad \text{and} \qquad  \lim_{m \rightarrow \infty} \frac{d(q_m,  \gamma_{w_m}(b_m r_m))}{b_m r_m} = 0. \label{eq_816_}
\end{equation}
By the triangle inequality, we know that
$$
\left| \frac{|p_m-p|}{a_{m}r_{m}} - 1 \right|\leq
\frac{d(p_m, \gamma_{v_m}(a_m r_m))}{a_{m}r_{m}}
\qquad \text{and} \qquad
\left| \frac{|p_m|}{a_{m}r_{m}} - \frac{|p_m-p|}{a_{m}r_{m}} \right| \leq \frac{|p|}{a_{m}r_{m}}.
$$
Using (\ref{eq_602_}) and (\ref{eq_816_}), we have that
$|p_m| / (a_m r_m) \longrightarrow 1$
and similarly $|q_m| / (b_m r_m) \longrightarrow 1$.
Moreover, since $B_{v_m} \in \partial_{v_m}M$ by Proposition \ref{lem_357}, and $\gamma_{v_m}$ is its transport line by Proposition \ref{prop_1155}(iii),
\begin{equation}  \lim_{m \rightarrow \infty} \left \langle \frac{p_m}{|p_m|}, v_m \right \rangle =
\lim_{m \rightarrow \infty} \frac{B_{v_m}(p_m)}{|p_m|} =
\lim_{m \rightarrow \infty} \frac{B_{v_m}( \gamma_{v_m}(a_m r_m) )}{a_m r_m} = 1. \label{eq_425} \end{equation}
Since $S^1 \ni v_m \longrightarrow v$, we learn from (\ref{eq_425}) that $p_m / |p_m| \longrightarrow v$.
Similarly $q_m / |q_m| \longrightarrow w$.
Consequently,
$$
L =  \lim_{m \rightarrow \infty} \frac{|p_m - q_m|}{r_m} =
\lim_{m \rightarrow \infty} \left| a_m \frac{|p_m|}{a_m r_m}  \frac{p_m}{|p_m|} -  b_m \frac{|q_m|}{b_m r_m} \frac{q_m}{|q_m|}  \right|
= |a v - b w| = |x-y|.
$$
\end{proof}

At this point in the proof we quote the area growth theorem of Bangert and Emmerich \cite{BE2}.
We write $D_M(x, r) = \{ y \in M \, ; \, d(x,y) < r \}$, the geodesic ball of radius $r$ centered at $x$.
Theorem 1 from \cite{BE2} reads as follows:

\begin{theorem}[Bangert and Emmerich] Let $M$ be a complete, two-dimensional Riemannian manifold
without conjugate points, diffeomorphic to $\RR^2$ and let $x \in M$. Then,
$$ \liminf_{r \rightarrow \infty} \frac{\area(D_M(x, r))}{\pi r^2} \geq 1, $$
with equality if and only if $M$ is flat.
\label{thm_BE}
\end{theorem}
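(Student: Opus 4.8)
The plan is to establish Theorem~\ref{thm_BE} by Hopf's method, as the authors indicate, the point being that the area of geodesic balls is governed by the Jacobi equation along radial geodesics, and that absence of conjugate points converts the associated Riccati equation into a one-sided bound on the accumulated Gauss curvature. First I would pass to geodesic polar coordinates: since $M$ is diffeomorphic to $\RR^2$ and has no conjugate points, $\exp_x : T_x M \to M$ is a diffeomorphism, so there are global coordinates $(r,\theta) \in (0,\infty) \times S^1$ in which $g = dr^2 + f(r,\theta)^2\, d\theta^2$, where for each fixed $\theta$ the function $f$ solves the scalar Jacobi equation $\partial_r^2 f + K f = 0$ with $f(0,\theta) = 0$ and $\partial_r f(0,\theta) = 1$, and absence of conjugate points is precisely the statement that $f > 0$ on $(0,\infty) \times S^1$. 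Then $\area(D_M(x,r)) = \int_0^{2\pi}\!\int_0^r f(s,\theta)\, ds\, d\theta$, the geodesic circle $\partial D_M(x,r)$ has length $\ell(r) = \int_0^{2\pi} f(r,\theta)\, d\theta$, so $\area(D_M(x,r)) = \int_0^r \ell(s)\, ds$, and since the geodesic curvature of $\partial D_M(x,r)$ equals $\partial_r \log f$ we get $\int_{\partial D_M(x,r)} \kappa_g\, ds = \ell'(r)$, whence Gauss--Bonnet on the disc $D_M(x,r)$ gives
$$ \ell'(r) = 2\pi - \int_{D_M(x,r)} K\, dA. $$
Thus the theorem reduces to showing that geodesic circles grow at least at the Euclidean rate, $\liminf_{r\to\infty} \ell(r)/(2\pi r) \ge 1$, with equality forcing flatness: from $\ell(s) \ge (2\pi - o(1))\, s$ one integrates to $\area(D_M(x,r)) \ge (1-o(1))\,\pi r^2$.

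\textbf{The core estimate (the main obstacle).} The inequality $\liminf_{r\to\infty} \ell(r)/(2\pi r) \ge 1$ --- equivalently, via the displayed identity, $\limsup_{r\to\infty} \int_{D_M(x,r)} K\, dA \le 0$ --- is exactly where the no-conjugate-points hypothesis must be exploited, and I expect this to be the heart of the argument. Along each ray $\theta = \mathrm{const}$ the function $\phi(\cdot,\theta) = \partial_r f / f$ solves the Riccati equation $\partial_r \phi + \phi^2 + K = 0$ with $\phi \sim 1/r$ as $r \to 0^+$; because $f$ never vanishes this solution persists for all $r > 0$ and dominates the \emph{stable} Riccati solution $\phi^s(\cdot,\theta)$ --- the limit as $r_0 \to \infty$ of the Riccati solutions whose Jacobi field vanishes at $r_0$ --- which exists precisely by absence of conjugate points, and which is in turn dominated by the \emph{unstable} solution $\phi^u(\cdot,\theta)$. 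A Hopf-type comparison among $\phi$, $\phi^s$ and $\phi^u$, integrated along the geodesics and then over $\theta$, controls the sign of the accumulated curvature and yields $\limsup_{r\to\infty} \int_{D_M(x,r)} K\, dA \le 0$; feeding this back into Gauss--Bonnet gives $\liminf_{r\to\infty} \ell'(r) \ge 2\pi$, hence $\liminf_{r\to\infty} \ell(r)/(2\pi r) \ge 1$, and therefore $\liminf_{r\to\infty} \area(D_M(x,r))/(\pi r^2) \ge 1$.

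\textbf{The equality case.} If the $\liminf$ equals $1$, then the inequalities above are asymptotically sharp, so the accumulated curvature defect tends to $0$; this forces the stable and unstable Green Jacobi fields to coincide along every complete geodesic of $M$. In two dimensions, substituting $\phi = \phi^s = \phi^u$ back into $\partial_r \phi + \phi^2 + K = 0$ propagates this degeneracy to give $K \equiv 0$, so $M$ is flat --- this is the rigidity conclusion that Bangert and Emmerich extract by Hopf's argument. Conversely, if $M$ is flat then $f(s,\theta) = s$ and $\area(D_M(x,r)) = \pi r^2$ identically, so equality holds. The routine part of all this is the setup; the serious work lies in the Hopf-type Riccati comparison and in its equality analysis.
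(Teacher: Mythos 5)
First, a point of fact: the paper does not prove Theorem~\ref{thm_BE}.  It is quoted verbatim from Bangert and Emmerich \cite{BE2} --- the sentence immediately preceding the statement reads ``Theorem 1 from \cite{BE2} reads as follows'' --- and it is then invoked as a black box at the end of the proof of Theorem~\ref{thm_1217}.  So there is no in-paper argument for you to match.

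As for your proposal taken on its own terms: the setup is correct.  Polar coordinates $g = dr^2 + f^2\,d\theta^2$ around $x$ exist globally because $\exp_x$ is a diffeomorphism, the identities $\area(D_M(x,r)) = \int_0^r \ell(s)\,ds$ and $\ell'(r) = 2\pi - \int_{D_M(x,r)} K\,dA$ are right, and the inequality $\phi \geq \phi^u \geq \phi^s$ among the radial, unstable, and stable Riccati solutions along rays is standard given absence of conjugate points.  But the one step you label ``the core estimate'' --- deriving $\limsup_{r\to\infty}\int_{D_M(x,r)} K\,dA \le 0$ from ``a Hopf-type comparison among $\phi$, $\phi^s$ and $\phi^u$, integrated along the geodesics and then over $\theta$'' --- is exactly the content of the theorem, and it is asserted rather than proved.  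The structural difficulty it hides is this: Hopf's method closes up only when one integrates the Riccati inequality over the \emph{entire} unit tangent bundle $SM$ against the Liouville measure and uses the invariance of that measure under the geodesic flow to kill the $\int u'$ term.  Your sketch works only with the one-parameter family of radial geodesics from the single point $x$, which carries no comparable invariance, so there is no mechanism in what you wrote to produce the cancellation.  For the same reason the rigidity step is not there: from information only about rays through $x$ you assert that $\phi^s \equiv \phi^u$ ``along every complete geodesic of $M$,'' which is a statement about the whole geodesic flow, and then you appeal to Hopf's compact-case argument ($\int_{SM} u^2 = -\int_{SM} K$) in a non-compact setting where it does not literally apply.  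The actual Bangert--Emmerich proof has to navigate precisely these two points --- passing from a radial statement to an $SM$-level statement, and handling the non-compactness --- and they are not routine.  So the framework is sound, but the theorem's substance remains unproved in the proposal.
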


In order to prove Theorem \ref{thm_1217} it thus remains to show that $M$ has no conjugate points,
and to use the fact that the large-scale geometry of $M$ is approximately Euclidean
in order to prove that $\area(D_M(x,r)) / r^2 \longrightarrow \pi$ as $r$ tends to infinity. We remark that
there are surfaces whose large-scale geometry is approximately Euclidean, such as a compactly-supported perturbation of
the Euclidean plane, yet they have conjugate points and  consequently they are not isometric to the Euclidean plane.

\section{The ideal boundary}

Our goal in this section is to prove that $\partial_v M$ is a singleton for each $v \in S^1$.
We begin with the following:

\begin{lemma} Let $q \in L, v,w \in S^1$ and $C \in \RR$.
 Assume that for any $t > 0$,
\begin{equation}   B_v(\gamma_{q,w}(t)) \geq t + C. \label{eq_313} \end{equation}
Then $v = w$. \label{lem_307}
\end{lemma}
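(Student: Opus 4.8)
The plan is to play the assumed lower bound on $B_v$ along $\gamma_{q,w}$ against an upper bound that comes from the $1$-Lipschitz property of $B_v$ together with the asymptotically Euclidean large-scale geometry of $M$ encoded in Proposition \ref{lem_445}.

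First I would record two facts about $B_v$. By Proposition \ref{lem_357} we have $B_v \in \partial_v M$, so that $B_v(q) = \langle q, v\rangle$; and by Proposition \ref{prop_1155}(iii) the complete minimizing geodesic $\gamma_{q,v}$ is a transport line of $B_v$, whence $B_v(\gamma_{q,v}(u)) = u + \langle q, v\rangle$ for every $u \in \RR$. Using the convention $\gamma_{q,-v}(s) = \gamma_{q,v}(-s)$ this reads $B_v(\gamma_{q,-v}(s)) = -s + \langle q, v\rangle$ for $s \geq 0$. Since $B_v$ is $1$-Lipschitz, for all $t, s > 0$
$$ B_v(\gamma_{q,w}(t)) \le B_v(\gamma_{q,-v}(s)) + d\big(\gamma_{q,w}(t), \gamma_{q,-v}(s)\big) = -s + \langle q, v\rangle + d\big(\gamma_{q,w}(t), \gamma_{q,-v}(s)\big), $$
so combining with the assumption (\ref{eq_313}) and choosing $t = s = r$ gives $d(\gamma_{q,w}(r), \gamma_{q,-v}(r)) \ge 2r + C - \langle q, v\rangle$ for all $r > 0$. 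Dividing by $r$ and letting $r \to \infty$, Proposition \ref{lem_445} (applied with base point $q$, directions $w$ and $-v$, and parameters $a = b = 1$) yields
$$ |w + v| = \lim_{r\to\infty} \frac{d(\gamma_{q,w}(r), \gamma_{q,-v}(r))}{r} \ge 2. $$
Since $|w + v| \le |w| + |v| = 2$, with equality only when $w = v$, we conclude $v = w$.

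There is no serious obstacle here once Proposition \ref{lem_445} is available; the only points requiring care are the orientation-reversal identity $\gamma_{q,v}(-s) = \gamma_{q,-v}(s)$ and feeding the correct pair of unit vectors into Proposition \ref{lem_445}. It is worth noting that the alternative of trying to prove directly that $\gamma_{q,w}$ is a transport line of $B_v$ (so as to invoke Lemma \ref{lem_1005} against $\gamma_{q,v}$) seems to demand more than (\ref{eq_313}) alone delivers, whereas the large-scale comparison above is immediate: if $v \ne w$ then $d(\gamma_{q,w}(r),\gamma_{q,-v}(r)) \sim r|w+v| < 2r$, which is incompatible with $B_v$ increasing at unit speed along $\gamma_{q,w}$.
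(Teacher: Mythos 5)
Your argument is correct, and it takes a genuinely different route from the paper's. The paper works entirely on $L$: it invokes Proposition \ref{lem_net} to find lattice points $p_t$ with $d(\gamma_{q,w}(t), p_t) = o(t)$, uses the $1$-Lipschitz property of both $B_v$ and $B_w$ to conclude that $\min\{\langle p_t, v\rangle, \langle p_t, w\rangle\} \geq |p_t| + o(t)$, and then shows $p_t/|p_t|$ converges to both $v$ and $w$, forcing $v = w$. You instead work on the continuous level: you play the assumed lower bound for $B_v$ along $\gamma_{q,w}$ against the exact decay $B_v(\gamma_{q,-v}(s)) = -s + \langle q, v\rangle$, which by $1$-Lipschitzness forces $d(\gamma_{q,w}(r), \gamma_{q,-v}(r)) \geq 2r + O(1)$, and you then appeal to the asymptotic Euclidean distance formula in Proposition \ref{lem_445} to get $|w+v| \geq 2$ and hence $v = w$. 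Both arguments ultimately rest on the large-scale Euclidean picture, but yours needs only $B_v$ (never $B_w$) and uses Proposition \ref{lem_445} directly rather than passing through the $L$-point approximation of Proposition \ref{lem_net}; the paper's version avoids the orientation-reversal bookkeeping and stays closer in spirit to the ideal-boundary formalism. Each step in your argument is correct: the identity $\gamma_{q,-v}(s) = \gamma_{q,v}(-s)$ is exactly the convention fixed in the construction of $\gamma_{p,v}$, the transport-line relation $B_v(\gamma_{q,v}(u)) = u + \langle q,v\rangle$ follows from Propositions \ref{lem_357} and \ref{prop_1155}(iii), and the equality case $|w+v| = 2 \Leftrightarrow w = v$ for unit vectors is elementary.
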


\begin{proof} The geodesic $\gamma = \gamma_{q,w}$ is a transport line of $B_w$, by Proposition \ref{prop_1155}(iii).
It thus follows from (\ref{eq_313}) that there exists $C' \in \RR$ such that
\begin{equation}  \min \{ B_v(\gamma(t)), B_w(\gamma(t)) \} \geq t + C' \qquad \qquad \qquad (t > 0). \label{eq_317} \end{equation}
According to  Proposition \ref{lem_net}, for any $t > 0$ there exists $p_t \in L$ with
\begin{equation}  \lim_{t \rightarrow \infty} \frac{d(\gamma(t), p_t)}{t} = 0. \label{eq_1111} \end{equation}
Since $d(\gamma(t), \gamma(0)) = t$, we have that $| d(p_t, 0) - t| = o(t)$ as $t \rightarrow \infty$, by the triangle inequality.
Moreover, from (\ref{eq_317}), (\ref{eq_1111}), and the fact that $B_v \in \partial_v M$ and $B_w \in \partial_w M$ are  $1$-Lipschitz,
\begin{align} \nonumber \min \{ \langle p_t, v \rangle, \langle p_t, w \rangle \} & =
\min \{ B_v(p_t), B_w(p_t) \} = \min \{ B_v(\gamma(t)), B_w(\gamma(t)) \} + o(t) \\ & \geq t + o(t) = d(0,p_t) + o(t) = |p_t| + o(t).
\label{eq_320}
\end{align}
Since $|p_t| = t + o(t)$, we know that $|p_t|$ tends to infinity with $t$,
and we deduce from (\ref{eq_320}) that
\begin{equation}  \lim_{t \rightarrow \infty} \min \left \{  \left \langle \frac{ p_t}{|p_t|} , v \right \rangle, \left \langle \frac{ p_t}{|p_t|} , w \right \rangle \right \} = 1. \label{eq_1008} \end{equation}
Since $|v| = |w| = 1$, it follows from (\ref{eq_1008}) that $p_t / |p_t| \longrightarrow v$
and $p_t / |p_t| \longrightarrow w$. Hence $ v = w$.
\end{proof}

Recall that $B_{\gamma_{p,v}}$ is the Busemann function of the geodesic $\gamma_{p,v}$.
The following proposition is a step in the proof that $\partial_v M$ is a singleton.
Its  proof  requires several lemmas.

\begin{proposition} For any $q \in L, v \in S^1$ and $x \in M$ we have $B_{\gamma_{q,v}}(x) = B_v(x) - \langle q, v \rangle$.
\label{prop_322}
\end{proposition}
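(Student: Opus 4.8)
The plan is to prove the two inequalities $B_{\gamma_{q,v}}(x)\le B_v(x)-\langle q,v\rangle$ and $B_{\gamma_{q,v}}(x)\ge B_v(x)-\langle q,v\rangle$ for all $x\in M$ separately; the first is immediate and the second carries all the weight. For the first, abbreviate $\gamma=\gamma_{q,v}$. By Proposition \ref{prop_1155}(iii) together with Proposition \ref{lem_357}, $\gamma$ is a transport line of the $1$-Lipschitz function $B_v$, and since $\gamma(0)=q$ we get $B_v(\gamma(t))=t+\langle q,v\rangle$. As $B_v$ is $1$-Lipschitz, $B_v(x)\ge B_v(\gamma(t))-d(\gamma(t),x)=t+\langle q,v\rangle-d(\gamma(t),x)$; rearranging and letting $t\to\infty$ gives $B_{\gamma_{q,v}}(x)\le B_v(x)-\langle q,v\rangle$.

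For the reverse inequality I would first reduce matters to the points of $L$. Put $g:=B_{\gamma_{q,v}}+\langle q,v\rangle$, a $1$-Lipschitz function which by the previous paragraph satisfies $g\le B_v$, so $g(p)\le B_v(p)=\langle p,v\rangle$ for every $p\in L$. If one proves the matching lower bound $g(p)\ge\langle p,v\rangle$, that is
\[ B_{\gamma_{q,v}}(p)\ \ge\ \langle p-q,v\rangle \qquad \text{for all } p\in L, \]
then $g\in\partial_v M$ by Definition \ref{def_409}, hence $g\ge B_v$ because $B_v$ is the pointwise infimum over $\partial_v M$; together with $g\le B_v$ this gives $g\equiv B_v$, which is the assertion.

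To obtain the displayed bound, apply the triangle inequality through the point $\gamma_{p,v}(s)$ (which is defined since $p\in L$, and lies at distance exactly $s$ from $p$ because $\gamma_{p,v}$ is a unit-speed minimizing geodesic with $\gamma_{p,v}(0)=p$): $d(\gamma_{q,v}(t),p)\le d(\gamma_{q,v}(t),\gamma_{p,v}(s))+s$. Subtracting from $t$ and letting $t\to\infty$, the definition of the Busemann function yields $B_{\gamma_{q,v}}(p)\ge B_{\gamma_{q,v}}(\gamma_{p,v}(s))-s$ for every $s\ge 0$. Now $\gamma_{p,v}$ is a transport line of $B_v$ with $\gamma_{p,v}(0)=p$, so $B_v(\gamma_{p,v}(s))=s+\langle p,v\rangle$; writing $h:=(B_v-\langle q,v\rangle)-B_{\gamma_{q,v}}$, which is nonnegative by the first inequality and vanishes on the line $\gamma_{q,v}(\RR)$ (both sides equal $t$ at $\gamma_{q,v}(t)$), one gets $B_{\gamma_{q,v}}(\gamma_{p,v}(s))-s=\langle p-q,v\rangle-h(\gamma_{p,v}(s))$. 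Hence the displayed bound, and with it the proposition, follows once we know that $\liminf_{s\to\infty}h(\gamma_{p,v}(s))=0$, i.e. that the ``gap'' $h$ between $B_v-\langle q,v\rangle$ and the Busemann function of $\gamma_{q,v}$ decays to zero along each transport line of $B_v$ issuing from a point of $L$.

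This last claim is the main obstacle, and it is exactly where the quasi-net hypotheses must be used — they cannot be dropped, since the proposition already fails for a compactly supported perturbation of $\RR^2$ possessing conjugate points. The naive estimate $h(\gamma_{p,v}(s))\le 2\,d(\gamma_{p,v}(s),\gamma_{q,v}(\RR))$ coming from $2$-Lipschitzness is hopeless: in the Euclidean plane the two transport lines stay a fixed positive distance apart while $h\equiv 0$. Instead I would prove the quantitative statement that $d(\gamma_{q,v}(t),\gamma_{p,v}(s))$ agrees with $B_v(\gamma_{q,v}(t))-B_v(\gamma_{p,v}(s))=(t-s)+\langle q-p,v\rangle$ up to an error tending to $0$ as $t\to\infty$ with $s$ fixed, which forces $h\equiv 0$ along $\gamma_{p,v}$ and in fact gives $B_{\gamma_{q,v}}(p)=\langle p-q,v\rangle$. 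To get this one uses condition (QN2) together with Propositions \ref{lem_net} and \ref{lem_445}: both $\gamma_{q,v}(t)$ and $\gamma_{p,v}(s)$ lie on transport lines of $B_v$ through points of $L$ and can be well approximated by $L$-points, and by (QN2) one may choose the approximating $L$-points at comparable scales and in nearly the direction $v$, so that the Euclidean geometry of $L$ (as in the sandwiching argument proving Proposition \ref{lem_net}) controls the Riemannian distance with an error that is $o(1)$ rather than merely $o(t)$. Upgrading the $O(\sqrt{\eps}\,t)$-type estimates of Proposition \ref{lem_net} to this sharper $o(1)$ control along transport lines is the technical heart of the matter, and I expect it to account for the several auxiliary lemmas preceding the proof.
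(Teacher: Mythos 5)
Your first two paragraphs are essentially the paper's proof. The upper bound $B_{\gamma_{q,v}}\le B_v-\langle q,v\rangle$ is exactly Lemma~\ref{lem_459} applied with $f=B_v$, and the reduction ``it suffices to show $B_{\gamma_{q,v}}(p)\ge\langle p-q,v\rangle$ for all $p\in L$, since then $g:=B_{\gamma_{q,v}}+\langle q,v\rangle\in\partial_v M$ and hence $g\ge B_v$ by minimality'' is precisely how the paper concludes once it has established
\begin{equation*}
B_{\gamma_{q,v}}(p)=\lim_{t\to\infty}\bigl[t-d(\gamma_{q,v}(t),p)\bigr]=\langle p-q,v\rangle\qquad\text{for all }p\in L,
\end{equation*}
which is Lemma~\ref{lem_903}. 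So the structure is right; the problem is that you never actually prove this displayed identity, and your final two paragraphs are a plan, not a proof. That is a genuine gap, and it is located at exactly the step that carries the content.

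Moreover, the route you sketch for filling the gap is not the one the paper takes, and I think your diagnosis of what is needed is off. You frame the missing step as ``upgrade the $O(\sqrt{\eps}\,t)$ error in Proposition~\ref{lem_net} to $o(1)$ along transport lines.'' The paper never sharpens Proposition~\ref{lem_net}; instead it exploits a cancellation. In Lemma~\ref{lem_903} one picks, for each $t$, points $p_t,q_t\in L$ so that the minimizing geodesics from $p$ to $p_t$ and from $q$ to $q_t$ pass through $D_M(\gamma_{q,v}(t),1/t)$ (this is possible by the ``Moreover'' clause of Corollary~\ref{cor_131}), then invokes Lemma~\ref{lem_250} (which, in turn, uses Proposition~\ref{lem_net} through Lemma~\ref{lem_307}) to conclude that $p_t,q_t\rightsquigarrow v$. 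The triangle inequality then traps $d(\gamma_{q,v}(t),q)-d(\gamma_{q,v}(t),p)$ between $|p_t-q|-|p_t-p|-2/t$ and $|q_t-q|-|q_t-p|+2/t$, and both bounding quantities converge to $\langle p-q,v\rangle$ by the exact Euclidean computation already used in Proposition~\ref{lem_149}. The individual approximations are only $o(t)$, but the \emph{differences} of Euclidean norms have an exact limit; no sharpening of Proposition~\ref{lem_net} is required. Your proposed gap function $h=(B_v-\langle q,v\rangle)-B_{\gamma_{q,v}}$ and the claim $\liminf_s h(\gamma_{p,v}(s))=0$ would indeed suffice, but proving that claim seems at least as hard as the proposition itself (note $h$ is not $f_v=B^v-B_v$ and the tools of Section~5 for $f_v$ do not transfer directly), and you present no argument for it. As it stands, the proposal correctly reduces the proposition to Lemma~\ref{lem_903} but does not prove that lemma.
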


\begin{lemma}
Let $f:M\to\mathbb{R}$ be a $1$-Lipschitz function and let $\gamma:\mathbb{R}\to M$ be a transport line of	$f$. Then for any $x\in M$,
$$ B_{\gamma}(x)+f(\gamma(0))\leq f(x). $$
\label{lem_459}
\end{lemma}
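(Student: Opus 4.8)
The plan is to expand the definition of the Busemann function $B_\gamma$ of the geodesic ray $\gamma|_{[0,\infty)}$ and use the fact that $f$ grows with unit speed along $\gamma$, together with the $1$-Lipschitz property of $f$. Recall from \eqref{eq_1014} that
$$ B_{\gamma}(x) = \lim_{t \to \infty} \bigl[ t - d(\gamma(t), x) \bigr] = \sup_{t \geq 0} \bigl[ t - d(\gamma(t), x) \bigr]. $$
Since $\gamma$ is a transport line of $f$, equation \eqref{eq_1003} gives $f(\gamma(t)) - f(\gamma(0)) = t$ for all $t \geq 0$, so that $t = f(\gamma(t)) - f(\gamma(0))$. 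Substituting this into the expression above, for each $t \geq 0$ we have
$$ t - d(\gamma(t), x) = f(\gamma(t)) - f(\gamma(0)) - d(\gamma(t), x) \leq f(\gamma(t)) - f(\gamma(0)) - \bigl( f(\gamma(t)) - f(x) \bigr) = f(x) - f(\gamma(0)), $$
where the inequality uses that $f$ is $1$-Lipschitz, hence $d(\gamma(t), x) \geq |f(\gamma(t)) - f(x)| \geq f(\gamma(t)) - f(x)$.

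Taking the supremum over $t \geq 0$ on the left-hand side yields $B_{\gamma}(x) \leq f(x) - f(\gamma(0))$, which is exactly the claimed inequality $B_{\gamma}(x) + f(\gamma(0)) \leq f(x)$. Note that only the ray $\gamma|_{[0,\infty)}$ enters the definition of $B_\gamma$, so it is harmless that $\gamma$ is given as a transport line on all of $\RR$; what matters is that $\gamma|_{[0,\infty)}$ is a minimizing geodesic ray along which $f$ increases at unit speed.

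There is essentially no obstacle here: the argument is a direct two-line estimate once the definition of $B_\gamma$ is unwound. The only point requiring a moment's care is the direction of the Lipschitz inequality — one wants the lower bound $d(\gamma(t),x) \geq f(\gamma(t)) - f(x)$ rather than the absolute-value version — and the sign convention for $B_\gamma$ adopted after \eqref{eq_1014}, under which $\gamma$ is itself a transport curve of $B_\gamma$ (so in particular $B_\gamma(\gamma(0)) = 0$, consistent with taking $f = B_\gamma$ in the statement).
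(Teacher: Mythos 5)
Your proof is correct and is essentially identical to the paper's: both substitute $t = f(\gamma(t)) - f(\gamma(0))$ from the transport-line property, apply the one-sided Lipschitz bound $d(\gamma(t),x) \geq f(\gamma(t)) - f(x)$, and pass to the limit (or supremum) in $t$. The only cosmetic difference is that you work with the $\sup_{t\geq 0}$ form of $B_\gamma$ while the paper uses the $\lim_{t\to\infty}$ form; these are equivalent by the monotonicity remarked after \eqref{eq_1014}.
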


\begin{proof}
Since $\gamma$ is a transport line of $f$, we have $f(\gamma(t)) = t + f(\gamma(0))$ for all $t \in \RR$.
Since $f$ is $1$-Lipschitz, for any $x \in M$,
\begin{equation*}  B_{\gamma}(x) = \lim_{t \rightarrow \infty} [t - d(x, \gamma(t))] \leq \lim_{t \rightarrow \infty} [t - (f(\gamma(t)) - f(x))]
= f(x) - f(\gamma(0)).  \end{equation*}
\end{proof}

\begin{lemma} Let $p,q \in L$ and $v \in S^1$.
Suppose that for every $t > 0$ there is a point $q_t \in L \setminus \{ q \}$ with $|q_t| > t$ such that the geodesic segment  from $q$ to $q_t$ passes through the ball $D_M(\gamma_{p,v}(t), 1/t)$. Then $q_t \rightsquigarrow v$ (i.e., $q_t / |q_t| \longrightarrow v$ as $t \to \infty$).
\label{lem_250}
\end{lemma}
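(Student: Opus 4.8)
The plan is to exploit the near-Euclidean large-scale geometry of $M$ to pin down the direction of $q_t$. Write $\gamma = \gamma_{p,v}$ and let $z_t \in D_M(\gamma(t), 1/t)$ be a point on the minimizing geodesic segment $\sigma_t$ from $q$ to $q_t$. First I would collect the elementary distance estimates: since $\sigma_t$ is minimizing and passes through $z_t$, we have $d(q, z_t) + d(z_t, q_t) = d(q, q_t) = |q - q_t|$. Because $d(\gamma(t), z_t) \le 1/t$ and $d(p, \gamma(t)) = t$, the triangle inequality gives $d(p, z_t) = t + O(1/t)$, hence also $d(q, z_t) = t + O(1)$ using $d(p,q) = |p-q|$, so in particular $d(q,z_t) \to \infty$. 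Now I would use the transport-line property of $B_v$: since $\gamma$ is a transport line of $B_v \in \partial_v M$ by Proposition~\ref{prop_1155}(iii) and Proposition~\ref{lem_357}, we get $B_v(\gamma(t)) = t + B_v(p)$, and since $B_v$ is $1$-Lipschitz, $B_v(z_t) = t + O(1)$ as well.

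Next I would bring in Proposition~\ref{lem_net} applied along $\sigma_t$ — or more directly, observe that $q_t \in L$ already — to control $|q_t|$. The key point is to show $|q_t| = d(q_t, q) + O(|p|+|q|) \cdot(1 + o(1))$, which combined with $d(q, z_t) + d(z_t, q_t) = |q - q_t|$ and $d(q, z_t) = t(1 + o(1))$ forces a comparison. More carefully: applying Proposition~\ref{lem_net} to the ray $\gamma_{q, w_t}$ where $w_t = (q_t - q)/|q_t - q|$, or simply using that $B_v$ is $1$-Lipschitz and $B_v(q_t) = \langle q_t, v\rangle$ since $q_t \in L$, I would estimate
$$ \langle q_t, v \rangle = B_v(q_t) \ge B_v(z_t) - d(z_t, q_t) = t - d(z_t, q_t) + O(1). $$
On the other hand $B_v(q_t) = B_v(z_t) + (B_v(q_t) - B_v(z_t))$ and $|B_v(q_t) - B_v(z_t)| \le d(z_t, q_t)$, which alone is not enough; the extra leverage must come from the fact that $z_t$ lies on the minimizing segment from $q$ to $q_t$, so that $B_v$ increases at essentially unit rate from $q$ to $z_t$ (distance $t + O(1)$) and then one needs $B_v$ to keep increasing at near-unit rate from $z_t$ to $q_t$. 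I would show $d(q, q_t) = d(q, z_t) + d(z_t, q_t) \ge (t + O(1)) + d(z_t, q_t)$, while $|q_t| \le |q| + |q_t - q| = |q| + d(q, q_t)$, and $B_v(q_t) = \langle q_t, v\rangle \ge B_v(q) + (\text{increase along } \sigma_t)$. Combining $\langle q_t, v \rangle \ge d(q,q_t) + O(1) = |q_t| - O(1)$ with $\langle q_t, v\rangle \le |q_t|$ yields $\langle q_t, v\rangle / |q_t| \to 1$. Since $|v| = 1$ and $|q_t| \to \infty$ (because $|q_t| > t$), this gives $q_t / |q_t| \to v$, i.e. $q_t \rightsquigarrow v$, as desired.

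The main obstacle I anticipate is justifying that $B_v$ increases at asymptotically unit rate along the \emph{entire} segment $\sigma_t$ from $q$ all the way to $q_t$, not merely up to $z_t$. Up to $z_t$ this is forced because $d(q, z_t) \approx t \approx B_v(z_t) - B_v(q)$ with $B_v$ being $1$-Lipschitz, so there is no slack. For the remaining portion from $z_t$ to $q_t$ I would argue by a pinching/no-slack estimate: if $B_v(q_t) - B_v(z_t)$ were significantly less than $d(z_t, q_t)$, then since $\langle q_t, v\rangle = B_v(q_t)$ is bounded by $|q_t| \le |q| + d(q, z_t) + d(z_t, q_t)$, one would lose a definite fraction of $|q_t|$ in the inner product, contradicting... actually this needs $d(z_t, q_t)$ to be a non-negligible fraction of $|q_t|$, which may fail. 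A cleaner route: directly estimate $|q_t| - \langle q_t, v \rangle$. We have $\langle q_t, v\rangle = B_v(q_t) \ge B_v(\gamma(t)) - d(\gamma(t), q_t) \ge (t + B_v(p)) - (d(\gamma(t), z_t) + d(z_t, q_t)) \ge t - 1/t - d(z_t,q_t) + B_v(p)$, and $|q_t| \le |q| + d(q,q_t) = |q| + d(q,z_t) + d(z_t,q_t) \le |q| + (t + d(p,q) + 1/t) + d(z_t, q_t)$. Subtracting, $|q_t| - \langle q_t, v\rangle \le |q| + d(p,q) + 2/t - B_v(p) + 2d(z_t,q_t) - 2d(z_t,q_t)$? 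This still carries a $d(z_t,q_t)$ term with the wrong sign. Resolving this — most likely by also invoking Proposition~\ref{lem_net} along the continuation of $\sigma_t$ past $\gamma(t)$, or by using that $q_t \in L$ forces $d(0, q_t) = |q_t|$ and playing the $1$-Lipschitz bound $|B_v(q_t) - B_v(\gamma(t_1))|$ against the reference geodesic as in the proof of Proposition~\ref{lem_net} — is the technical heart of the argument, and I expect the cleanest version to mirror the $\sqrt{\eps}$-type estimates used there, ultimately giving $|q_t| - \langle q_t, v\rangle = o(|q_t|)$ and hence $q_t \rightsquigarrow v$.
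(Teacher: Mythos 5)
Your proposal has a genuine gap, and you identify it yourself but do not resolve it. The obstacle is not a technical nuisance that a $\sqrt{\eps}$ refinement or an appeal to Proposition~\ref{lem_net} along the continuation of $\sigma_t$ will fix; it is structural. Along the segment from $q$ to $z_t$ you have ``no slack'': $d(q,z_t)=t+O(1)$ and $B_v(z_t)=t+O(1)$ both hold, so $B_v$ grows at essentially unit rate there. But from $z_t$ onward to $q_t$ the $1$-Lipschitz bound gives you nothing better than $B_v(q_t)\ge B_v(z_t)-d(z_t,q_t)$, and since $|q_t|$ is merely assumed $>t$, the length $d(z_t,q_t)$ can be of the same order as $|q_t|$. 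Your intermediate claim ``$\langle q_t, v\rangle \ge d(q,q_t)+O(1) = |q_t|-O(1)$'' is therefore false; what actually follows from your inequalities is $\langle q_t,v\rangle \ge |q_t|-2d(z_t,q_t)-O(1)$, and the extra $-2d(z_t,q_t)$ cannot be absorbed. The attempt to conclude $|q_t|-\langle q_t,v\rangle = o(|q_t|)$ by direct estimation of $B_v(q_t)$ does not close, and none of the tools you float (Proposition~\ref{lem_net} along $\sigma_t$, whose direction is precisely what is unknown) offers a way to control $B_v$ on $[z_t,q_t]$.

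The paper's proof avoids evaluating $B_v$ at $q_t$ altogether. It passes to a subsequence $t_m\to\infty$ along which $q_{t_m}/|q_{t_m}|\to w$, and the aim becomes $w=v$. Since $q_{t_m}\rightsquigarrow w$, Proposition~\ref{prop_1155}(i) gives convergence of the minimizing segments $\eta_m$ (from $q$ to $q_{t_m}$) to the ray $\gamma_{q,w}$. Because $B_v$ is $1$-Lipschitz, the function $s\mapsto B_v(\eta_m(s))-s$ is non-increasing along each $\eta_m$; evaluating at $s_m$ (the parameter where $\eta_m$ meets $D_M(\gamma_{p,v}(t_m),1/t_m)$) gives a uniform lower bound $B_v(\eta_m(s_m))-s_m\ge C'$, which then propagates to every fixed $s<s_m$. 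Letting $m\to\infty$ at fixed $s$ (where $\eta_m(s)\to\gamma_{q,w}(s)$) yields $B_v(\gamma_{q,w}(s))\ge s+C'$ for all $s>0$, and Lemma~\ref{lem_307} then forces $v=w$. The two ingredients you are missing are (a) the reduction to a convergent subsequence so that Proposition~\ref{prop_1155}(i) applies, which lets you work at a fixed parameter $s$ rather than at the runaway point $q_t$, and (b) the monotonicity of $B_v(\eta_m(\cdot))-(\cdot)$, which transports the bound at $s_m$ back to bounded $s$ without ever touching the uncontrolled stretch $[z_t,q_t]$.
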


\begin{proof} Let $(t_m)_{m \geq 1}$ be an increasing sequence tending to infinity with $q_{t_m} / |q_{t_m}| \longrightarrow w \in S^{1}$. Our goal
is to prove that $w = v$.  With a slight abuse of notation we abbreviate
$$ q_m = q_{t_m}. $$
 Write $\eta_m: \RR \rightarrow M$ for the minimizing geodesic
 with $\eta_m(0) = q$ and $\eta_m(d(q,q_m)) = q_m$, uniquely determined by Corollary \ref{cor_131}.
Since $q_m \rightsquigarrow w$, it follows from Proposition \ref{prop_1155}(i) that
\begin{equation}  \eta_m \xrightarrow{m \to \infty}  \gamma_{q, w}. \label{eq_309} \end{equation}
By our assumptions, for any $m$ there exists a point $\eta_m(s_m)$  on the geodesic
segment between $q$ and $q_m$ with \begin{equation}  \eta_m(s_m) \in D_M(\gamma_{p,v}(t_m), 1 / t_m). \label{eq_403} \end{equation}
Since $d(\gamma_{p,v}(t_m), p) = t_m$ and $d(\eta_m(s_m), q) = s_m$, from (\ref{eq_403}) and the triangle inequality,
\begin{equation}  |s_m - t_m| < d(p,q) + 1/ t_m.  \label{eq_307} \end{equation}
From Proposition \ref{prop_1155}(iii), there exists $C \in \RR$ such that $B_v(\gamma_{p,v}(t)) = t + C$ for all $t \in \RR$.
Since $B_v$ is $1$-Lipschitz, from (\ref{eq_403}) and (\ref{eq_307}),
\begin{equation}  B_v(\eta_m(s_m)) \geq B_v(\gamma_{p,v}(t_m)) - \frac{1}{t_m} = t_m + C - \frac{1}{t_m} \geq s_m + C', \label{eq_303} \end{equation}
with $C' = C  - 2/ t_1 - d(p,q)$. Since $t_m \longrightarrow \infty$, we learn from (\ref{eq_307}) that $s_m
\longrightarrow \infty$ as well. Thus, for any fixed $s > 0$, there exists $m$
with $s_m > s$ and according to (\ref{eq_303}),
\begin{equation}
B_v(\eta_m(s)) - s \geq B_v(\eta_m(s_m)) - s_m \geq C'. \label{eq_308} \end{equation}
By letting $m$ tend to infinity we see from (\ref{eq_309}) and (\ref{eq_308}) that for all $s > 0$,
$$  B_v(\gamma_{q, w}(s)) \geq s + C'. $$
Lemma \ref{lem_307} now shows that $v = w$.
\end{proof}

\begin{lemma} Let $p,q \in L$ and $v \in S^{1}$. Then,
\begin{equation}  \lim_{t \rightarrow \infty} [t - d(\gamma_{q, v}(t), p)] = \langle p - q, v \rangle. \label{eq_143} \end{equation}
\label{lem_903}
\end{lemma}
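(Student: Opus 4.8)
Write the limit in question as $B_{\gamma_{q,v}}(p)$, the Busemann function of the ray $\gamma_{q,v}([0,\infty))$; it exists because $t\mapsto t-d(\gamma_{q,v}(t),p)$ is non‑decreasing. By Proposition \ref{lem_357} we have $B_v\in\partial_v M$, and by Proposition \ref{prop_1155}(iii) the complete minimizing geodesic $\gamma_{q,v}$ is a transport line of $B_v$; since $\gamma_{q,v}(0)=q\in L$ this gives $B_v(\gamma_{q,v}(t))=t+\langle q,v\rangle$. As $B_v$ is $1$‑Lipschitz and $B_v(p)=\langle p,v\rangle$ (because $p\in L$), for all large $t$
$$ d(\gamma_{q,v}(t),p)\ \ge\ \bigl|B_v(\gamma_{q,v}(t))-B_v(p)\bigr|\ =\ t-\langle p-q,v\rangle , $$
so $B_{\gamma_{q,v}}(p)\le\langle p-q,v\rangle$. (Alternatively, apply Lemma \ref{lem_459} with $f=B_v$.) It remains to prove the reverse inequality, equivalently the upper bound $d(\gamma_{q,v}(t),p)\le t-\langle p-q,v\rangle+o(1)$.

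\textbf{The decisive bound, granting a directional claim.}
Fix $t$ large, so that $\gamma_{q,v}(t)\neq p$. By Corollary \ref{cor_131}, $\gamma_{q,v}(t)$ is joined to $p$ by a unique minimizing geodesic, and by Lemma \ref{lem_1008} this geodesic is an initial arc $\gamma_{p,v_t}|_{[0,L_t]}$ for a unique $v_t\in S^1$, where $L_t:=d(p,\gamma_{q,v}(t))$ and $\gamma_{p,v_t}(L_t)=\gamma_{q,v}(t)$. The crux is the assertion that
$$ v_t\ \xrightarrow{\ t\to\infty\ }\ v . $$
Granting this, by Proposition \ref{lem_357} and Proposition \ref{prop_1155}(iii) we have $B_{v_t}\in\partial_{v_t}M$ and $\gamma_{p,v_t}$ is a transport line of $B_{v_t}$, so, using $p,q\in L$ and that $B_{v_t}$ is $1$‑Lipschitz with $d(q,\gamma_{q,v}(t))=t$,
$$ L_t+\langle p,v_t\rangle\ =\ B_{v_t}(\gamma_{p,v_t}(L_t))\ =\ B_{v_t}(\gamma_{q,v}(t))\ \le\ B_{v_t}(q)+d(q,\gamma_{q,v}(t))\ =\ \langle q,v_t\rangle+t . $$
Hence $d(\gamma_{q,v}(t),p)=L_t\le t-\langle p-q,v_t\rangle$, and since $v_t\to v$ this is $t-\langle p-q,v\rangle+o(1)$. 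Combined with the first paragraph, $B_{\gamma_{q,v}}(p)=\langle p-q,v\rangle$.

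\textbf{Proof of $v_t\to v$ — the main obstacle.}
This is where Lemma \ref{lem_250} enters. For each large $t$, the ray $\gamma_{p,v_t}([0,\infty))$ emanates from $p\in L$ and passes through $\gamma_{q,v}(t)=\gamma_{p,v_t}(L_t)$, so by the ``Moreover'' part of Corollary \ref{cor_131} there are points of $L$ arbitrarily far out on this ray, reached from $p$ by minimizing geodesic segments that converge to $\gamma_{p,v_t}$ — in particular whose initial velocities converge to $\dot\gamma_{p,v_t}(0)$. Choosing one such point $q_t\in L\setminus\{p\}$ far enough out, we may arrange simultaneously that $|q_t|>t$, that the segment from $p$ to $q_t$ meets $D_M(\gamma_{q,v}(t),1/t)$, and — writing $w_t:=(q_t-p)/|q_t-p|$, so that this segment is an initial arc of $\gamma_{p,w_t}$ by Lemma \ref{lem_906} — that $|\dot\gamma_{p,w_t}(0)-\dot\gamma_{p,v_t}(0)|<1/t$. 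Applying Lemma \ref{lem_250} with the roles of $p$ and $q$ interchanged (the ball now centred on $\gamma_{q,v}$), we obtain $q_t\rightsquigarrow v$, hence $w_t\to v$, hence $\dot\gamma_{p,w_t}(0)\to\dot\gamma_{p,v}(0)$ by the continuity in Proposition \ref{prop_1155}(ii). Therefore $\dot\gamma_{p,v_t}(0)\to\dot\gamma_{p,v}(0)$ as well, and since $v\mapsto\dot\gamma_{p,v}(0)$ is a homeomorphism of $S^1$ onto $S_pM$ (Lemma \ref{lem_1008}), we conclude $v_t\to v$.

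\textbf{Remark on the difficulty.}
Everything outside the convergence $v_t\to v$ is a one‑line computation with $1$‑Lipschitz functions evaluated at the $L$‑points $p$ and $q$. The content, and the reason Lemma \ref{lem_250} is needed, is precisely that the minimizing geodesics from $p$ to the receding points $\gamma_{q,v}(t)$ must asymptotically point in the direction $v$; a naive triangle‑inequality estimate only yields $d(\gamma_{q,v}(t),p)=t-\langle p-q,v\rangle+O(1)$, which is not sharp enough to pin down the Busemann value.
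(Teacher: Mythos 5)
Your proof is correct, and it follows a genuinely different route from the paper's, while relying on the same two technical workhorses: the ``Moreover'' part of Corollary~\ref{cor_131} (rays from $L$-points are limits of $L$-segments) and Lemma~\ref{lem_250} (such segments drift to $v$). The paper chooses \emph{two} families of $L$-points, one whose segments emanate from $p$ and one from $q$, both passing through a shrinking ball around $\gamma_{q,v}(t)$; it applies Lemma~\ref{lem_250} to both, and then sandwiches the purely metric quantity $d(\gamma_{q,v}(t),q)-d(\gamma_{q,v}(t),p)$ between the Euclidean expressions $|p_t-q|-|p_t-p|-2/t$ and $|q_t-q|-|q_t-p|+2/t$, each of which tends to $\langle p-q,v\rangle$. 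You instead split the argument into two asymmetric halves: the lower bound on $d(\gamma_{q,v}(t),p)$ is a one-liner from the $1$-Lipschitz property of $B_v$ together with the transport-line property of $\gamma_{q,v}$ (this is exactly Lemma~\ref{lem_459} with $f=B_v$), while the upper bound comes from identifying the direction $v_t$ of the minimizing geodesic from $p$ to $\gamma_{q,v}(t)$, proving $v_t\to v$ via a single family of $L$-points, Lemma~\ref{lem_250}, and the homeomorphism of Lemma~\ref{lem_1008}, and then exploiting the transport-line structure of $\gamma_{p,v_t}$ relative to $B_{v_t}$. Your version isolates clearly which half of the Busemann limit is ``free'' and which forces a convergence of directions, and it needs only one auxiliary family of $L$-points; the paper's version is more symmetric, stays entirely within triangle-inequality estimates and Euclidean arithmetic, and avoids the detour through $v_t$ and the continuity and injectivity of $v\mapsto\dot\gamma_{p,v}(0)$. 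Both routes are valid and of comparable length; one small quibble is that your concluding remark, that ``a naive triangle-inequality estimate only yields $O(1)$,'' slightly undersells the paper's method, which \emph{is} a triangle-inequality argument, just a sharp one.
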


\begin{proof} Abbreviate $\gamma = \gamma_{q,v}$ and let $T > 0$ be such that $\gamma(t) \neq p$
for $t > T$.
For $t > T$, the geodesic ray emanating from $p$ (or from $q$) that passes through $\gamma(t)$ may be approximated arbitrarily well by a geodesic ray from $p$ (or from $q$)
that passes through a faraway point of $L$, according to
Corollary \ref{cor_131}.
Thus there exist $$ p_{t} \in L \setminus \{p \} \qquad \text{and} \qquad q_t \in L \setminus \{ q \} $$
with $\min \{ |p_{t}|, |q_t| \} > t$ such that the following property holds:
 The geodesic segment from $p$ to $p_{t}$ passes through a point $y_t \in D_M(\gamma(t), 1/t)$,
 while the geodesic segment from $q$ to $q_t$ passes through a point $z_t \in D_M(\gamma(t), 1/t)$.
 Lemma \ref{lem_250} thus implies that
\begin{equation}  \lim_{t \rightarrow \infty} \frac{p_{t}}{|p_{t}|} = \lim_{t \rightarrow \infty} \frac{q_{t}}{|q_{t}|} = v.
\label{eq_338} \end{equation}
From the triangle inequality,
\begin{equation} d(p_{t}, q) - d(p_{t}, p) = d(p_{t}, q) - d(p_{t}, y_{t}) - d(y_{t}, p) \leq d(y_{t}, q) - d(y_{t}, p),
\label{eq_1134} \end{equation}
and
\begin{equation} d(q_{t}, q) - d(q_{t}, p) = d(q_{t}, z_{t}) + d(z_{t}, q) - d(q_{t},  p) \geq d(z_{t}, q) - d(z_{t}, p).
\label{eq_1134_} \end{equation}
Since $y_{t}, z_{t} \in D_M(\gamma(t), 1/t)$ we obtain from (\ref{eq_1134}) and (\ref{eq_1134_}) that
 for all $t > T$,
\begin{equation}  |p_{t} - q| - |p_{t} - p| - \frac{2}{t} \leq d(\gamma(t), q) - d(\gamma(t), p) \leq
|q_{t} - q| - |q_{t} - p| + \frac{2}{t}.  \label{eq_1146}
\end{equation}
However, from (\ref{eq_338}) we deduce that  both the left-hand side and the right-hand side of (\ref{eq_1146}) tend
to $\langle p - q, v \rangle$ as $t \rightarrow \infty$. Since $d(\gamma(t), q) = t$  we obtain
(\ref{eq_143}) by letting $t \rightarrow \infty$ in (\ref{eq_1146}).
\end{proof}

\begin{proof}[Proof of Proposition \ref{prop_322}] The Busemann function $\tilde{B} := B_{\gamma_{q,v}}$ is a $1$-Lipschitz
function. By Proposition \ref{prop_1155}(iii) we know that $\gamma_{q,v}$ is a transport line of $B_v$.
Hence, by Lemma \ref{lem_459} for $f = B_v$,
\begin{equation}  \tilde{B}(x) + \langle q, v \rangle = \tilde{B}(x) + B_{v}(q) \leq B_v(x) \qquad \qquad \qquad \text{for all} \ x \in M. \label{eq_502} \end{equation}
However, from Lemma \ref{lem_903}, for any $p \in L$,
\begin{equation}  \tilde{B}(p) = \lim_{t \rightarrow \infty} [t - d(\gamma_{q, v}(t), p)] = \langle p - q, v \rangle. \label{eq_1154_} \end{equation}
We conclude from (\ref{eq_1154_}) that the $1$-Lipschitz function
$$ x \mapsto \tilde{B}(x) + \langle q, v \rangle \qquad \qquad \qquad (x \in M) $$ belongs to $\partial_v M$.
This function is bounded from above by $B_v$, according to (\ref{eq_502}). From the definition (\ref{eq_341}) of $B_v$ as the smallest
element in $\partial_v M$, we conclude that $ B_v \equiv \tilde{B} + \langle q, v \rangle$.
\end{proof}

It follows from (\ref{eq_639}) and from the fact that $B_v$ is the minimal element in $\partial_v M$
that the {\it maximal} element in $\partial_{v} M$ satisfies
\begin{equation}
B^v(x) := \sup_{B \in \partial_v M} B(x) = -\inf_{B \in \partial_{-v} M} B(x) = -B_{-v}(x) \qquad \qquad (x \in M).
\label{eq_422}
\end{equation}
Since $B_{-v} \in \partial_{-v} M$ by Proposition \ref{lem_357}, we learn from (\ref{eq_639}) and (\ref{eq_422}) that
indeed
$$ B^v \in -\partial_{-v} M = \partial_v M. $$
For $v \in S^1$ we denote
$$ f_v = B^v - B_v. $$
The function $f_v: M \rightarrow \RR$ is clearly non-negative.
By (\ref{eq_422}) it is also evident that $f_v=f_{-v}$.

\begin{lemma} Let $p \in L, v \in S^1, \eps > 0$ and let $x \in M$ satisfy $f_v(x) < \eps$. Then there exists $t_0 > 0$ with the following property:
For any $t > t_0$ and for any point $y \in M$ lying on a minimizing geodesic segment connecting $x$ and $\gamma_{p, v}(t)$, we have
$$ f_v(y) < \eps. $$
\label{lem_101}
\end{lemma}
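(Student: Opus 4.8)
The plan is to bound $f_v(y) = B^v(y) - B_v(y)$ from above by combining the $1$-Lipschitz property of the two extreme functions $B_v, B^v \in \partial_v M$ with the asymptotic control of $d(x, \gamma_{p,v}(t))$ supplied by Proposition \ref{prop_322}. The key point is that, although $f_v$ need not be monotone along the segment, the relevant endpoint values are pinned down explicitly: $B^v$ starts at $B^v(x)$, while $B_v$ ends at $B_v(\gamma_{p,v}(t)) = t + \langle p, v \rangle$, and these fit together so that the large parameter $t$ cancels.

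First I would assemble three ingredients. (1) Both $B_v$ (Proposition \ref{lem_357}) and $B^v = -B_{-v}$ (see (\ref{eq_422})) belong to $\partial_v M$ and are $1$-Lipschitz. (2) By Proposition \ref{prop_1155}(iii), $\gamma_{p,v}$ is a transport line of $B_v$; since $\gamma_{p,v}(0) = p$ and $B_v \in \partial_v M$, this gives $B_v(\gamma_{p,v}(t)) = t + B_v(p) = t + \langle p, v \rangle$ for all $t$. (3) By Proposition \ref{prop_322}, the Busemann function satisfies $B_{\gamma_{p,v}}(x) = B_v(x) - \langle p, v \rangle$; as $t - d(\gamma_{p,v}(t), x)$ increases to this limit, we may write $d(x, \gamma_{p,v}(t)) = t - B_v(x) + \langle p, v \rangle + \delta(t)$ with $\delta(t) \to 0$ as $t \to \infty$.

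Next, for a given $t > 0$ I would parametrize a minimizing geodesic segment $\sigma : [0,\ell] \to M$ with $\sigma(0) = x$ and $\sigma(\ell) = \gamma_{p,v}(t)$, where $\ell = d(x, \gamma_{p,v}(t))$, and take $y = \sigma(s)$. The $1$-Lipschitz estimates for $B^v$ and $B_v$ give $B^v(y) \le B^v(x) + s$ and $B_v(y) \ge B_v(\gamma_{p,v}(t)) - (\ell - s)$, hence
\[
f_v(y) \;\le\; B^v(x) + s - B_v(\gamma_{p,v}(t)) + (\ell - s) \;=\; B^v(x) - B_v(\gamma_{p,v}(t)) + \ell .
\]
Substituting (2) and (3), the contributions $t$ and $\langle p, v \rangle$ cancel, leaving $f_v(y) \le B^v(x) - B_v(x) + \delta(t) = f_v(x) + \delta(t)$. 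Since $f_v(x) < \eps$ and $\delta(t) \to 0$, I would pick $t_0$ so that $\delta(t) < \eps - f_v(x)$ for all $t > t_0$; then $f_v(y) < \eps$ for every $y$ on every minimizing segment from $x$ to $\gamma_{p,v}(t)$, which is exactly the claim.

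I do not expect a genuine obstacle: this is a short sandwich estimate, and its only substantive input is Proposition \ref{prop_322}, which is precisely what converts the otherwise opaque quantity $d(x, \gamma_{p,v}(t)) - t$ into an explicit asymptotic expression. The one point requiring care is the bookkeeping of the additive constant $\langle p, v \rangle = B_v(p)$, which enters both through the transport-line normalization $B_v(\gamma_{p,v}(t)) = t + \langle p,v\rangle$ and through the Busemann asymptotics, and must cancel for the final bound $f_v(x) + \delta(t)$ to emerge.
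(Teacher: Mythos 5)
Your proof is correct, and it takes a genuinely different route from the paper's. The paper works with the \emph{two-sided} triangle deficit $\delta(s,\cdot) = d(\gamma(s),\cdot) + d(\gamma(-s),\cdot) - 2s$ (with $\gamma = \gamma_{p,v}$): it first identifies $\lim_{s\to\infty}\delta(s,x) = f_v(x)$ via Proposition \ref{prop_322} and (\ref{eq_422}), then observes that $\delta(t,y)\le\delta(t,x)$ whenever $y$ lies on the minimizing segment from $x$ to $\gamma(t)$, and finally exploits the monotonicity of $\delta(s,y)$ in $s$ to conclude $f_v(y)\le\delta(t,y)$. You instead run a \emph{one-sided} Lipschitz sandwich on the segment $\sigma$ from $x$ to $\gamma(t)$: the $1$-Lipschitz bound on $B^v$ anchored at the near endpoint $x$, the $1$-Lipschitz bound on $B_v$ anchored at the far endpoint $\gamma(t)$, plus the transport-line normalization $B_v(\gamma(t)) = t + \langle p,v\rangle$ and the Busemann asymptotics from Proposition \ref{prop_322} for $\ell = d(x,\gamma(t))$. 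Both arguments hinge on Proposition \ref{prop_322} in an essential way, and both are at bottom applications of the triangle inequality, but the packaging is different. Your version avoids any reference to the backward ray $\gamma(-t)$ and to monotonicity of the deficit; in fact, writing $\delta(t,x) = f_v(x) + \delta^+(t) + \delta^-(t)$ for the forward and backward single-sided Busemann deficits, your bound $f_v(y)\le f_v(x)+\delta^+(t)$ is slightly tighter than the paper's $f_v(y)\le \delta(t,x)$. Both reach the desired conclusion in the same way: pick $t_0$ so the error term is below $\eps - f_v(x)$. One small bookkeeping remark worth making explicit in a final write-up: the direction of the Lipschitz bound at $\gamma(t)$ is $B_v(y) \ge B_v(\gamma(t)) - d(y,\gamma(t))$, which is what makes the $t$'s cancel rather than add; you have this right.
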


\begin{proof} Abbreviate $\gamma = \gamma_{p, v}$, and for $s > 0$ and $x \in M$ denote
$$ \delta(s,x) = d(\gamma(s), x) + d(\gamma(-s), x) - 2 s, $$
the deficit in the triangle inequality. 
The function $\delta(s,x)$ is non-increasing in $s$. Moreover,
since $ \gamma_{p, -v}(s) = \gamma_{p, v}(-s)$ by Proposition \ref{prop_1155}(ii), we deduce that for any $x \in M$,
$$ \lim_{s \rightarrow \infty} \delta(s,x) = -B_{\gamma_{p, v}}(x) -B_{\gamma_{p, -v}}(x) = -B_v(x) - B_{-v}(x) = B^v(x) - B_v(x) = f_v(x), $$
where we used Proposition \ref{prop_322} and (\ref{eq_422}) in the last passages.
Since $\delta(t,x) \searrow f_v(x) < \eps$ as $t \rightarrow \infty$,
there exists $t_0$ such that for any $t > t_0$,
\begin{equation}  \delta(t,x) < \eps. \label{eq_839} \end{equation}
Fix $t > t_0$. Since the point $y$ lies on a minimizing geodesic connecting $x$ and $\gamma(t)$,
it follows from (\ref{eq_839}) and the triangle inequality that
\begin{align*} \delta(t, y) & = d(\gamma(t), y) + d(\gamma(-t), y) - 2 t = d(\gamma(t), x) - d(x,y) + d(\gamma(-t), y) - 2t
\\ & \leq d(\gamma(t), x) + d(\gamma(-t), x) - 2t = \delta(t,x) < \eps.
\end{align*}
However, $\delta(s,y)$ is non-increasing in $s$. Therefore,
\begin{equation*} f_v(y) = \lim_{s \rightarrow \infty} \delta(s, y) \leq \delta(t,y) < \eps.  \end{equation*}
\end{proof}

In the proof of the following proposition we rely  on the fact that any geodesic through a point in $L$
is minimizing, according to Corollary \ref{cor_131}.

\begin{proposition} For any $v \in S^1$ we have $B^v \equiv B_v$ and hence $\partial_v M$ is a singleton.
\label{prop_313}
\end{proposition}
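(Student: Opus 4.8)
The plan is to prove the equivalent statement that the non-negative function $f_v = B^v - B_v$ vanishes identically on $M$; the conclusion that $\partial_v M = \{ B_v \}$ is then immediate, since $B_v$ and $B^v$ are the smallest and largest elements of $\partial_v M$. Two elementary observations come first. For each $q \in L$ the geodesic $\gamma_{q,v}$ is a transport line of $B_v$ (Proposition \ref{prop_1155}(iii) and Proposition \ref{lem_357}), so $B_v(\gamma_{q,v}(t)) = t + \langle q, v \rangle$; moreover $\gamma_{q,v}$ reversed is $\gamma_{q,-v}$ (Proposition \ref{prop_1155}(ii)), which is a transport line of $B_{-v}$, and since $B^v = -B_{-v}$ this yields $B^v(\gamma_{q,v}(t)) = t + \langle q, v \rangle$ as well. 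Hence $f_v \equiv 0$ on $\gamma_{q,v}$ for every $q \in L$. Second, since $B^v$ and $B_v$ are $1$-Lipschitz and coincide on the closed set $Z := \{ f_v = 0 \}$, we get $f_v(x) \le 2\, d(x, Z)$ for every $x \in M$; in particular $f_v(x) \le 2\, d(x, \gamma_{q,v})$ for each $q \in L$, so it would suffice to know that $\bigcup_{q \in L} \gamma_{q,v}$ is dense in $M$. That density is not obvious, and Lemma \ref{lem_101} is what lets one get around it.

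Fix $\eps > 0$ and put $U_\eps = \{ x \in M : f_v(x) < \eps \}$, an open set containing $L$ and every $\gamma_{q,v}$. It is enough to show $U_\eps = M$ for all $\eps$, for then $f_v \le \eps$ everywhere, hence $f_v \equiv 0$. The engine is Lemma \ref{lem_101}: if $x \in U_\eps$ then for every $p \in L$ there is a threshold $t_0 = t_0(x,p,\eps)$ such that every minimizing segment from $x$ to $\gamma_{p,v}(t)$ with $t > t_0$ lies inside $U_\eps$; thus $U_\eps$ is stable under attaching the ``cones'' $C_p(x) = \bigcup_{t > t_0} [x, \gamma_{p,v}(t)]$. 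Since $f_v = f_{-v}$, the same applies with $-v$, so $U_\eps$ is also stable under the analogous cones built from the points $\gamma_{p,v}(-t)$ with $t$ large.

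The remaining task is to show that, starting from the geodesics $\gamma_{q,v} \subseteq U_\eps$ and repeatedly attaching such cones, one exhausts $M$. Fix $y \in L$ and a point $z \neq y$ off $\gamma_{y,v}$. By Corollary \ref{cor_131}, $z$ lies on a unique minimizing geodesic ray from $y$, whose direction differs from $\pm \dot\gamma_{y,v}(0)$; by Lemma \ref{lem_0158} and Lemma \ref{lem_906} this ray meets $\gamma_{p,v}$, say at $\gamma_{p,v}(t_{\mathrm{cross}})$, for every $p \in L$ on the side of $\gamma_{y,v}$ towards which the ray points. Using Proposition \ref{lem_445}, which makes the large-scale geometry of $M$ Euclidean, one checks that when $\langle p, v^{\perp} \rangle$ is suitably close to (but just beyond) the transverse position of $z$, the crossing point lies beyond $z$ on the ray while $t_{\mathrm{cross}}$ exceeds $t_0(y,p,\eps)$; indeed in the Euclidean model the threshold behaves like $\langle p - y, v^{\perp} \rangle^2/\eps$, quadratic in the transverse gap, whereas $t_{\mathrm{cross}}$ is only linear in it, so the inequality holds whenever $z$ is within a distance of $y$ that depends only on $\eps$ and on the direction. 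Thus $z \in C_p(y) \subseteq U_\eps$, so $U_\eps$ contains a full neighborhood of $\gamma_{y,v}$, and iterating the cone operation advances a definite amount in the direction transverse to $v$ at each stage while covering every slab transverse to $v$ in its entirety; hence $U_\eps = M$.

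The main obstacle is this last step: one must verify, without the explicit Euclidean formulas, that the crossing parameter of a geodesic ray with a transport line $\gamma_{p,v}$ outruns the threshold produced by Lemma \ref{lem_101}, and then assemble the iterated cone attachments into a bona fide covering of $M$. This is where the asymptotically Euclidean estimate of Proposition \ref{lem_445}, the sublinear bound of Proposition \ref{lem_net}, the density of the directions $\{ (q-p)/|q-p| : q \in L \}$ in $S^1$ from Lemma \ref{lem_121} (ultimately condition (QN2)), and the two-dimensional topology underlying Lemma \ref{lem_118}, Lemma \ref{lem_0158} and Corollary \ref{cor_131} all have to be combined.
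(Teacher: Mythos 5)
Your plan shares the paper's starting point---observing that $f_v = B^v - B_v$ vanishes on each $\gamma_{q,v}$ for $q\in L$ and using Lemma \ref{lem_101} to propagate small $f_v$ along geodesic segments toward $\gamma_{p,v}$---but there is a genuine gap at the final step, and you have half-noticed it yourself. Your iterative cone-attachment scheme requires that the crossing parameter $t_{\mathrm{cross}}$ of the geodesic ray from $y$ through $z$ with $\gamma_{p,v}$ eventually exceed the threshold $t_0(y,p,\eps)$ supplied by Lemma \ref{lem_101}, and you justify this with a Euclidean heuristic claiming $t_0 \sim \langle p-y, v^\perp\rangle^2/\eps$. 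But Lemma \ref{lem_101} is purely existential: its threshold $t_0$ is determined by how quickly $\delta(t,x) = d(\gamma(t),x) + d(\gamma(-t),x) - 2t$ decreases to $f_v(x)$, and neither the lemma nor Propositions \ref{lem_net} and \ref{lem_445} give any quantitative control over this rate or over how $t_0$ depends on $p$. Since $t_0(y,p,\eps)$ and $t_{\mathrm{cross}}$ both blow up as $p$ approaches the ray direction, a comparison between them cannot be extracted from the tools available, so the iteration has no verified base for advancing.

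The paper circumvents this entirely. Instead of arranging for $x$ to lie on a segment $[q,\gamma_{p,v}(t)]$ with $t$ \emph{beyond} the Lemma \ref{lem_101} threshold, it finds $p,q\in L$ with $x\in\btwn(\gamma_{p,v},\gamma_{q,v})$ (Lemma \ref{lem_118}), locates a $t_0$ with $x\in[q,\gamma_{p,v}(t_0)]$ using a continuity-of-angle argument (Proposition \ref{lem_519}(ii) and Proposition \ref{prop_322}), and then picks $t_-<t_0<t_+$ \emph{both} past the Lemma \ref{lem_101} threshold, so the two slanted edges $\tau_\pm$ of the geodesic triangle $T$ with vertex $q$ and third side on $\gamma_{p,v}$ lie in $U_\eps$, while $x$ sits in the interior of $T$. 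The closing move is the one you do not have: $B^v$ foliates, so there is a transport line $\eta$ of $B^v$ through $x$; along $\eta$, $f_v$ is nondecreasing (because $B^v$ increases at unit rate while $B_v$ is $1$-Lipschitz); $\eta$ cannot meet $\gamma_{p,v}$ (a disjoint transport line of $B^v$, by Lemma \ref{lem_1005}), so it must exit $T$ through $\tau_+\cup\tau_-$, where $f_v<\eps$. Hence $f_v(x)<\eps$. This monotonicity-along-a-$B^v$-transport-line device is exactly what eliminates the need for any quantitative estimate on the Lemma \ref{lem_101} threshold, and it is the piece your argument is missing.
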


\begin{proof} Let $x \in M$ and $\eps > 0$. Our goal is to prove that $f_v(x) = B^v(x) - B_v(x) < \eps$.
To this end
we use Lemma \ref{lem_118}, according to which there exist $p, q \in L$ such that
\begin{equation}  x \in S:= \btwn(\gamma_{p, v}, \gamma_{q,v}). \label{eq_918} \end{equation}
We claim that:
\addtocounter{equation}{1}
\newcounter{eq_1235}
\setcounter{eq_1235}{\value{equation}}
\begin{enumerate}
\item[(\arabic{eq_1235})]
The geodesic from $q$ to any point in $\gamma_{p,v}$ is pointing into $S$
at the point $q$.
\addtocounter{equation}{1}
\newcounter{eq_1253}
\setcounter{eq_1253}{\value{equation}}
\item[(\arabic{eq_1253})] The geodesic segment from $q$ to $x$ is pointing into  $S$ at the point $q$, and it does not intersect $\gamma_{p,v}$.
\end{enumerate}
Indeed, by Corollary \ref{cor_131}, any complete geodesic through $q$
which is not $\gamma_{q, v}$ cannot intersect $\gamma_{q, v} \setminus \{ q \}$,
and (\arabic{eq_1235}) follows. We learn from (\ref{eq_918}) that the geodesic from $q$ to $x$
 cannot intersect $\gamma_{q,v} \setminus \{ q \}$, hence it is pointing into $S$ at the point $q$. Moreover, this geodesic cannot cross
 $\gamma_{p,v}$ twice, and it ends at a point in $S$, and therefore this geodesic segment cannot intersect $\gamma_{p,v}$ at all.
 This proves (\arabic{eq_1253}).

\medskip Proposition \ref{prop_322} tells us that $B_v = B_{\gamma_{p,v}} + \langle p,v \rangle$ and that $B_{-v} = B_{\gamma_{p, -v}} - \langle p, v \rangle$. Since $\gamma_{p,-v}(t) = \gamma_{p,v}(-t)$, this means that for any $y \in M$,
\begin{equation} B_v(y) = \lim_{t \rightarrow \infty} [C_t - d(y, \gamma_{p,v}(t))] \qquad \text{and} \qquad
B_{-v}(y) = \lim_{t \rightarrow \infty} [C_t' - d(y, \gamma_{p,v}(-t))] \label{eq_1236_} \end{equation}
for $C_t = t + \langle p,v \rangle$ and $C_t' = t - \langle p,v \rangle$. Recall that $B_v$ and $B_{-v}$ foliate by Proposition \ref{lem_357}, and that
$\nabla B_v(q) = \dot{\gamma}_{q,v}(0) = -\dot{\gamma}_{q,-v}(0) = -\nabla B_{-v}(q)$ thanks to Proposition \ref{prop_1155}. In view of (\ref{eq_1236_}) we may invoke Proposition \ref{lem_519}(ii)
and conclude the following: The minimizing geodesic from $q$ to $\gamma_{p,v}(t)$ tends to $\gamma_{q,v}$ as $t \rightarrow \infty$,
and the minimizing geodesic from $q$ to $\gamma_{p, v}(-t)$ tends to $\gamma_{q, -v}$ as $t \rightarrow \infty$.

\medskip In other words, the angle with $\gamma_{q,v}$ of the geodesic from $q$ to $\gamma_{p,v}(t)$ tends to zero as $t \rightarrow \infty$,
and the angle with $\gamma_{q,v}$ of the geodesic from $q$ to $\gamma_{p,v}(-t)$ tends to $\pi$ as $t \rightarrow \infty$.

\medskip {\it Claim:} There exists $t_0 \in \RR$ such that $x$ lies on the geodesic segment from $q$ to $\gamma_{p, v}(t_0)$.

\medskip Indeed, by continuity of the angle, for any given angle $\alpha \in (0, \pi)$ there exists $t \in \RR$ such that
the angle with $\gamma_{q,v}$ of the geodesic from $q$ to $\gamma_{p,v}(t)$ equals $\alpha$.
We conclude
that for any unit vector $u \in S_q M$ that is pointing into  $S$, there exists $y \in \gamma_{p,v}$ such that the geodesic from $q$
to $y$ is tangent to $u \in S_q M$.

\medskip We know from (\arabic{eq_1253}) that the geodesic from $q$ to $x$ is pointing into  $S$ at the point $q$.
We thus learn from the previous paragraph that there exists
$y \in \gamma_{p,v}$ such that the geodesic segment from $q$ to $x$ coincides near $q$ with the geodesic segment from $q$ to $y$. By writing
$y  = \gamma_{p, v}(t_0)$, the claim follows from (\arabic{eq_1253}).

\begin{figure}
\begin{center} \includegraphics[width=4.8in]{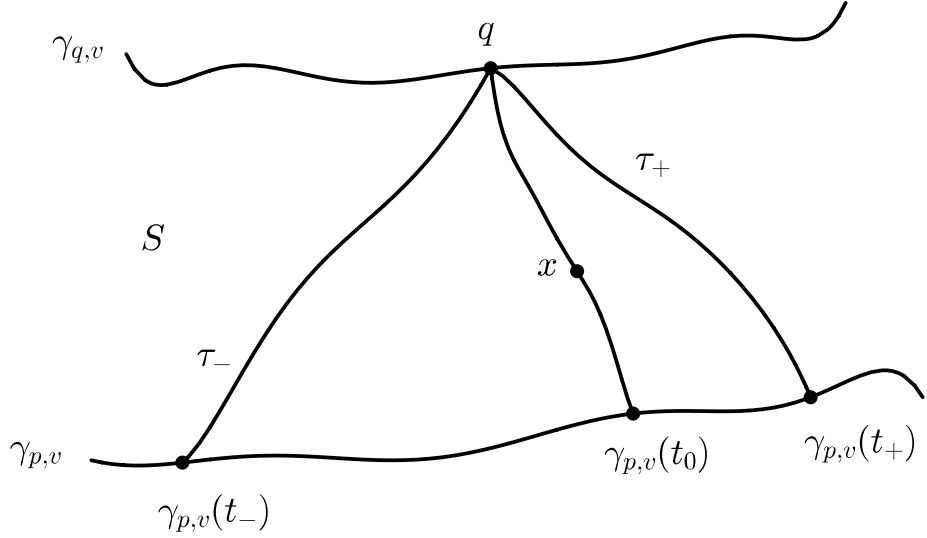} \end{center}
\caption{The geometric construction from the proof of Proposition \ref{prop_313}}
\end{figure}

\medskip Since $B_v, B^v \in \partial_v M$ we know that $B^v(q) = B_v(q)$ and hence $f_v(q) = 0 < \eps$.
Lemma \ref{lem_101} states that there exists $ t_{+} > t_0$ and a geodesic segment $\tau_+$ that connects the point $q$ with the point $\gamma_{p, v}(t_{+})$, such that
\begin{equation}
f_v(y) < \eps \qquad \qquad \qquad \text{for all} \ y \in \tau_+.
\label{eq_642} \end{equation}
Similarly, by Lemma \ref{lem_101} there exists $t_{-} < t_0$ and a geodesic segment $\tau_-$ that connects $q$ and $\gamma_{p,v}(t_-) = \gamma_{p, -v}(-t_{-})$  such that
\begin{equation}
f_v(y) = f_{-v}(y)< \eps \qquad \qquad \qquad \text{for all} \ y \in \tau_-.
\label{eq_1043} \end{equation}
To avoid ambiguity, we stipulate that the geodesic segment $\tau_{\pm}$ contains its endpoints $q$ and $\gamma_{p,v}(t_{\pm})$.
We use the Jordan-Sch\"onflies curve theorem and form a geodesic triangle $T \subset M$, a bounded open
set whose boundary $\partial T$ consists of the three edges  $\tau_-, \tau_+$ and $\gamma_{p,v}([t_-, t_+])$.
The point $q$ is a vertex of this triangle.
It follows from Corollary \ref{cor_131} that any interior point of the geodesic segment from $q$ to a point in
the edge $\gamma_{p,v}((t_-, t_+))$,
belongs to $T$.

\medskip Since $t_- < t_0 < t_+$,
the point $\gamma_{p,v}(t_0)$ is located between the points $\gamma_{p,v}(t_-)$ and $\gamma_{p,v}(t_+)$
along the curve $\gamma_{p,v}$.
Hence $\gamma_{p,v}(t_0)$ is an interior point of the edge of the triangle $T$ that is opposite the vertex $q$.
Since $x$ is an interior point of the geodesic from $q$ to $\gamma_{p,v}(t_0)$,
we conclude  that $x \in T$.
By (\ref{eq_642}) and (\ref{eq_1043}), we know that
\begin{equation}
f_v(y) < \eps \qquad \qquad \qquad \text{for all} \ y \in \tau_- \cup \tau_+.
\label{eq_1043_} \end{equation}
Since $B^v = -B_{-v}$ foliates, there exists a transport line $\eta$ of $B^v$ with $\eta(0) = x \in T$.
Since $\eta(t)$ tends to infinity as $t \rightarrow \infty$, there exists $t > 0$ such that
\begin{equation} \eta(t) \in \partial T. \label{eq_920_} \end{equation}
However, $\gamma_{p,v}$ is a transport line of $B^v \in \partial_v M$, by Proposition \ref{prop_1155}(iii), and hence
$\eta$ and $\gamma_{p,v}$ are disjoint transport lines of $B^v$, by Lemma \ref{lem_1005}.  We thus conclude from (\ref{eq_920_}) that
\begin{equation}
\eta(t) \in \partial T \setminus \gamma_{p,v} \subseteq \tau_- \cup \tau_+. \label{eq_920__} \end{equation}
Since $B_v$ is $1$-Lipschitz, by (\ref{eq_1043_}) and (\ref{eq_920__}),
$$ f_v(x) = B^v(\eta(0)) - B_v(\eta(0)) = B^v(\eta(t)) - t - B_v(\eta(0)) \leq B^v(\eta(t)) - B_v(\eta(t)) = f_v(\eta(t)) < \eps. $$
This completes the proof
that $B^v \equiv B_v$ in $M$. Since $B^v$ is the maximal element of $\partial_v M$ while $B_v$ is the minimal element,
$\partial_v M$ is a singleton.
\end{proof}

\section{No conjugate points}
\label{sec_no_conj}

Proposition \ref{prop_313} will be used in order to show that $M$ has no conjugate points.
First we need:

\begin{lemma} For any $x \in M$, the map  $S^1 \ni v \mapsto \nabla B_v(x) \in S_x M$ is continuous and onto.
 \label{lem_155}
\end{lemma}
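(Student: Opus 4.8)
The plan is to treat continuity and surjectivity separately, exploiting the two facts that have just been established: that $\partial_v M = \{ B_v \}$ is a singleton (Proposition \ref{prop_313}) and that each $B_v$ foliates (Proposition \ref{lem_357}), so that $\nabla B_v$ is a well-defined continuous-candidate vector field.

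For continuity, I would first show that for every fixed $y \in M$ the function $S^1 \ni v \mapsto B_v(y)$ is continuous. Take $v_m \to v$ in $S^1$. The functions $B_{v_m}$ are $1$-Lipschitz and vanish at $0 \in L \subseteq M$, since $B_{v_m}(0) = \langle 0, v_m \rangle = 0$; hence by Arzela--Ascoli the sequence is precompact in the topology of local uniform convergence. Any subsequential limit $B$ is $1$-Lipschitz and satisfies $B(p) = \lim_m \langle p, v_m \rangle = \langle p, v \rangle$ for every $p \in L$, so $B \in \partial_v M$, and Proposition \ref{prop_313} forces $B = B_v$. Since every subsequential limit equals $B_v$, the whole sequence converges to $B_v$ locally uniformly, and in particular $B_{v_m}(y) \to B_v(y)$. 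With continuity of $v \mapsto B_v(y)$ in hand, I apply Lemma \ref{lem_200} with $V = S^1$ and $f_v = B_v$ (legitimate, as each $B_v$ foliates), which yields that $(y,v) \mapsto \nabla B_v(y)$ is continuous on $M \times S^1$; restricting to the fixed point $y = x$ gives the desired continuity of $v \mapsto \nabla B_v(x)$.

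For surjectivity, I would note that since $\partial_v M$ is a singleton its maximal and minimal elements agree, so $B^v \equiv B_v$; combining this with (\ref{eq_422}) gives $B_{-v} = -B_v$ for every $v$, hence $\nabla B_{-v}(x) = -\nabla B_v(x)$, so that $F(v) := \nabla B_v(x)$ is an odd continuous map from $S^1$ to $S_x M$. Then I reuse the degree argument from the proof of Proposition \ref{prop_1155}(ii): the unit tangent circle $S_x M$ is diffeomorphic to $S^1$, a continuous odd self-map of the circle has odd (hence non-zero) Brouwer degree, and a map of non-zero degree between circles is onto.

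I do not expect a serious obstacle here; the substantive work was already done in Proposition \ref{prop_313} and Lemma \ref{lem_200}. The one place to be slightly careful is the continuity step: rather than trying to estimate the Busemann-type limits defining $B_v$ directly as $v$ varies, one should route the argument through the uniqueness of the element of $\partial_v M$, and one should remember the normalization $B_{v_m}(0) = 0$ so that Arzela--Ascoli applies without further bookkeeping.
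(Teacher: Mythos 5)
Your proof is correct and follows essentially the same route as the paper: continuity of $v \mapsto B_v(x)$ via Arzel\`a--Ascoli plus the uniqueness of the element of $\partial_v M$ from Proposition \ref{prop_313}, then Lemma \ref{lem_200} for continuity of the gradient, and oddness (via $B_{-v} = -B_v$) plus a Brouwer degree argument for surjectivity. The only cosmetic difference is that you deduce $B_{-v} = -B_v$ through $B^v \equiv B_v$ and (\ref{eq_422}), whereas the paper deduces it directly from (\ref{eq_639}) and Proposition \ref{prop_313}; this is the same fact.
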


\begin{proof} First we prove that the map $v \mapsto B_v(x)$ is continuous in $v \in S^1$, for any fixed $x \in M$.
Recall that the $1$-Lipschitz function $B_v$ vanishes at $0$ for any $v \in S^1$.
Let $v_m \longrightarrow v$ be a sequence in $S^1$. By the Arzela-Ascoli theorem, we may pass to a subsequence and assume that $(B_{v_m} )_{m \geq 1}$ converges locally-uniformly to some $1$-Lipschitz function $B$, and our goal is to prove that $B \equiv B_v$.
In view of Proposition \ref{prop_313}, it suffices
to prove that $B \in \partial_v M$. For any $p\in L$, we have
$$
B(p)=\lim_{m \rightarrow \infty} B_{v_m}(p)=\lim_{m \rightarrow \infty}\langle p, v_m\rangle=\langle p, v\rangle,
$$
and hence $B \in \partial_v M$. We have thus proved  that the map
$  v \mapsto B_v(x)$
is continuous in $v \in S^1$, for any fixed $x \in M$.
Recall from Proposition \ref{lem_357} that $B_v$ foliates for any $v \in S^1$.
Lemma \ref{lem_200} now implies the continuity of the map
$$ S^1 \ni v \mapsto \nabla B_v(x) \in S_x M. $$
From (\ref{eq_639}) we know that $-B_v \in \partial_{-v} M$,
hence $-B_v = B_{-v}$ by Proposition \ref{prop_313}. Therefore $v \mapsto \nabla B_v(x)$
is a continuous, odd,  map from $S^1$ to $S_x M$, hence its Brouwer degree is odd
 and the map is onto.
\end{proof}

\begin{corollary} All geodesics in $M$ are minimizing, so there are no conjugate points in $M$.
\label{prop_159}
\end{corollary}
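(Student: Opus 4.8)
The plan is to use Lemma \ref{lem_155} to show that every geodesic through every point of $M$ is minimizing, not just geodesics through $L$-points as established in Corollary \ref{cor_131}. First I would fix an arbitrary point $x \in M$ and an arbitrary unit vector $u \in S_x M$, and let $\sigma: [0, \infty) \to M$ be the geodesic ray with $\sigma(0) = x$ and $\dot{\sigma}(0) = u$. By Lemma \ref{lem_155} the map $v \mapsto \nabla B_v(x)$ from $S^1$ onto $S_x M$ is continuous and onto, so there exists $v \in S^1$ with $\nabla B_v(x) = u$. Since $B_v$ foliates by Proposition \ref{lem_357}, there is a transport line $\eta: \RR \to M$ of $B_v$ with $\eta(0) = x$, and by (\ref{eq_825}) we have $\dot{\eta}(0) = \nabla B_v(x) = u$. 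Two geodesics agreeing in position and velocity at $t = 0$ coincide, so $\sigma(t) = \eta(t)$ for $t \geq 0$. But $\eta$ is a transport line, hence a minimizing geodesic; therefore $\sigma$ is a minimizing geodesic ray. Since $x$ and $u$ were arbitrary, every geodesic ray in $M$ is minimizing, and consequently every complete geodesic in $M$ is a minimizing geodesic line.

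From here the statement about conjugate points is immediate: if $M$ had a pair of conjugate points along some geodesic, then that geodesic would fail to be minimizing past the first conjugate point, which is a standard fact (Jacobi's theorem, see \cite{CE}). Since we have just shown all geodesics are minimizing, no conjugate points can occur. Equivalently, one can note that the exponential map $\exp_x: T_x M \to M$ has no critical points for any $x$, because a critical point of $\exp_x$ would produce a non-minimizing geodesic segment through $x$.

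I do not expect any serious obstacle here; the work has all been done in the preceding sections. The one point requiring a small amount of care is the passage from ``every geodesic ray from every point is minimizing'' to ``every complete geodesic is minimizing'': given a complete geodesic $\gamma: \RR \to M$ and $s < t$, the restriction $\gamma|_{[s,\infty)}$ is a minimizing ray emanating from $\gamma(s)$, so $d(\gamma(s), \gamma(t)) = t - s$, which is exactly the minimizing property. Thus the corollary follows directly by combining Lemma \ref{lem_155} with Proposition \ref{lem_357} and the elementary uniqueness of geodesics with prescribed initial data.
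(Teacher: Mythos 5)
Your proof is correct and follows the same route as the paper: use the surjectivity of $v \mapsto \nabla B_v(x)$ from Lemma \ref{lem_155} together with the foliation property from Proposition \ref{lem_357} to realize any geodesic through $x$ as a transport line of some $B_v$, hence minimizing, and then invoke the standard fact that minimizing geodesics contain no conjugate points. The extra care you take in matching initial data to conclude $\sigma = \eta$ and in passing from rays to complete geodesics is just spelling out details the paper leaves implicit.
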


\begin{proof} Let $x \in M$ and let $\gamma$ be a complete geodesic with $\gamma(0) = x$.
By Lemma \ref{lem_155} there exists $v \in S^1$ such that $\dot{\gamma}(0) = \nabla B_v(x)$.
By Proposition \ref{lem_357}, the geodesic
$\gamma$ is a transport line of $B_v$, and hence it is a minimizing geodesic.
Since a complete, minimizing geodesic cannot contain a pair of conjugate points, there
are no conjugate points in $M$. \end{proof}

Given $x \in \RR^2$ and $r > 0$, we write $x = a v$ for $a \geq 0$ and $v \in S^1$ and define  $$ T_r(x) = \gamma_{0,v}(a r). $$
Then $T_r: \RR^2 \rightarrow M$ is a homeomorphism, by Corollary \ref{cor_131} and Lemma \ref{lem_1008}.
For $x, y \in \RR^2$ and for $r > 0$  denote
$$ d_r(x,y) = \frac{d(T_r(x), T_r(y))}{r}. $$
Recall that we already discussed the large-scale geometry of $M$. In fact, Proposition \ref{lem_445} directly implies the following:

\begin{corollary} For any $x, y \in \RR^2$,
$$ \lim_{r \rightarrow \infty} d_r(x,y) = |x - y| $$
and the convergence is locally uniform in $x,y \in \RR^2$.
\label{cor_122}
\end{corollary}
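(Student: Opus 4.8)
The plan is to deduce this immediately from Proposition \ref{lem_445}, applied with the base point $p = 0$. This is legitimate because we normalized the discrete set so that $0 \in L$, and consequently the complete minimizing geodesic $\gamma_{0,v} \colon \RR \to M$ is defined for every $v \in S^1$ by Proposition \ref{prop_1155}; thus $0$ is an admissible choice for the point $p$ in the statement of Proposition \ref{lem_445}.

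The remaining work is purely a matter of matching notation. Given $x, y \in \RR^2$, write $x = a v$ and $y = b w$ with $v, w \in S^1$ and $a, b \geq 0$, exactly as in the definitions of $T_r$ and of Proposition \ref{lem_445}. By the definition of the homeomorphism $T_r \colon \RR^2 \to M$ we have $T_r(x) = \gamma_{0,v}(ar)$ and $T_r(y) = \gamma_{0,w}(br)$, so that
$$ d_r(x,y) = \frac{d(T_r(x), T_r(y))}{r} = \frac{d(\gamma_{0,v}(a r), \gamma_{0,w}(b r))}{r}. $$
Proposition \ref{lem_445} asserts precisely that the right-hand side converges to $|x-y|$ as $r \to \infty$, and that this convergence is locally uniform in $x, y \in \RR^2$; both the limit and the uniformity statement are therefore inherited verbatim.

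There is no genuine obstacle in this step: the content has already been established in Proposition \ref{lem_445} (whose own proof, in turn, rested on Proposition \ref{lem_net} and the approximation of geodesic rays by geodesic segments ending at points of $L$). The only things to check are the two trivial observations just made, namely that $0 \in L$ and that the definition of $T_r$ is literally the specialization $p = 0$ of the geodesics appearing in Proposition \ref{lem_445}.
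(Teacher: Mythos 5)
Your proposal is correct and coincides with the paper's own reasoning: the paper states that Proposition \ref{lem_445} directly implies Corollary \ref{cor_122}, and the specialization $p=0$ together with the definition of $T_r$ and $d_r$ is exactly the intended (and unwritten) translation.
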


Write $M_r$ for the Riemannian manifold obtained from $M$ by multiplying the metric tensor by a factor of $1/r^2$.
Then $(\RR^2, d_r)$ is a metric space isometric to $M_r$ via the map $T_r$, where $M_r$ is a complete, connected Riemannian surface in which all geodesics are minimizing.
Write $\mu_r$ for the area measure on $\RR^2$ corresponding to this isometry. That is, $r^2 \cdot \mu_r$ is the measure on $\RR^2$ obtained by pulling back the Riemannian area measure on $M$ under the homeomorphism $T_r$.

\begin{proof}[Proof of Theorem \ref{thm_1217}] Write $D \subseteq \RR^2$ for the open Euclidean unit disc centered at the origin, and observe that $T_r(D) = D_M(0,r)$, where $D_M(0,r) \subset M$ is the open geodesic ball of radius $r$ centered at $0$.  We claim that
 \begin{equation}  \lim_{r \rightarrow \infty} \mu_r( D ) = \pi. \label{eq_237_} \end{equation}
Indeed, from Corollary \ref{cor_122} we know that $d_r$ tends locally uniformly to the Euclidean metric
in $\RR^2$ as $r \rightarrow \infty$. Moreover, for any $r$, the topology induced by $d_r$ on $\RR^2$ is the standard one,
and the metric space $(\RR^2, d_r)$ is isometric to a complete, connected, $2$-dimensional Riemannian manifold in which all geodesics are minimizing.
Proposition \ref{prop_1110}, stated and proven in the appendix below, thus yields (\ref{eq_237_}).
Consequently, by the definition of $T_r, d_r$ and $\mu_r$,
\begin{equation}  \frac{\area( D_M(0, r) )}{r^2} = \mu_r(D) \xrightarrow{r \rightarrow \infty} \pi. \label{eq_239_} \end{equation}
Corollary \ref{cor_131} states that $M$ is diffeomorphic to $\RR^2$. According to Corollary \ref{prop_159} there are no conjugate points in $M$.
In view of (\ref{eq_239_}) we may apply Theorem \ref{thm_BE}, due to Bangert and Emmerich, and conclude that $M$ is flat.
The only flat surface in which all geodesics are minimizing is the Euclidean plane $\RR^2$.
\end{proof}

\begin{proof}[Proof of Corollary \ref{cor_344}] Suppose by contradiction that $X$ embeds isometrically
in a complete, $2$-dimensional, Riemannian manifold $\tilde{M}$. Since all distances in $X$ are finite, we may assume that $\tilde{M}$
is connected. Since $X$ contains a net in a two-dimensional affine plane,
$\tilde{M}$ is necessarily isometric to the Euclidean plane $\RR^2$ by Theorem \ref{thm_1217}. Hence for any four points $x_1,x_2,x_3,x_4 \in \tilde{M}$,
abbreviating $d_{ij} = d(x_i, x_j)$, the $4 \times 4$ matrix
\begin{equation}  \left( \frac{-d_{ij}^2 + d_{i4}^2 + d_{j4}^2}{2} \right)_{i,j=1,\ldots,4} \label{eq_348}
\end{equation}
is of rank at most two. Indeed, the matrix in (\ref{eq_348}) is the Gram matrix of four points in a Euclidean plane. However,
since $X \subseteq \RR^3$ is not contained in a two-dimensional affine plane, there exist four points $x_1,x_2,x_3,x_4 \in X$
whose affine span in $\RR^3$ is three-dimensional. For these four points, the matrix in (\ref{eq_348}) has rank $3$, in contradiction.
\end{proof}

\section{Appendix: Continuity of area}

Suppose that for any $m \geq 1$ we are given a metric $d_m$ on $\RR^2$, such that the following hold:
\begin{enumerate}
\item[(i)] For any $m$, the metric $d_m$ induces the standard topology on $\RR^2$.
\item[(ii)] For any $x,y \in \RR^2$ we have $d_m(x,y) \longrightarrow |x-y|$ as $m \to \infty$, and the convergence is locally uniform.
\item[(iii)] For any $m$, the metric space $(\RR^2, d_m)$ is isometric to a complete, connected, $2$-dimensional Riemannian manifold in which all geodesics are minimizing.
\end{enumerate}

Write $\area_m$ for the Riemannian area measure on $\RR^2$ that corresponds to $d_m$ under the above isometry.

\begin{proposition} For $D = D(0,1) =\{ x \in \RR^2 \, ; \, |x| < 1 \}$ we have $\area_m(D) \longrightarrow \pi$ as $m \rightarrow \infty$.
\label{prop_1110}
\end{proposition}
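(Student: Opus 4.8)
The plan is to reduce the statement to a fact about Riemannian geodesic balls and then to analyse those balls in geodesic polar coordinates, which are globally defined on each surface precisely because all of its geodesics are minimizing. For $p \in \RR^2$ and $\rho > 0$ write $D_m(p,\rho) = \{ x \in \RR^2 \, ; \, d_m(p,x) < \rho \}$. First I would show, using (i), (ii) and the fact that $(\RR^2, d_m)$ is a length space, that for every $\eps \in (0,1)$ and all large $m$ one has $D_m(0, 1-\eps) \subseteq D \subseteq D_m(0, 1+\eps)$. The inclusion $D \subseteq D_m(0,1+\eps)$ is immediate from the uniform convergence $d_m(0,\cdot) \to |\cdot|$ on $\overline D$. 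For the other inclusion one first checks that $d_m$-balls of bounded radius do not escape to large Euclidean radius: since $(\RR^2,d_m)$ is a length space and $d_m(0,\cdot) \to |\cdot|$ uniformly on $\{ |x| = 4 \}$, every $d_m$-path from $0$ leaving $\{ |x| < 4 \}$ has $d_m$-length $> 2$ for large $m$, so $D_m(0,2) \subseteq \{ |x| < 4 \}$, and then (ii) on $\{ |x| \le 4 \}$ gives the claim. Consequently it suffices to prove that $\area_m(D_m(0,s)) \to \pi s^2$ for every fixed $s > 0$; indeed this gives $\pi(1-\eps)^2 \le \liminf_m \area_m(D) \le \limsup_m \area_m(D) \le \pi(1+\eps)^2$ for all $\eps > 0$.

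Fix $s > 0$. By (iii), $(\RR^2, d_m)$ is isometric to a complete Riemannian surface $M_m$ without conjugate points, diffeomorphic to $\RR^2$, in which every geodesic is minimizing; let $x_m \in M_m$ correspond to $0$. Since $M_m$ has no conjugate points and no cut points, $\exp_{x_m}$ is a diffeomorphism, and in geodesic polar coordinates about $x_m$ the metric reads $dr^2 + f_m(r,\theta)^2\, d\theta^2$ with $f_m(0,\theta)=0$, $\partial_r f_m(0,\theta)=1$ and $f_m(r,\theta) > 0$ for $r > 0$. The ball $D_m(0,s)$ corresponds to $\{ r < s \}$, so $\area_m(D_m(0,s)) = \int_0^s L_m(r)\, dr$ where $L_m(r) := \int_0^{2\pi} f_m(r,\theta)\, d\theta$ is the length of the geodesic circle $\partial D_m(0,r)$; from the Jacobi equation $\partial_r^2 f_m + K_m f_m = 0$ and Gauss--Bonnet applied to the topological disc $D_m(0,r)$ one gets the identity $L_m'(r) = 2\pi - \int_{D_m(0,r)} K_m\, dA_m$. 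Thus it is enough to understand the asymptotics of $L_m(r)$.

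For the lower bound: for large $m$ the geodesic circle $\partial D_m(0,r)$ is, by the first step, a closed curve lying in the Euclidean annulus $\{ r-\eps \le |x| \le r+\eps \}$ and winding once around the Euclidean disc $D(0,r-\eps)$, so it meets every ray from the origin; picking one intersection point in each of $N$ equally spaced directions produces an inscribed $N$-gon of Euclidean perimeter at least $2N(r-\eps)\sin(\pi/N)$, and comparing the $d_m$-length of the circle with the $d_m$-length of this fixed $N$-gon up to an error $N\eta_m$, where $\eta_m = \sup_{|x|\le s+1}|d_m(0,x)-|x|| \to 0$, gives $\liminf_m L_m(r) \ge 2N(r-\eps)\sin(\pi/N)$ for every $N$; letting $N \to \infty$ and then $\eps \to 0$ yields $\liminf_m L_m(r) \ge 2\pi r$ for each $r \in (0,s]$. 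By Fatou's lemma, $\liminf_m \area_m(D_m(0,s)) = \liminf_m \int_0^s L_m(r)\, dr \ge \int_0^s 2\pi r\, dr = \pi s^2$.

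The matching upper bound $\limsup_m \area_m(D_m(0,s)) \le \pi s^2$ is the heart of the argument, and this is where the minimality of all geodesics of $M_m$ is used essentially: otherwise one could graft an ``area bubble'' into $D$ that perturbs the distances $d_m$ negligibly yet inflates $\area_m(D)$. I would argue by contradiction. From the identity above, $\area_m(D_m(0,s)) - \pi s^2 = -\int_0^s\!\int_0^r \big( \int_{D_m(0,\rho)} K_m\, dA_m \big)\, d\rho\, dr$, so an excess of $\area_m(D_m(0,s))$ over $\pi s^2$ persisting along a subsequence forces the total curvature $\int_{D_m(0,\rho)} K_m\, dA_m$ to be strongly negative for $\rho$ in a definite range. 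Such a Gauss--Bonnet deficit, concentrated on the bounded set $D_m(0,r)$, is a conical distortion of the metric that must be visible in distances: in the identification with $\RR^2$ it produces either points at mutual $d_m$-distance bounded below whose Euclidean separation tends to $0$, or points of bounded Euclidean separation whose $d_m$-distance blows up, either way contradicting the uniform convergence in (ii). Turning ``strongly negative total curvature in a fixed geodesic ball of a manifold without conjugate points'' into a quantitative violation of (ii) is the main technical obstacle; once it is in place, the upper bound combines with the Fatou lower bound to give $\area_m(D_m(0,s)) \to \pi s^2$, and hence $\area_m(D) \to \pi$.
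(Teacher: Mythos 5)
Your proposal takes a genuinely different route from the paper's: you work in geodesic polar coordinates and invoke Gauss--Bonnet, whereas the paper uses the Santal\'o and Crofton formulas, passing to a weak limit of the push-forward of the Liouville measure on geodesics to pairs of points on a fixed surrounding polygon $\partial K \times \partial K$, and then reading off the area from the limit measure via Lemma~\ref{lem_954}. Your preliminary reduction to proving $\area_m(D_m(0,s)) \to \pi s^2$ for each fixed $s$, and your lower bound $\liminf_m \area_m(D_m(0,s)) \ge \pi s^2$ via inscribed $N$-gons on the geodesic circles plus Fatou, are both correct and pleasantly elementary.

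However, the upper bound $\limsup_m \area_m(D_m(0,s)) \le \pi s^2$ is exactly where your argument stops being a proof, and this is not a minor technical detail but the essential content of the proposition. You write that ``turning strongly negative total curvature in a fixed geodesic ball into a quantitative violation of (ii) is the main technical obstacle''; but this obstacle is, in effect, the proposition itself, rephrased through Gauss--Bonnet. It is not at all clear that negative total curvature concentrated in $D$ must be ``visible in distances'' in the way you describe: the relation between interior curvature and pairwise distances is global and nonlocal, and the whole filling-area circle of ideas exists because different interior metrics can be nearly indistinguishable from boundary distance data. Making your heuristic quantitative does not look any easier than the integral-geometric route the paper takes, in which the hypothesis (\emph{ii}) is converted \emph{directly} into convergence of the push-forward Liouville measures $\mu_m$, and the Santal\'o formula then yields \emph{both} the upper and lower bounds on area at once. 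As it stands, then, the proposal proves the easier half of the two-sided estimate by a different and valid method, but leaves a genuine gap on the harder half, and the sketched strategy for closing it is not clearly sound.
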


We were unable to find a proof of Proposition \ref{prop_1110} in the literature,
even though Ivanov's paper \cite{Iv} contains a deeper result which ``almost'' implies this proposition.
A proof of Proposition \ref{prop_1110} is thus provided in this Appendix.

\medskip
By a $d_m$-geodesic in $\RR^2$ we mean a geodesic with respect to the metric $d_m$. The $d_m$-length
of a $d_m$-rectifiable curve $\gamma$ is denoted by $\length_m(\gamma)$.
A set $K \subseteq \RR^2$ is $d_m$-convex if the intersection of any $d_m$-geodesic with $K$ is connected.
All $d_m$-geodesics are minimizing,
and each complete $d_m$-geodesic divides $\RR^2$ into two connected components.
Each of these connected components is a $d_m$-convex, open set called a $d_m$-half-plane.
The intersection of finitely many $d_m$-half-planes, if bounded and non-empty, is called a $d_m$-polygon. Note that our polygons are always open and convex.
The boundary of any $d_m$-polygon consists of finitely many vertices and the same number of edges, and each edge is a $d_m$-geodesic segment.

\medskip
Write $\cG_m$ for the collection
of all complete $d_m$-geodesics in $\RR^2$,
where we identify between two geodesics if they differ by an orientation-preserving reparametrization.
Write $\sigma_m$ for the  Liouville (or \'etendue) measure on $\cG_m$, see Kloeckner and Kuperberg \cite[Section 5.2]{KK} and \'Alvarez-Paiva
and Berck \cite[Section 5]{AB}) and references therein for the basic properties of this measure, and for the formulae of Santal\'o and Crofton
from integral geometry.
 The Santal\'o formula implies that for any open set $U \subseteq \RR^2$,
$$  \area_m( U ) = \frac{1}{2 \pi} \int_{\cG_m} \length_m(\gamma \cap U ) d \sigma_m(\gamma). $$
(We remark that $\gamma \cap U$ can be disconnected, yet it is a disjoint, countable union of $d_m$-geodesics, and $\length_m(\gamma \cap U)$
is the sum of the $d_m$-lengths of these $d_m$-geodesics).
The Crofton formula implies that
 for any $d_m$-polygon $P \subseteq \RR^2$,
$$  \perimeter_m( P ) = \frac{1}{2} \cdot
\sigma_m \left(  \left \{ \gamma \in \cG_m \, ; \, \gamma \cap P \neq \emptyset \right \} \right), $$
where $\perimeter_m(P) = \length_m(\partial P)$.
When we discuss $\area, \length, \perimeter$ or polygons without the subscript $m$ we refer to the usual Euclidean geometry in $\RR^2$.
Write $\cG$ for the collection of all lines in $\RR^2$, where we identify between two lines if they differ by an orientation-preserving reparametrization. Write $\sigma$ for the Euclidean Liouville  measure on $\cG$. We require the following
Euclidean lemma:

\begin{lemma} Let $\tilde{\sigma}$ be a Borel measure on $\cG$ such that for any convex polygon $P \subseteq D(0,2) \subseteq \RR^2$,
\begin{equation}  \perimeter( P ) = \frac{1}{2} \cdot
\tilde{\sigma} \left(  \left \{ \ell \in \cG \, ; \, \ell \cap P \neq \emptyset \right \} \right). \label{eq_126_} \end{equation}
Then,
\begin{equation}  \frac{1}{2 \pi} \int_{\cG} \length(\ell \cap D ) d \tilde{\sigma}(\ell) = \area(D) = \pi. \label{eq_138_} \end{equation}
\label{lem_954}
\end{lemma}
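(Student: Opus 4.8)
The plan is to write the chord-length function $\ell \mapsto \length(\ell \cap D)$ as a superposition of the indicator functions of the sets $A_\rho := \{\ell \in \cG \,;\, \ell \cap D(0,\rho) \neq \emptyset\}$ (for $0<\rho<1$), to evaluate $\tilde\sigma(A_\rho)$ from the Crofton hypothesis (\ref{eq_126_}) by polygonal approximation of the disc, and then to finish with an elementary integral.

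First I would record two elementary Euclidean facts. If $\ell$ is a line at distance $p \geq 0$ from the origin, then $\length(\ell \cap D) = 2\sqrt{1-p^2}$ for $p < 1$ and $\length(\ell\cap D) = 0$ for $p \geq 1$; moreover $\ell \in A_\rho$ precisely when $p < \rho$. Hence, for every line $\ell \in \cG$, a layer-cake computation (equivalently, the substitution $s = 2\sqrt{1-\rho^2}$ in the identity $\length(\ell\cap D) = \int_0^2 \mathbf{1}[\length(\ell\cap D) > s]\,ds$) gives
$$ \length(\ell \cap D) \;=\; \int_0^1 \mathbf{1}_{A_\rho}(\ell) \cdot \frac{2\rho}{\sqrt{1-\rho^2}}\, d\rho. $$
The map $(\ell,\rho) \mapsto \mathbf{1}_{A_\rho}(\ell)$ is jointly Borel measurable (the function $\ell \mapsto \mathrm{dist}(\ell,0)$ is continuous on $\cG$), the integrand is non-negative, and $\rho \mapsto 2\rho/\sqrt{1-\rho^2}$ is integrable on $(0,1)$; so Tonelli's theorem applies and, integrating against $\tilde\sigma$,
$$ \int_{\cG} \length(\ell \cap D)\, d\tilde\sigma(\ell) \;=\; \int_0^1 \frac{2\rho}{\sqrt{1-\rho^2}}\, \tilde\sigma(A_\rho)\, d\rho. $$

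Next I would compute $\tilde\sigma(A_\rho)$ for $0 < \rho < 1$ by sandwiching the open disc $D(0,\rho)$ between convex polygons. For an inscribed regular polygon $P_n$, whose open interior lies inside $D(0,\rho)$, we have $\{\ell \,;\, \ell \cap P_n \neq \emptyset\} \subseteq A_\rho$, so (\ref{eq_126_}) gives $\tilde\sigma(A_\rho) \geq 2\,\perimeter(P_n)$; for a circumscribed regular polygon $Q_n$ with $D(0,\rho) \subseteq Q_n \subseteq D(0,2)$ we have $A_\rho \subseteq \{\ell\,;\,\ell\cap Q_n\neq\emptyset\}$, so $\tilde\sigma(A_\rho) \leq 2\,\perimeter(Q_n)$. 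Letting $n \to \infty$ and using $\perimeter(P_n),\perimeter(Q_n) \to 2\pi\rho$, we obtain $\tilde\sigma(A_\rho) = 4\pi\rho$. Substituting this into the previous display,
$$ \frac{1}{2\pi}\int_{\cG} \length(\ell \cap D)\, d\tilde\sigma(\ell) \;=\; \frac{1}{2\pi}\int_0^1 \frac{2\rho}{\sqrt{1-\rho^2}}\cdot 4\pi\rho\, d\rho \;=\; 4\int_0^1 \frac{\rho^2}{\sqrt{1-\rho^2}}\, d\rho \;=\; 4 \cdot \frac{\pi}{4} \;=\; \pi, $$
which, together with the trivial $\area(D) = \pi$, is precisely (\ref{eq_138_}).

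I do not anticipate a real obstacle: the argument is short and the only delicate points are bookkeeping ones — keeping track of open versus closed sets when passing between $D(0,\rho)$ and its approximating polygons (and noting that tangent lines, which form a $\tilde\sigma$-null set, cause no trouble, since in fact the layer-cake identity holds pointwise for every line), and verifying the hypotheses of Tonelli's theorem to legitimize the interchange of the integrations over $\cG$ and over $\rho$.
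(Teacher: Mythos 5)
Your proof is correct, but it takes a genuinely different (and somewhat more elementary) route than the paper. The paper first replaces $\tilde\sigma$ by its average over $SO(2)$ — noting that both the Crofton hypothesis and the Santal\'o-type integral in~(\ref{eq_138_}) are unchanged by this averaging — then uses the fact that a rotationally-invariant measure on $\cG$ is determined by the push-forward under $\ell\mapsto r(\ell)$ to conclude that the averaged measure coincides with the Euclidean $\sigma$ on $r^{-1}([0,2))$, and finally quotes the Santal\'o formula for $\sigma$. You instead observe that the integrand $\length(\ell\cap D)=2\sqrt{1-r(\ell)^2}_+$ already depends only on $r(\ell)$, decompose it as a layer-cake over the sets $A_\rho=\{r(\ell)<\rho\}$, evaluate $\tilde\sigma(A_\rho)=4\pi\rho$ by sandwiching $D(0,\rho)$ between inscribed and circumscribed polygons inside $D(0,2)$ (which is the same step as the paper's passage from polygons to discs), and integrate out $\rho$ explicitly. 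The two routes use the same essential geometric input — the Crofton hypothesis applied to discs $D(0,\rho)$, $\rho<2$ — but yours bypasses both the rotational-averaging reduction and the invocation of Santal\'o, at the modest cost of a small explicit integral; the paper's is more systematic in that it upgrades the hypothesis to an actual identification of measures before integrating. One minor point you handled well and should keep: since the lemma does not assume $\tilde\sigma$ is $\sigma$-finite, the Tonelli step needs the observation (which your polygonal bound already furnishes) that the integrand is supported on $A_1$, a set of finite $\tilde\sigma$-measure.
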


\begin{proof} For any rotation $U \in SO(2)$, the perimeter of the rotated polygon
$U(P) \subset \RR^2$ is the same as the perimeter of $P$. Hence formula (\ref{eq_126_}) holds true with $\tilde{\sigma}$ replaced by $U_* \tilde{\sigma}$,
where by $U_* \tilde{\sigma}$ we mean the push-forward of $\tilde{\sigma}$ under the map $U$ acting on $\cG$ by rotating lines.
Moreover, since $U(D) = D$, replacing $\tilde{\sigma}$ by $U_* \tilde{\sigma}$ does not change the value of the integral on the left-hand side of (\ref{eq_138_}).

\medskip We may thus replace the measure $\tilde{\sigma}$ by the average of $U_* \tilde{\sigma}$ over $U \in SO(2)$, and assume from now on
that $\tilde{\sigma}$ is a rotationally-invariant measure on $\cG$. The validity of (\ref{eq_126_}) for any convex polygon $P \subset D(0,2)$ implies its validity
for all convex sets in the disc $D(0,2)$. Indeed, both the left-hand side and the right-hand side of (\ref{eq_126_}) are monotone in the convex set $P$ under inclusion, and convex polygons are dense in the class of all convex subsets of $D(0,2)$. Consequently, for any $0 < \rho < 2$,
\begin{equation}
2 \pi \rho = \perimeter (D(0,\rho)) = \frac{1}{2} \cdot
\tilde{\sigma} \left(  \left \{ \ell \in \cG \, ; \, \ell \cap D(0,\rho) \neq \emptyset \right \} \right). \label{eq_947_}
\end{equation}
For $\ell \in \cG$ write $r(\ell) = \inf_{x \in \ell} |x| \in [0, \infty)$.
We may reformulate (\ref{eq_947_}) as follows:
\begin{equation}
\tilde{\sigma} \left(  \left \{ \ell \in \cG \, ; \, r(\ell) < \rho \right \} \right) =
\sigma \left(  \left \{ \ell \in \cG \, ; \, r(\ell) < \rho \right \} \right)
\qquad \text {for any } 0 < \rho < 2.
\label{eq_1926}
\end{equation}
Since both $\sigma$ and $\tilde{\sigma}$ are rotationally-invariant measures on $\cG$, they are completely determined by their push-forward under the map $\ell \mapsto r(\ell)$.
From (\ref{eq_1926}) we learn that $\tilde{\sigma}$ coincides with $\sigma$
on the set $\cG \cap r^{-1}([0,2))$.
By the Satanl\'o formula for $\sigma$,
\begin{equation*} \frac{1}{2\pi} \int_{\cG} \length(\ell \cap D) d \tilde{\sigma}(\ell) =\frac{1}{2\pi} \int_{\cG} \length(\ell \cap D) d \sigma(\ell) =
\area(D) = \pi. \end{equation*} \end{proof}

When we refer to the Hausdorff metric below, we always mean the Euclidean Hausdorff metric  (the Hausdorff metric is defined, e.g., in \cite{bbi}).
Write $(x,y) \subseteq \RR^2$ for the Euclidean interval between $x,y \in \RR^2$
excluding the endpoints, and $[x,y] = (x,y) \cup \{ x,y \}$.
We similarly write $[x,y]_m$ and $(x,y)_m$ for the $d_m$-geodesic between $x$ and $y$, with and without the endpoints.
We claim that for any $x,y \in \RR^2$,
\begin{equation} [x,y]_m \xrightarrow{m \to \infty} [x,y] \label{eq_1125} \end{equation}
in the Hausdorff metric. Indeed, for any $0 \leq \lambda \leq 1$, the point on $[x,y]_m$ whose $d_m$-distance
from $x$ equals $\lambda \cdot d_m(x,y)$ must converge to the point on $[x,y]$ whose Euclidean distance
from $x$ equals $\lambda \cdot |x-y|$. It follows from our assumptions that the convergence is uniform
in $\lambda$, and (\ref{eq_1125}) follows.
Moreover, it follows that the Hausdorff convergence in (\ref{eq_1125}) is locally uniform in $x,y \in \RR^2$.

\medskip Write $\overline{A}$ for the closure of a set $A$.
The Euclidean $\eps$-neighborhood of a subset $A \subseteq \RR^2$
is the collection of all $x \in \RR^2$ with $d(x,A) < \eps$ where $d(x,A) = \inf_{y \in A} |x-y|$.
Given a convex polygon $P \subseteq \RR^2$, for a sufficiently large $m$ we  define $P^{(m)} \subseteq \RR^2$ to be the $d_m$-polygon with the same vertices as $P$. We need $m$ to be sufficiently large in order to guarantee that no vertex of $P$ is in the $d_m$-convex hull of the other vertices.

\begin{lemma} Let $P_0, P_1 \subseteq D(0,3)$ be convex polygons such that $\overline{P_0} \subseteq P_1$. Then there exist $m_0 \geq 1$ and $\eps > 0$ such that
the following holds: For any $m \geq m_0$ and any $x, x',y,y' \in D(0,3)$ with $|x -x'| < \eps$ and $|y - y'| < \eps$,
$$
(x,y) \cap P_0 \neq \emptyset \qquad \Longrightarrow \qquad
 (x',y')_m \cap P_1^{(m)} \neq \emptyset,
$$
and
$$
 (x',y')_m \cap P_0^{(m)} \neq \emptyset
\qquad \Longrightarrow \qquad (x,y) \cap P_1 \neq \emptyset.
$$ \label{lem_558}
\end{lemma}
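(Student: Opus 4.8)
The plan is to deduce the lemma from the locally uniform Hausdorff convergence (\ref{eq_1125}) of $d_m$-geodesic segments to Euclidean segments, combined with the openness of $P_1$ and the strict inclusion $\overline{P_0}\subseteq P_1$. Throughout, write $N_\delta(A)$ for the open Euclidean $\delta$-neighborhood of a set $A\subseteq\RR^2$. First I would fix $\delta>0$ so small that $\{x\in\RR^2\,;\,d(x,\overline{P_0})\le 2\delta\}\subseteq P_1$; this is possible because $\overline{P_0}$ is compact and lies in the open set $P_1$. In particular the compact set $\overline{N_\delta(\overline{P_0})}$ is contained in $P_1$.

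The second ingredient is the convergence of the $d_m$-polygons to their Euclidean models. For consecutive vertices $v,w$ of $P_i$, let $\ell^{(m)}$ be the complete $d_m$-geodesic through $v$ and $w$; then $P_i^{(m)}$ is the intersection of the finitely many $d_m$-half-planes bounded by these geodesics. Parametrising $\ell^{(m)}$ by $d_m$-arclength and applying (\ref{eq_1125}) to the sub-segment of $\ell^{(m)}$ whose endpoints are pushed a fixed $d_m$-distance beyond $v$ and $w$ --- such a sub-segment is itself a $d_m$-minimizing geodesic segment, hence one of the curves $[\cdot,\cdot]_m$ to which (\ref{eq_1125}) applies, and its endpoints converge to fixed points of the Euclidean line $L$ through $v,w$ --- one finds that $\ell^{(m)}\cap D(0,4)$ converges, in the Hausdorff metric, to $L\cap D(0,4)$, and consequently the corresponding $d_m$-half-plane converges to the Euclidean one. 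Intersecting the finitely many half-planes yields, for all sufficiently large $m$: $P_0^{(m)}\subseteq N_\delta(\overline{P_0})$, and $K\subseteq P_1^{(m)}$ for every compact $K\subseteq P_1$; in particular $\overline{N_\delta(\overline{P_0})}\subseteq P_1^{(m)}$. Finally, since the Hausdorff convergence in (\ref{eq_1125}) is uniform as $x,y$ range over the compact set $\overline{D(0,3)}$, there exist $m_0\ge 1$ and $\eps>0$ such that for all $m\ge m_0$ and all $x,x',y,y'\in D(0,3)$ with $|x-x'|<\eps$ and $|y-y'|<\eps$ the Euclidean Hausdorff distance between $[x,y]$ and $[x',y']_m$ is less than $\delta$; enlarging $m_0$, we also arrange that the previous sentence's conclusions hold for all $m\ge m_0$.

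Given all this, the two implications are immediate. For the first, take $z\in(x,y)\cap P_0$. Since $z\in[x,y]$ there is $z'\in[x',y']_m$ with $|z-z'|<\delta$, so $d(z',\overline{P_0})<\delta$ and hence $z'\in\overline{N_\delta(\overline{P_0})}\subseteq P_1^{(m)}$; thus $[x',y']_m$ meets the open set $P_1^{(m)}$, and therefore so does the open geodesic $(x',y')_m$ (if $z'$ happened to be an endpoint of the geodesic, a nearby interior point of $(x',y')_m$ still lies in the open set $P_1^{(m)}$). For the second, take $w'\in(x',y')_m\cap P_0^{(m)}$. Then $w'\in P_0^{(m)}\subseteq N_\delta(\overline{P_0})$, so $d(w',\overline{P_0})<\delta$, and since $w'\in[x',y']_m$ there is $w\in[x,y]$ with $|w-w'|<\delta$; hence $d(w,\overline{P_0})<2\delta$, so $w\in P_1$, and since $P_1$ is open we get $(x,y)\cap P_1\ne\emptyset$.

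The main obstacle is the polygon-convergence step: one must promote the edge-wise control provided by (\ref{eq_1125}) --- which concerns the $d_m$-segments joining pairs of vertices --- to control of the complete boundary geodesics and of the half-planes they bound, and only then intersect finitely many of them; the uniform closeness of $d_m$ to the Euclidean metric on compacta (a consequence of (\ref{eq_1125})) is what keeps the boundary geodesics from wandering and keeps each $P_i^{(m)}$ inside a fixed disc. The remaining arguments are elementary bookkeeping with $\delta$, made uniform in the four endpoints by the \emph{locally uniform} form of (\ref{eq_1125}).
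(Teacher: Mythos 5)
Your proof is correct and follows essentially the same route as the paper's: establish that the $d_m$-polygons $P_i^{(m)}$ converge to $P_i$, so that (for a suitable $\delta>0$ and all large $m$) $P_0^{(m)}$ and $P_0$ lie well inside both $P_1$ and $P_1^{(m)}$, and then combine this with the locally uniform Hausdorff convergence from (\ref{eq_1125}) to obtain the two implications by a short $\delta$-chase. The paper compresses the polygon-convergence step into the single assertion that the $\delta$-neighborhood of $P_0^{(m)}\cup P_0$ is contained in $P_1\cap P_1^{(m)}$ for large $m$, whereas you unpack it via convergence of the bounding complete $d_m$-geodesics and the half-planes they cut out; that detour is a bit heavier than necessary (one can just observe that $\partial P_i^{(m)}$ is a union of $d_m$-segments between fixed consecutive vertices, so $\partial P_i^{(m)}\to\partial P_i$ in the Hausdorff metric by (\ref{eq_1125}), and then conclude via the Jordan curve theorem), and the sub-claim that the pushed-out endpoints of $\ell^{(m)}$ converge deserves a word of justification using the locally uniform convergence $d_m\to|\cdot|$. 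On the plus side, you are more careful than the paper at the very end in passing from the closed geodesic $[x',y']_m$ to the open one $(x',y')_m$ by using openness of $P_1^{(m)}$ (and likewise of $P_1$); the paper elides this. Overall: same idea, same structure, slightly different packaging of the polygon-convergence step.
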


\begin{proof} From the Hausdorff convergence in (\ref{eq_1125}) it follows that for a sufficiently large $m$, the closure of $P_0^{(m)} \cup P_0$ is contained in $P_1 \cap P_1^{(m)}$.
In fact, there exist $\delta > 0$ and $m_1 \geq 1$  such that for $m \geq m_1$, the Euclidean $\delta$-neighborhood
of $P_0^{(m)} \cup P_0$ is contained in $P_1 \cap P_1^{(m)}$.

\medskip Set $\eps = \delta / 2$.
Since the convergence in (\ref{eq_1125}) is uniform in $x,y \in D(0,3)$, there exists $m_0 \geq m_1$  such that for any $m \geq m_0$ and $x',y'\in D(0,3)$, the Hausdorff distance between $(x',y')$ and $(x',y')_{m}$ is at most $\eps$.
Thus for any $m \geq m_0$ and $x, x',y,y' \in D(0,3)$ with $|x -x'| < \eps$ and $|y - y'| < \eps$, the Hausdorff distance between $(x,y)$ and $(x',y')$ is at most $\eps$, and by the triangle inequality, the Hausdorff distance between $(x,y)$ and $(x',y')_{m}$ is at most $2\epsilon = \delta$.

\medskip Hence if $(x,y)$ intersects $P_0$, then $(x',y')_m$ intersects the Euclidean $\delta$-neighborhood of $P_0$, which is contained in $P_1^{(m)}$. Similarly, if $(x',y')_m$ intersects $P_0^{(m)}$,
then $(x,y)$ intersects the $\delta$-neighborhood of $P_0^{(m)}$ which is contained in $P_1$.
\end{proof}

\begin{lemma} Let $K \subseteq \RR^2$ be a bounded, open, convex set. Then there exist $d_m$-polygons $K_m^{\pm} \subseteq \RR^2$ for $m \geq 1$,
 real numbers $\eps_m \searrow 0$ and $m_0 \geq 1$, such that for any  $m \geq m_0$  the following hold:
$$ K_m^- \subseteq K \subseteq K_m^+, $$
 both boundaries $\partial K_m^{\pm}$
are $\eps_m$-close to $\partial K$ in the  Hausdorff metric, and
the $d_m$-perimeters of $K_m^{\pm}$ differ from $\perimeter(K)$ by at most $\eps_m$.
\label{lem_430}
\end{lemma}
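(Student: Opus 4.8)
The plan is to trap $K$ between Euclidean convex polygons, to replace each of those polygons by the corresponding $d_m$-polygon (the operation $P\mapsto P^{(m)}$ defined above), and then to diagonalize over the quality of the Euclidean approximation so that the resulting errors tend to zero together with $m$ rather than staying fixed. The purely Euclidean input I would record first is that for every $k\geq 1$ there exist Euclidean convex polygons $A_k$ and $B_k^{(0)}\subseteq B_k^{(1)}$ with $\overline{A_k}\subseteq K$, $\overline{K}\subseteq B_k^{(0)}$ and $\overline{B_k^{(0)}}\subseteq B_k^{(1)}$, all three of whose boundaries are within Hausdorff distance $1/(2k)$ of $\partial K$ and whose perimeters differ from $\perimeter(K)$ by at most $1/(2k)$. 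This is standard convex geometry: take $A_k$ to be a mild dilation, towards an interior point of $K$, of a polygon inscribed in $K$ with a fine vertex set, and take $B_k^{(0)}\subseteq B_k^{(1)}$ to be two successive mild dilations, away from an interior point, of a polygon circumscribed about $K$; one uses here the continuity of the perimeter of convex bodies with respect to the Hausdorff metric.

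Next, with $k$ fixed, I would set $K_m^-:=A_k^{(m)}$ and $K_m^+:=(B_k^{(1)})^{(m)}$ for all $m$ large enough that these $d_m$-polygons are defined, that is, that no vertex lies in the $d_m$-convex hull of the remaining ones (for the finitely many remaining $m$ one may define $K_m^\pm$ arbitrarily). Applying the Hausdorff convergence $[x,y]_m\to[x,y]$ of $d_m$-geodesic segments from (\ref{eq_1125}) to the finitely many edges, the boundaries $\partial K_m^-$ and $\partial K_m^+$ converge in the Hausdorff metric to $\partial A_k$ and $\partial B_k^{(1)}$ respectively, and hence so do the enclosed regions: $K_m^-\to A_k$ and $K_m^+\to B_k^{(1)}$ in the Hausdorff metric. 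Since $\overline{A_k}$ is a compact subset of the open set $K$, the first convergence forces $K_m^-\subseteq K$ for all large $m$. The outer containment $K\subseteq K_m^+$ does not follow from Hausdorff convergence alone, and this is the step I expect to be the main obstacle; I would dispose of it exactly as at the start of the proof of Lemma \ref{lem_558}, namely, a fixed Euclidean neighborhood of $B_k^{(0)}\cup(B_k^{(0)})^{(m)}$ is contained in $B_k^{(1)}\cap(B_k^{(1)})^{(m)}$ once $m$ is large, so that $\overline{K}\subseteq B_k^{(0)}\subseteq(B_k^{(1)})^{(m)}=K_m^+$. Finally, since all $d_m$-geodesics are minimizing by hypothesis~(iii) of the appendix, the $d_m$-perimeter of each of $K_m^\pm$ equals the sum of $d_m(v_i,v_{i+1})$ over consecutive vertices of the corresponding Euclidean polygon, and by hypothesis~(ii) this converges as $m\to\infty$ to the Euclidean perimeter of that polygon, which lies within $1/(2k)$ of $\perimeter(K)$. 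Combining these three facts, there is $m_1(k)$ such that for every $m\geq m_1(k)$ one has $K_m^-\subseteq K\subseteq K_m^+$, both $\partial K_m^\pm$ within Hausdorff distance $1/k$ of $\partial K$, and $|\perimeter_m(K_m^\pm)-\perimeter(K)|<1/k$.

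The last step is the diagonalization. Assuming without loss of generality that $m_1(1)<m_1(2)<\cdots$, set $m_0:=m_1(1)$, and for each $m\geq m_0$ let $k(m)=\max\{k\,;\,m_1(k)\leq m\}$ and take $K_m^\pm$ to be the $d_m$-polygons built above from the index $k(m)$. As $k(m)$ is non-decreasing in $m$ with $k(m)\to\infty$, the numbers $\eps_m:=1/k(m)$ satisfy $\eps_m\searrow 0$, and by construction $K_m^-\subseteq K\subseteq K_m^+$, the boundaries $\partial K_m^\pm$ are $\eps_m$-close to $\partial K$ in the Hausdorff metric, and the $d_m$-perimeters of $K_m^\pm$ differ from $\perimeter(K)$ by at most $\eps_m$, for all $m\geq m_0$, which is the assertion. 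Apart from the outer-containment point already flagged, the only mildly delicate issue is the passage from Hausdorff convergence of the boundary curves $\partial K_m^\pm$ to Hausdorff convergence of the regions they bound; this is routine once $m$ is large enough that these boundaries are genuine Jordan curves.
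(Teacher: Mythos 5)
Your proposal is correct and takes essentially the same route as the paper: approximate $K$ by Euclidean convex polygons from inside and outside, replace their edges by $d_m$-geodesic segments to form $d_m$-polygons, and invoke the Hausdorff convergence (\ref{eq_1125}) together with hypothesis~(ii) to control the containments, boundary approximation, and perimeter. The only differences are cosmetic — the paper reduces to a fixed $\eps$ at the outset where you diagonalize over $k$, and you spell out the outer containment $K\subseteq K_m^+$ via a two-polygon argument where the paper handles it tersely — so nothing further is needed.
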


\begin{proof} It suffices to show that for any fixed $\eps > 0$ there exist $m_0 \geq 1$ and $d_m$-polygons $K_m^\pm \subset \RR^2$, defined for any $m \geq m_0$, such that 
$$ K_m^- \subseteq K \subseteq K_m^+, $$
and both boundaries $\partial K_m^{\pm}$
are $\eps$-close to $\partial K$ in the Hausdorff metric, and the $d_m$-perimeters of $K_m^{\pm}$ differ from $\perimeter(K)$ by at most $\eps$.

\medskip
Fix $\eps > 0$. We may pick finitely many points in $\partial K$, cyclically ordered, such that when connecting
each point via a segment to its two adjacent points, the result is a convex polygon whose boundary is $(\eps/2)$-close
to $\partial K$ in the Hausdorff metric. We may also require that the perimeter of this convex polygon differs from $\perimeter(K)$ by at most $\eps/2$.

\medskip We slightly move these finitely many points inside $K$, and replace the segments between the points
by $d_m$-geodesics. For a sufficiently large $m$, this defines a $d_m$-polygon $K_m^-$.
It follows from (\ref{eq_1125}) that for a sufficiently large
$m$, the  boundary $\partial K_m^-$ is $\eps$-close to $\partial K$ in the  Hausdorff metric,
the $d_m$-perimeter of $K_m^-$ differs from $\perimeter(K)$ by at most $\eps$,
and $K_m^- \subseteq K$.

\medskip We still need to construct $K_m^+$. Approximate $K$ by a convex polygon containing the closure of $K$ in its interior,
whose boundary is
$(\eps/2)$-close to $\partial K$ in the Hausdorff metric, and whose perimeter differs from $\perimeter(K)$ by at most $\eps/2$.
Replace the edges of this polygon by $d_m$-geodesics in order to form $K_m^+$. It follows from (\ref{eq_1125}) that for
a sufficiently large $m$,
the $d_m$-convex set $K_m^+$ has the desired properties.
\end{proof}

We apply Lemma \ref{lem_430} for the unit disc $D \subseteq \RR^2$, and obtain two $d_m$-polygons $D_m^{\pm}$ with
$D_m^- \subseteq D \subseteq D_m^+$ that satisfy the conclusions of the lemma.
It follows from (\ref{eq_1125}) that for $x,y \in \RR^2$,
\begin{equation}
\length_m( (x,y)_m \cap D_m^{\pm} ) \xrightarrow{m \to \infty} \length( (x,y) \cap D ), \label{eq_1000_}
\end{equation}
and the convergence is locally uniform in $x, y \in \RR^2$.
 Let us fix a convex polygon $K \subseteq \RR^2$ such that
$\overline{D(0,2)} \subseteq K$ and $\overline{K} \subseteq D(0,3)$.
We apply Lemma \ref{lem_430} and obtain $d_m$-polygons $K_m = K_m^+$ for $m \geq 1$ that approximate $K$.
For a set $A \subseteq \RR^2$ denote
$$
\cG(A) = \{ \ell \in \cG \, ; \, \ell \cap A \neq \emptyset \}
\qquad \text{and} \qquad
\cG_{m}(A) = \{ \gamma \in \cG_{m} \, ; \, \gamma \cap A \neq \emptyset \}.
$$

\begin{definition} Define the  map $T_K :\cG(K) \to \partial K \times \partial K \subset \RR^2 \times \RR^2$ by
$$ T_K(\ell) = (a(\ell), b(\ell)) \in \partial K \times \partial K, $$
where $\ell \cap \partial K = \{ a(\ell), b(\ell) \}$ and the line $\ell$ is oriented from the point $a(\ell)$ towards the point $b(\ell)$.
 We analogously define the map $T_{m}: \cG_m(K_m) \rightarrow \partial K_m \times \partial K_m \subset \RR^2 \times \RR^2$ via
$$ T_m(\gamma) = (a(\gamma), b(\gamma)) \in \partial K_m \times \partial K_m, $$
where $\gamma \cap \partial K_m = \{ a(\gamma), b(\gamma) \}$ and the geodesic $\gamma$ is oriented from $a(\gamma)$ towards $b(\gamma)$.
\end{definition}

Denote by $\mu$  the push-forward of $\sigma|_{\cG(K)}$ under the map $T_{K}$,
and denote by $\mu_m^{}$  the push-forward of $\sigma_m|_{\cG(K_m)}$ under the map $T_{m}$.
By Lemma \ref{lem_430} and the Crofton formula,
\begin{equation}
\frac{1}{2} \cdot \mu_m^{}(\RR^2 \times \RR^2) = \perimeter_m(K_m) \xrightarrow{m \rightarrow \infty} \perimeter(K)
= \frac{1}{2} \cdot \mu(\RR^2 \times \RR^2).
\label{eq_1017} \end{equation}
For a convex polygon $P \subseteq \RR^2$ we write $\cF(P) \subseteq \partial K \times \partial K$ for the collection of all pairs of points $x \neq y \in \partial K$ with $(x,y) \cap P \neq \emptyset$.
For a $d_m$-polygon $P$ we denote by $\cF_m(P) \subseteq \partial K_m \times \partial K_m$ the collection of all pairs of points $x \neq y \in \partial K_{m}$ with $(x,y)_m \cap P \neq \emptyset$. Note that if $\overline{P} \subseteq K_m$ then by the Crofton formula,
\begin{equation} \frac{1}{2} \cdot \mu_m^{}(\cF_m(P)) =
\perimeter_m(P). \label{eq_542_} \end{equation}
For a subset $A \subseteq \RR^2 \times \RR^2$ and $\eps > 0$ we write $\cN_{\eps}(A) \subseteq \RR^2 \times \RR^2$ for the Euclidean $\eps$-neighborhood, i.e.,
the collection of all $(x,y) \in \RR^2 \times \RR^2$ for which there exists $(z,w) \in A$ with $|x - z| < \eps$ and $|y - w| < \eps$.

\begin{lemma}  Fix two convex polygons $P_0, P_1 \subseteq \RR^2$ with $  \overline{P_0} \subseteq P_1$ and $\overline{P_1} \subseteq K$.
For $i=0,1$ abbreviate $\cF_i = \cF(P_i)$.
Then there exists $\eps_0 > 0$ such that
\begin{equation}   \limsup_{m \rightarrow \infty} \mu_m^{}(\cN_{\eps_0}(\cF_0)) \leq \mu(\cF_1) = 2 \cdot \perimeter(P_1).
\label{eq_1204} \end{equation}
Furthermore, for any $0 < \eps < \eps_0$,
\begin{equation} \liminf_{m \rightarrow \infty} \mu_m^{}(\cN_{\eps}(\cF_1)) \geq \mu(\cF_0) = 2 \cdot \perimeter(P_0). \label{eq_1205} \end{equation}
\label{lem_1004}
\end{lemma}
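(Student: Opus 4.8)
The plan is to transfer the two implications of Lemma~\ref{lem_558} into statements about the measures $\mu_m$, using the $d_m$-Crofton formula (\ref{eq_542_}). Two routine preliminaries will be used throughout. First, if $Q\subseteq\RR^2$ is a convex polygon with vertices $v_1,\dots,v_k$, then for large $m$ the $d_m$-polygon $Q^{(m)}$ with the same vertex set is well-defined and, by assumption (ii), $\perimeter_m(Q^{(m)})=\sum_j d_m(v_j,v_{j+1})\to\sum_j|v_j-v_{j+1}|=\perimeter(Q)$. Second, since each $\overline{P_i}\subseteq K$ and the $P_i$ are open and convex, the map $T_K$ restricts to a bijection from the set of lines meeting $P_i$ onto $\cF_i=\cF(P_i)$ (a line $\ell$ with $\ell\cap K\ne\emptyset$ satisfies $\ell\cap\partial K=\{a(\ell),b(\ell)\}$ and $\ell\cap K=(a(\ell),b(\ell))$, which meets $P_i$ iff $\ell$ does); hence by the Euclidean Crofton formula $\mu(\cF_i)=2\cdot\perimeter(P_i)$. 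I will also use, without spelling it out, that all the sets of pairs below are Borel: the sets $\cN_\eps(\cdot)$ are open, and $\cF_m(Q)$ is relatively open in $\partial K_m\times\partial K_m$ because within a fixed metric $d_m$ the minimizing geodesic segment between two points depends continuously on the two points.

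To prove (\ref{eq_1204}), I would apply Lemma~\ref{lem_558} to the pair $P_0\subseteq P_1$ (which lie in $K\subseteq D(0,3)$ and satisfy $\overline{P_0}\subseteq P_1$), obtaining $m_0\ge1$ and $\eps_0>0$ such that for every $m\ge m_0$ and all $x,x',y,y'\in D(0,3)$ with $|x-x'|<\eps_0$, $|y-y'|<\eps_0$ one has $(x,y)\cap P_0\ne\emptyset\Rightarrow(x',y')_m\cap P_1^{(m)}\ne\emptyset$. Using the Hausdorff convergence $\partial K_m\to\partial K$ from Lemma~\ref{lem_430}, after enlarging $m_0$ I may assume $\partial K_m\subseteq D(0,3)$ and $\overline{P_1^{(m)}}\subseteq K_m$ for $m\ge m_0$. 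Then for $m\ge m_0$ every pair $(x',y')\in\cN_{\eps_0}(\cF_0)\cap(\partial K_m\times\partial K_m)$ is $\eps_0$-close to some $(x,y)\in\cF_0$, so the implication gives $(x',y')\in\cF_m(P_1^{(m)})$; since $\mu_m$ is supported on $\partial K_m\times\partial K_m$, this yields $\mu_m(\cN_{\eps_0}(\cF_0))\le\mu_m(\cF_m(P_1^{(m)}))$, and by (\ref{eq_542_}) and the first preliminary the right-hand side equals $2\perimeter_m(P_1^{(m)})\to 2\perimeter(P_1)=\mu(\cF_1)$. Taking $\limsup$ over $m$ gives (\ref{eq_1204}).

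For (\ref{eq_1205}) with $0<\eps<\eps_0$, I would instead invoke the second implication of Lemma~\ref{lem_558}: for $m\ge m_0$ and points within $\eps_0$, $(x',y')_m\cap P_0^{(m)}\ne\emptyset\Rightarrow(x,y)\cap P_1\ne\emptyset$. Using Lemma~\ref{lem_430} I would enlarge $m_0$ so that, for $m\ge m_0$, the Hausdorff distance between $\partial K_m$ and $\partial K$ is less than $\eps$, that $\partial K_m\subseteq D(0,3)$, and that $\overline{P_0^{(m)}}\subseteq K_m$. Then, given $(x',y')\in\cF_m(P_0^{(m)})$, I choose $x,y\in\partial K$ with $|x-x'|<\eps<\eps_0$ and $|y-y'|<\eps<\eps_0$; the implication gives $(x,y)\in\cF(P_1)=\cF_1$, hence $(x',y')\in\cN_\eps(\cF_1)$. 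Thus $\cF_m(P_0^{(m)})\subseteq\cN_\eps(\cF_1)$, so $\mu_m(\cN_\eps(\cF_1))\ge\mu_m(\cF_m(P_0^{(m)}))=2\perimeter_m(P_0^{(m)})\to 2\perimeter(P_0)=\mu(\cF_0)$, and taking $\liminf$ over $m$ gives (\ref{eq_1205}).

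I do not anticipate a substantial obstacle; the lemma is essentially a measure-theoretic reformulation of Lemma~\ref{lem_558}. The only delicate point is the bookkeeping of the various ``for $m$ large'' hypotheses — the uniform Hausdorff approximation of $\partial K$ by $\partial K_m$, the inclusions $\overline{P_i^{(m)}}\subseteq K_m$, and $\partial K_m\subseteq D(0,3)$ — together with the need to use, in each of the two inequalities, the correct one of the two implications of Lemma~\ref{lem_558}, with the Euclidean segment on the appropriate side. In the proof of (\ref{eq_1205}) it is precisely the Hausdorff convergence $K_m\to K$ that allows one to replace a boundary point of $K_m$ by a nearby boundary point of $K$ within the allotted error $\eps$.
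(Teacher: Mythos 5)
Your proposal is correct and follows essentially the same route as the paper: apply Lemma~\ref{lem_558} to obtain $\eps_0$ and $m_0$ with the two containments involving $\cN_{\eps_0}(\cF_0)$ and $\cF_0^{(m)}$, use the $d_m$-Crofton formula (\ref{eq_542_}) after arranging $\overline{P_i^{(m)}}\subseteq K_m$, and invoke the Hausdorff closeness of $\partial K_m$ to $\partial K$ from Lemma~\ref{lem_430} (with distance strictly less than $\eps$ for $m$ large, since $\eps_m\searrow 0$) to upgrade $\cF_0^{(m)}\subseteq\cN_\eps(\cF_1)$ and deduce (\ref{eq_1205}). The minor explicit observations you add — that $\perimeter_m(Q^{(m)})$ is a sum of $d_m$-distances converging to the Euclidean perimeter, and the Crofton identity $\mu(\cF_i)=2\,\perimeter(P_i)$ — are implicit in the paper's argument and fill it in soundly.
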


\begin{proof}
Recall that $P^{(m)} \subseteq \RR^2$ was defined to be the $d_m$-polygon with the same vertices as $P$, which is well-defined for a sufficiently large $m$. Write $\cF_i^{(m)} = \cF_m(P_i^{(m)})$.
According to  Lemma \ref{lem_558} there exist $\eps_0 > 0$ and $m_0 \geq 1$ such that for any $m \geq m_0$,
\begin{equation}
\cN_{\eps_0}(\cF_0) \cap (\partial K_m \times \partial K_m) \subseteq \cF_1^{(m)}
\qquad \text {and} \qquad
\cN_{\eps_0}(\cF_{0}^{(m)}) \cap (\partial K \times \partial K) \subseteq \cF_1.
\label{eq_2350}
\end{equation}
By increasing $m_0$ if necessary, we may assume that $\overline{P_{i}^{(m)}} \subseteq K \subseteq K_{m}$ for all $m \geq m_0$ and $i=0,1$.
Using (\ref{eq_2350}) and (\ref{eq_542_}), for $m \geq m_0$,
$$ \mu_m^{}( \cN_{\eps_0}(\cF_0) ) \leq \mu_m^{}( \cF_1^{(m)} ) = 2 \cdot \perimeter_m(P_1^{(m)}) \xrightarrow{m \to \infty} 2 \cdot \perimeter(P_1). $$
Fix $0 < \eps < \eps_0$. By Lemma \ref{lem_430},  there exists $m_1 \geq m_0$ such that for any $m \geq m_1$, the  Hausdorff distance between $\partial K_m$ and $\partial K$ is at most $\eps$. From (\ref{eq_2350}) we obtain that for $m \geq m_1$,
$$
\cF_0^{(m)} \subseteq \cN_{\eps}(\cF_1).
$$
Hence,
$$ \mu_m^{}( \cN_{\eps}(\cF_1) ) \geq \mu_m^{}( \cF_0^{(m)} ) = 2 \cdot \perimeter_m(P_0^{(m)}) \xrightarrow{m \to \infty} 2 \cdot \perimeter(P_0). $$
This completes the proof of (\ref{eq_1204}) and (\ref{eq_1205}).
\end{proof}

\begin{proof}[Proof of Proposition \ref{prop_1110}]
By passing to a subsequence, we may assume that $\area_m(D)$ converges
to an element of $\RR \cup \{ + \infty \}$ as $m \to \infty$, and our goal is to prove that this limit equals $\pi = \area(D)$.

\medskip
The total mass of the measures $\mu_m^{}$ is uniformly bounded, by (\ref{eq_1017}). Lemma \ref{lem_430} implies
that the support of $\mu_m^{}$, which is contained in $\partial K_m \times \partial K_m$, is uniformly bounded in $\RR^2$. We may thus pass to a subsequence and assume
that
\begin{equation}  \mu_m^{} \xrightarrow{m \to \infty} \tilde{\mu} \label{eq_957_} \end{equation}
weakly for some measure $\tilde{\mu}$. This means that for any continuous test function $\vphi$ on $\RR^2 \times \RR^2$
we have $\int \vphi d \mu_m^{} \longrightarrow \int \vphi d \tilde{\mu}$.
The measure $\tilde{\mu}$ is supported on $\partial K \times \partial K$, by Lemma \ref{lem_430}.

\medskip Recall that $\overline{D(0,2)} \subseteq K$. We claim  that for any convex polygon $P \subseteq \RR^2$ with $P \subseteq D(0,2)$,
\begin{equation}
\tilde{\mu}(\cF(P)) = \mu(\cF(P)).
\label{eq_952} \end{equation}
Since $\mu(\cF(P)) = 2 \cdot \perimeter(P)$ is continuous in $P$ and monotone in  $P$ under inclusion, and since $\cF(P') \subseteq \cF(P)$ when $P' \subseteq P$,
in order to prove (\ref{eq_952}) it suffices to prove the following: For any two convex polygons $P_0, P_1 \subseteq \RR^2$ with $ \overline{P_0} \subseteq P_1$ and $\overline{P_1} \subseteq K$,
\begin{equation}  \tilde{\mu}(\cF(P_0)) \leq \mu(\cF(P_1)) \qquad \text{and} \qquad \mu(\cF(P_0)) \leq \tilde{\mu}(\overline{\cF(P_1)}). \label{eq_1004}
\end{equation}
From Lemma \ref{lem_1004} and (\ref{eq_957_}), there exists $\eps_0 > 0$ with
$$ \tilde{\mu}(\cF(P_0)) \leq \limsup_{m \rightarrow \infty}  \mu_m(\cN_{\eps_0}(\cF(P_0)))
\leq \mu(\cF(P_1)) $$
and for any $0 < \eps < \eps_0$,
$$ \tilde{\mu}(\cN_{2 \eps}(\cF(P_1))) \geq \liminf_{m \rightarrow \infty} \mu_m( \cN_{\eps}(\cF(P_1)) ) \geq \mu(\cF(P_0)).
$$
By letting $\eps$ tend to zero, we obtain (\ref{eq_1004}), and hence (\ref{eq_952}) is proven.
The map $T_K^{-1}$ is a well-defined map from $A = \{ (x,y) \in \partial K \times \partial K \, ; \, x \neq y \}$ to $\cG$.
By (\ref{eq_952}), the push-forward of $\tilde{\mu}|_A$ under the map $T_K^{-1}$ is a measure $\tilde{\sigma}$ on $\cG$
which satisfies the assumptions of Lemma \ref{lem_954}. From the conclusion of Lemma \ref{lem_954},
\begin{equation}  \frac{1}{2 \pi} \int_{\partial K \times \partial K} \length((x,y) \cap D ) d \tilde{\mu}(x,y) = \frac{1}{2 \pi} \int_{A} \length((x,y) \cap D ) d \tilde{\mu}(x,y) = \pi. \label{eq_955_} \end{equation}
By the Santal\'o formula,
$$ \area_m(D_m^{\pm}) =
\frac{1}{2\pi} \int_{\RR^2 \times \RR^2} \length_m( (x,y)_m \cap D_m^{\pm} ) d \mu_m^{}(x,y). $$
We thus deduce from (\ref{eq_1000_}),  (\ref{eq_957_}) and (\ref{eq_955_}) that
\begin{equation}  \lim_{m \rightarrow \infty} \area_m(D_m^{\pm}) = \frac{1}{2\pi} \int_{\RR^2 \times \RR^2} \length( (x,y) \cap D) d \tilde{\mu}(x,y) = \pi. \label{eq_1207} \end{equation}
However, $D_m^- \subseteq D \subseteq D_m^+$. Hence (\ref{eq_1207}) implies that $\area_m(D) \longrightarrow \pi$.
\end{proof}

\medskip
\noindent Department of Mathematics, Weizmann Institute of Science, Rehovot 76100, Israel. \\
 {\it e-mails:} \verb"matan.eilat@weizmann.ac.il, boaz.klartag@weizmann.ac.il"

\end{document}